\subjclass[2000]{Primary 03C60, 11G25. Secondary 14G10, 14G15.}
\keywords{difference scheme, Galois stratification, Galois formula, Frobenius automorphism, ACFA}
\title{Twisted Galois stratification}
\date{\today}
\author{Ivan Toma{\v s}i{\'c}}
\address{Ivan Toma{\v s}i{\'c}\\
         School of Mathematical Sciences\\
  	Queen Mary University of London\\
         London, E1 4NS\\
        United Kingdom}
\email{i.tomasic@qmul.ac.uk}
\theoremstyle{plain}
\newtheorem{theorem}{Theorem}[section]
\newtheorem{corollary}[theorem]{Corollary}
\newtheorem{proposition}[theorem]{Proposition}
\newtheorem{lemma}[theorem]{Lemma}
\theoremstyle{definition}
\newtheorem{definition}[theorem]{Definition}
\theoremstyle{remark}
\newtheorem{remark}[theorem]{Remark}
\newtheorem{conjecture}[theorem]{Conjecture}
\providecommand{\OO}{\mathcal{O}}
\providecommand{\Z}{\mathbb{Z}}
\providecommand{\A}{\mathfrak{a}}
\providecommand{\p}{\mathfrak{p}}
\providecommand{\q}{\mathfrak{q}}
\providecommand{\cF}{\mathcal{F}}
\providecommand{\cA}{\mathcal{A}}
\providecommand{\cB}{\mathcal{B}}
\providecommand{\cC}{\mathcal{C}}
\providecommand{\kk}{\mathbf{k}}
\providecommand{\spec}{{\rm Spec}}
\providecommand{\F}{\mathbb{F}}
\providecommand{\Af}{\mathbb{A}}
\providecommand{\Gal}{\text{\rm Gal}}
\providecommand{\ar}{\mathop{\rm ar}\nolimits}
\providecommand{\con}{\mathop{\rm con}\nolimits}
\providecommand{\Hom}{\text{\rm Hom}}
\providecommand{\diff}{\text{\it Diff}}
\def\mathrlap{\mathpalette\mathrlapinternal}
\def\mathrlapinternal#1#2{%
           \rlap{$\mathsurround=0pt#1{#2}$}}
\providecommand{\ztild}[1]{\rlap{$\smash{\tilde{\phantom{#1}}}$}\rlap{$\mathring{\phantom{#1}\kern1.1ex}$}#1\kern.1ex}
\providecommand{\lexp}[2]{{\vphantom{#2}}^{#1}{\kern-.1ex#2}}
\providecommand{\acirc}[1]{\phantom{a}\llap{$\scriptstyle#1$}
\kern.01ex\lower.75ex\hbox{$\smash{\mathring{}}$}}
\providecommand{\lzexp}[3]{{\vphantom{#2}}^{\lower0.0ex\hbox{\smash{$\acirc{#1}$}}}\kern-.1ex #2}
\providecommand{\lrexp}[3]{{\vphantom{#2}}^{#1}{\kern-.1ex#2^#3}}
\begin{document}

\begin{abstract} 
We prove a direct image theorem stating that the direct image of a Galois
formula  by a morphism of difference schemes 
is equivalent to a Galois formula over fields with powers of Frobenius. 
As a consequence,
we obtain an \emph{effective} quantifier elimination procedure and a
 precise algebraic-geometric description of definable sets over fields with Frobenii in terms of
twisted Galois formulae associated with finite Galois covers of difference schemes.
\end{abstract}
\maketitle

\tableofcontents

\section{Introduction}

Galois stratification,  originally developed through work of Fried, Haran, Jarden and Sacerdote
(\cite{fried-jarden}, \cite{fried-sacer}, \cite{FHJ}),
provides an explicit arithmetic-geometric description of definable sets over finite fields
in terms of Galois formulae associated to Galois coverings of algebraic varieties. 
When compared to the
earlier work of Ax \cite{Ax}, made more explicit by Kiefe \cite{kiefe}, which showed that every formula in the language of rings
is equivalent to a formula with a single (bounded) existential quantifier, the
fundamental achievement of the Galois stratification was the \emph{effective}
(in fact \emph{primitive recursive}) nature of its quantifier elimination procedure.  
Moreover, the precise description of formulae in terms of Galois covers was
particularly well-suited for applications of geometric and number-theoretic nature, 
for example
in Fried's work on Davenport's problem \cite{fried-davenp}. In our opinion, the most impressive application was in the
work of Denef and Loeser on arithmetic motivic integration in \cite{DL}. They assign a Chow motive
to a Galois formula, thus extending the consideration of algebraic-geometric invariants of algebraic
varieties to arbitrary first-order formulae. 
 
We develop the theory of \emph{twisted Galois stratification} in order to describe first-order
definable sets in the language of \emph{difference rings} over algebraic closures
of finite fields equipped with powers
of the Frobenius automorphism. 
A (normal) \emph{Galois stratification} on a difference scheme $(X,\sigma)$ is a datum
$$
\cA=\langle X, Z_i/X_i,C_i\ |\ i\in I\rangle,
$$
where $X_i$, $i\in I$ is a partition of $X$ into finitely many normal 
locally closed difference
subschemes of $X$, each $(Z_i,\Sigma_i)\to (X_i,\sigma)$ is a Galois
covering with some group $(G_i,\tilde{\Sigma})$ and $C_i$ is a conjugacy
domain in $\Sigma_i$, with all these notions precisely defined in \cite{ive-tch}. 
The \emph{Galois formula} associated with $\cA$
is the realisation subfunctor $\tilde{\cA}$ of $X$ defined by the assignment
$$
\tilde{\cA}(F,\varphi)=\bigcup_{i\in I}\{x\in X_i(F,\varphi):\varphi_x\subseteq C_i\}
\subseteq X(F,\varphi),
$$
where $(F,\varphi)$ is an algebraically closed difference field and the 
conjugacy class
$\varphi_x\subseteq\Sigma$ is the local $\varphi$-substitution at $x$, as defined
in \cite[Section~\ref{sect:galois}]{ive-tch}. 
Our principal result in its algebraic-geometric incarnation is
 the following \emph{direct image theorem}, stating that
a direct image of a Galois formula by a morphism of finite transformal type
is equivalent to a Galois formula over fields with Frobenii. Equivalently,
the class of Galois formulae over fields with Frobenii is closed under taking
direct images by morphisms of finite transformal type
(a precise statement is \ref{eximageisgal}).
\begin{theorem}\label{main-dirim}
Let $f:(X,\sigma)\to(Y,\sigma)$ be a morphism of finite transformal type
(over a suitable base), and
let $\cA$ be a Galois stratification on $X$. We can effectively compute a Galois stratification
$\cB$ on $Y$ such that for all (suitable) $(\bar{\F}_p,\varphi)$ with a high enough
power of Frobenius $\varphi$,
$$
f(\tilde{\cA}(\bar{\F}_p,\varphi))=\tilde{\cB}(\bar{\F}_p,\varphi).
$$ 
\end{theorem}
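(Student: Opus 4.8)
The plan is to transpose the classical Galois stratification procedure of Fried--Haran--Jarden--Sacerdote to the difference setting, with Hrushovski's \emph{twisted Lang--Weil estimate} playing the role that the Lang--Weil bound and the Chebotarev density theorem play over finite fields. Since the assertion is local on $Y$ and the formation of direct images is stable under composition (thresholds for ``high enough'' Frobenius power simply being maximised), I would first reduce $f$ to a projection. As $f$ is of finite transformal type, $\OO_X$ is a finitely $\sigma$-generated $\OO_Y$-algebra, so a choice of $\sigma$-generators realises $X$ as a locally closed difference subscheme of $\Af^n_\sigma\times Y$ through which $f$ factors as a locally closed immersion followed by the projection $p\colon\Af^n_\sigma\times Y\to Y$. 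The direct image along a locally closed immersion $\iota\colon X\hookrightarrow W$ is again a Galois formula: one transports the stratification of $X$ to $W$, the strata of $X$ being locally closed in $W$, and assigns the empty conjugacy domain to the complementary stratum $W\setminus X$. Thus the entire content sits in the projection, and induction on the number of transformal coordinates reduces everything to the single basic case $p\colon\Af^1_\sigma\times Y\to Y$, that is, to eliminating one existential difference variable.

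For this basic case the problem is fibrewise: a point $y\in Y(\bar{\F}_p,\varphi)$ lies in $p(\tilde{\cA}(\bar{\F}_p,\varphi))$ precisely when the fibre $p^{-1}(y)$ contains a point $x$ with $(x,y)\in X_i$ and $\varphi_x\subseteq C_i$ for some $i$. The next step is a ``good stratification'' argument: using the effective difference Galois theory and quantifier elimination of \cite{ive-tch}, I would refine the partition of $Y$ so that over each stratum the covers appearing in $\cA$ restrict to Galois coverings with constant group $(G,\tilde{\Sigma})$ and transformally irreducible fibres, and so that the transformal dimension $d$ of the fibre is constant along the stratum. On each stratum one then separates the two regimes according to whether the generic fibre has $d=0$ or $d>0$.

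The heart of the matter, and the main obstacle, is to convert the fibrewise existential condition ``there is $x$ over $y$ with $\varphi_x\subseteq C$'' into a Galois condition on $y$ alone. Here I would invoke the twisted Lang--Weil estimate: for $\varphi$ a sufficiently high power of Frobenius, a transformally irreducible fibre of positive transformal dimension has a number of $\varphi$-fixed points growing like $q^{d}$, and, in the form incorporating a Galois cover, these points equidistribute among the twisted conjugacy classes of $(G,\tilde{\Sigma})$ compatibly with the local substitution at the base. Consequently the existence of a fibre point with $\varphi_x\subseteq C$ reduces to a purely group-theoretic incidence between $C$ and the twisted conjugacy class determined by the Frobenius datum at $y$, which is itself local on $Y$ and hence expressible through a Galois covering of $Y$. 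The strata with $d=0$ are treated by the same mechanism applied to finite difference covers, where the twisted estimate degenerates to a direct count and existence is read off the Galois group. Assembling these conditions over all strata, all covers, and the finitely many conjugacy domains produces the stratification $\cB$; every step — the embedding, the refinement, the difference-Galois computations, and the combinatorics of twisted conjugacy classes — is effective, which yields the asserted algorithm.

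I expect the decisive difficulty to be the uniformity and the threshold in this twisted Lang--Weil/Chebotarev step. One must control the distribution of Frobenius conjugacy classes of fibre points \emph{uniformly} in $y$ across a stratum, and the error term in the estimate is exactly what forces the restriction to sufficiently high powers of Frobenius appearing in the statement. Maintaining normality and transformal irreducibility of the strata so that the estimate applies, while tracking which covers and conjugacy domains survive the passage to the image, is the most delicate bookkeeping, and it is here that the difference-theoretic argument genuinely departs from the classical one.
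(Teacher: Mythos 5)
Your overall architecture -- stratify, reduce to a fibrewise existential condition, and convert it into a Galois condition on the base via the twisted Lang--Weil/Chebotarev estimate -- is in the right spirit for the strata of positive relative transformal dimension, and it roughly matches what the paper does in its ``geometrically integral fibres'' case (which it isolates by a baby Stein factorisation rather than by your reduction to projections $\Af^1_\sigma\times Y\to Y$; either reduction is workable). You also correctly identify the uniformity of the threshold as a real issue. But there is a genuine gap in your treatment of the $d=0$ strata, and it is precisely where the difference-theoretic difficulty concentrates.

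You dispose of the relative transformal dimension $0$ case by saying it reduces to ``finite difference covers, where the twisted estimate degenerates to a direct count and existence is read off the Galois group.'' In the difference setting this is false as stated: a generically \'etale ($\sigma$-separable, algebraic) morphism of finite transformal type need \emph{not} be finite. The function field extension $\kk(X)/\kk(Y)$ is algebraic and finitely $\sigma$-generated but can have infinite degree -- these are exactly Babbitt's ``pathological extensions'' (e.g.\ extensions generated by a solution of $\sigma(x)=f(x)$ and all its transforms). For such covers the Galois group is an infinite limit of finite groups, the Artin-symbol formalism for finite covers does not directly apply, and the existence of a fibre point with prescribed local $\varphi$-substitution is an infinite conjunction of conditions that no counting argument settles. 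The paper's resolution is Babbitt's decomposition theorem (Theorem \ref{babb-sch}): after localisation, such a cover factors as a \emph{finite} \'etale quasi-Galois cover followed by a tower of \emph{benign} Galois extensions, where the finite layer is handled by the group theory of the short exact sequence of Galois groups (and the splitting Lemma \ref{babsplit}, which is what lets one expand the difference structure to form quasi-Galois closures at all), and the benign layers are handled by an unconditional point-lifting lemma (Lemma \ref{benignlift}) that exploits the product structure of the Galois group of a benign extension and the fact that any two elements are $()^\sigma$-conjugate there. Without Babbitt's decomposition, or some substitute for it, your procedure has no way to eliminate a bounded difference variable whose minimal difference equation generates an infinite-degree algebraic extension, so the induction you propose does not close.
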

A model-theoretic restatement of the above theorem is that fields with Frobenii
allow \emph{quantifier elimination} in the language of Galois formulae. In other words,
any definable set over fields with powers of
Frobenius can be described by a  Galois formula (a precise
statement is \ref{qe-ffrob}).
\begin{theorem}\label{main-QE}
Let $\theta(x_1,\ldots,x_n)$ be a first-order formula in the language of
difference rings (with suitable parameters). 
We can effectively compute a Galois stratification $\cA$ of the 
difference affine $n$-space such that for all (suitable) $(\bar{\F}_p,\varphi)$
with a high enough power of Frobenius $\varphi$, 
$$
\theta(\bar{\F}_p,\varphi)=\tilde{\cA}(\bar{\F}_p,\varphi).
$$
Conversely, every Galois formula is equivalent to a first-order formula in the
language of difference rings over fields with Frobenii.
\end{theorem}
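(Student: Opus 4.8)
The plan is to deduce the forward (quantifier-elimination) assertion from the direct image theorem \ref{main-dirim} and to obtain the converse by unwinding the definition of a Galois formula. For the forward direction I would argue by induction on the structure of the difference-ring formula $\theta(x_1,\ldots,x_n)$ over the suitable base, built from atomic formulae by means of $\neg$, $\wedge$, $\vee$ and $\exists$, with $\forall$ treated as $\neg\exists\neg$. In the base case an atomic formula $p(x_1,\ldots,x_n)=0$, where $p$ is a difference polynomial in the $\sigma^k x_i$, cuts out a closed difference subscheme of the difference affine $n$-space $\Af^n$; refining the partition $\{V(p),\,\Af^n\setminus V(p)\}$ into normal locally closed strata and equipping each stratum with the trivial Galois cover (group $1$), with conjugacy domain the whole group on the closed part and empty on its complement, exhibits the realisation of $p=0$ as a Galois formula. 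For the Boolean connectives one uses that the Galois formulae on a fixed difference scheme are closed under $\neg$, $\wedge$, $\vee$: given two Galois stratifications I would pass to a common normal stratification refining both partitions and, over each common stratum, replace the two covers by a normalisation of their compositum, whose group is the relevant fibre product; the local $\varphi$-substitutions pull back compatibly, so that $\wedge$ and $\vee$ correspond to intersection and union of the pulled-back conjugacy domains and $\neg$ to replacing each $C_i$ by $\Sigma_i\setminus C_i$, the last operation being exact precisely because over a normal stratum $\varphi_x$ is a single well-defined conjugacy class \cite{ive-tch}.

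The essential inductive step is existential quantification. Suppose $\theta=\exists x_{n+1}\,\theta'(x_1,\ldots,x_{n+1})$ and, by the inductive hypothesis, $\theta'$ is equivalent over $(\bar{\F}_p,\varphi)$ with $\varphi$ a high enough power of Frobenius to a Galois formula $\tilde{\cB}$ for some stratification $\cB$ on $\Af^{n+1}$. The realisation of $\theta$ is then exactly the image of $\tilde{\cB}$ under the projection $\pi\colon\Af^{n+1}\to\Af^n$, a morphism of finite transformal type, since the interpretation of $\exists x_{n+1}$ in $(\bar{\F}_p,\varphi)$ is precisely this projection of solution sets. The direct image theorem \ref{main-dirim} then yields, effectively, a Galois stratification $\cA$ on $\Af^n$ with $\pi(\tilde{\cB}(\bar{\F}_p,\varphi))=\tilde{\cA}(\bar{\F}_p,\varphi)$ for $\varphi$ a sufficiently high power of Frobenius, which closes the induction. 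Each application of \ref{main-dirim} may raise the threshold on the power of Frobenius, which accounts for the ``high enough power'' hypothesis in the statement.

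For the converse I would unwind $\tilde{\cA}(\bar{\F}_p,\varphi)=\bigcup_{i}\{x\in X_i(\bar{\F}_p,\varphi):\varphi_x\subseteq C_i\}$ and check that each term is difference-ring definable. Every stratum $X_i$, being a normal locally closed difference subscheme, is the solution set of finitely many difference-polynomial equations and inequations, hence definable. The condition $\varphi_x\subseteq C_i$ is first-order over a field with Frobenius because the Galois cover $Z_i\to X_i$ and its group are given explicitly, so the local substitution $\varphi_x$ is governed by the way the fibre of $Z_i$ over $x$ decomposes under the ambient Frobenius; membership of this class in the prescribed conjugacy domain is expressed by a bounded existential statement about the existence of a point of $Z_i$ above $x$ whose $\varphi$-substitution lies in $C_i$. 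A finite union of such definable sets is definable, producing the required first-order formula.

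Modulo the already-granted direct image theorem, the main obstacle is the bookkeeping in the Boolean step: one must verify that passing to a normalisation of the compositum of Galois covers over a common normal refinement behaves correctly with respect to the local $\varphi$-substitutions, so that intersection, union and especially complementation of conjugacy domains compute the intended set-theoretic operations. A secondary point requiring care is tracking the uniform ``high enough power of Frobenius'' bound through the induction, since it is this uniformity across $(\bar{\F}_p,\varphi)$, rather than the equivalence over any single field, that gives the statement its content.
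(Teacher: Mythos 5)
Your proposal follows essentially the same route as the paper: induction on the structure of the formula, with atomic formulae handled by trivial covers on a normal stratification, Boolean connectives handled by passing to a common refinement and a common (inflated) cover and operating on conjugacy domains, the existential step delegated to the direct image theorem \ref{eximageisgal}, the universal quantifier treated as $\lnot\exists\lnot$ via \ref{unimageisgal}, and the converse obtained by expressing $\ar(x)\subseteq C_i$ as a bounded existential statement about a point of $Z_i^{\sigma_i}$ above $x$ (the paper additionally notes that one should embed $Z_i$ in affine space with the group acting by difference rational maps to make this literally first-order). This matches the paper's proof of \ref{qe-ffrob} in both structure and substance.
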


Historically speaking, the comparison of our result to the known 
model-theoretic quantifier
elimination result found by Macintyre \cite{angus} and greatly refined in modern
terms by
Chatzidakis and Hrushovski \cite{zoe-udi}, is
parallel to the relation between the work of Fried-Sacerdote and the work of Ax
mentioned above. Macintyre and Chatzidakis-Hrushovski show that any formula 
$\theta(x_1,\ldots,x_n)$ in the
language of difference rings is equivalent, modulo the theory ACFA of 
existentially closed difference fields, to a Boolean combination of formulae
of the form $$\exists y\,\,\psi(y;x_1,\ldots,x_n),$$ where $\psi$ is quantifier free,
and $\psi(y;x_1,\ldots,x_n)$ implies that $y$ satisfies a nonzero polynomial
whose coefficients are $\sigma$-polynomials in $x_1,\ldots,x_n$, i.e., the
single existential quantifier is bounded. 
Their proof uses the compactness theorem and, although recursive, 
their quantifier elimination is far from being primitive recursive or effective
in a suitable sense of the word. 

The main achievement of our paper is
the \emph{effectivity} of our quantifier elimination procedure, the proof of the
direct image theorem being fundamentally algorithmic and algebraic-geometric
in nature. We show that our quantifier elimination and the decision procedure for
fields with Frobenii are \emph{$\dag$-primitive recursive,} i.e., 
primitive-recursive reducible to basic operations in difference algebraic geometry,
as detailed in Section~\ref{s:effective}. Although primitive-recursive algorithms
are not known at the moment for some of the most elementary constructions in
difference algebra, we strongly believe that in the near future our procedures
will be shown to be primitive recursive.
Needless to say, while it may be possible to start with the model-theoretic quantifier elimination and deduce the
precise form of \ref{main-QE} (in fact \ref{stratACFA}),  taking this route would be
 missing the point.

The statement \ref{main-QE}  is over fields with
Frobenii and that is why we must refer to the present author's Chebotarev Lemma
\cite[\ref{twisted-cebotarev}]{ive-tch} which uses the difficult paper \cite{udi} on twisted
Lang-Weil estimates, proving the earlier conjecture of \cite{angus} that
ACFA is the elementary theory of fields with Frobenii. This is the only use
of the main result of \cite{udi} in this paper, and the remaining references to \cite{udi}
are mostly foundational lemmas.
However, our Galois stratification procedure 
works over existentially closed difference fields unconditionally, 
\emph{without} the use of \cite{udi}, see \ref{stratACFA}.

One of the biggest challenges was 
the correct formulation of the result and even a suitable definition of a 
\emph{Galois cover}, which already requires the full power of the
theory of \emph{generalised difference schemes} developed in \cite{ive-tch},
since the  category of \emph{strict} difference schemes has no 
reasonable Galois actions, coverings or quotients. One clear advantage
of the description of the definable sets in terms of (twisted) Galois stratifications
is our ability to reduce considerations regarding points on definable sets to 
calculations of various character sums, as
expounded in \cite{ive-tch}. Since the style of our proof is reminiscent of many a
direct image theorem from algebraic geometry, our results should appeal
to algebraic geometers and number theorists and we expect more diophantine
applications to follow.

Our approach to the proof of \ref{main-dirim} (in fact of \ref{eximageisgal}) 
is  more geometric and
conceptual than those of \cite{fried-jarden}, \cite{fried-sacer}, \cite{FHJ} in 
the classical case.
The proof from \cite{nicaise} in the algebraic case uses the theory of the
\'etale fundamental group in a rather sophisticated
way,  which is not available in the difference scenario.
However, by performing 
a `baby' Stein factorisation at the start of our procedure,
the only remnant of that theory is
 the short exact sequence for the \'etale fundamental group, in which
 case we can `manually' keep track of what happens at the level of
 finite Galois covers. From this point of view, even if we were to
 eliminate all the difference language, our line of proof  would
 still yield an essentially new proof in the classical case.
Here, on the other hand, we must treat several genuinely new difference phenomena which do not arise in the algebraic case. Key ingredients include
Babbitt's decomposition theorem \ref{babb-sch} and our
Chebotarev lemma \cite[\ref{twisted-cebotarev}]{ive-tch}. 

Although the foundation of the theory of generalised difference schemes has been
laid in \cite{ive-tch},
 for the purposes
of this paper we must develop the
framework even further in Section~\ref{sect:bi-fib}.

In the course of the proof, we use local properties of difference schemes previously unknown
 in difference algebraic geometry, developed in Section~\ref{sec:localstudy}.
 It must be emphasised that our theory is almost orthogonal to the various
 notions of smoothness that appear in G.~Giabicani's thesis, see \ref{remgiab}.

En route to the main theorem, we encounter another merit of working in the context of
generalised difference schemes, a difference version of Chevalley's theorem
\ref{chevalley}, 
which gives a sufficient condition for the 
image of a morphism of difference schemes
 of finite transformal type to contain a dense open set. Wibmer 
 gives a similar result by generalising difference algebra in a slightly different 
 direction \cite{wibmer}.

 The author would like to thank Michael Fried, Angus Macintyre and
 Thomas Scanlon for fruitful discussions on the topic of this paper, and to
 Zoe Chatzidakis for pointing out the importance of Babbitt's decomposition
 to him many years ago.

\section{Local study of difference schemes and their morphisms}\label{sec:localstudy}

The foundation of the theory of  generalised
difference schemes started has been laid in \cite{ive-tch}. The familiarity
with this work is crucial and we freely use the concepts defined there, 
although the reader acquainted with \cite{udi}
can follow the subsequent developments that refer to ordinary difference schemes.

\subsection{Difference schemes vs.\ pro-algebraic varieties}\label{s:diff-pro}

One of the most important ideas in the study of difference algebraic geometry
was the realisation that there is a translation mechanism between the language
of difference schemes and that of algebraic correspondences, or, more generally,
systems of prolongations associated with a difference scheme.
 
We would like to be able to reduce the study of certain local properties of difference schemes
to the study of known properties of algebraic schemes through systems of prolongations. 
In order to achieve this goal, we must be able to
speak about \emph{difference subvarieties} of ordinary algebraic
varieties, which is achieved by defining a 
\emph{difference scheme associated to a scheme}. 
 
\begin{proposition}[\cite{udi}]
Let $(R,\sigma)$ be a difference ring. The forgetful functor
from the category of difference $(R,\sigma)$-schemes to the category of locally $R$-ringed spaces
has a right adjoint $[\sigma]_R$, i.e., for every $R$-scheme $X$ we have a morphism
$[\sigma]_RX\to X$ inducing the functorial isomorphism
$$
\Hom_R(Z,X)=\Hom_{(R,\sigma)}(Z,[\sigma]_RX),
$$
for every $(R,\sigma)$-difference scheme $(Z,\sigma)$, where the morphisms on the left
are the morphisms of locally $R$-ringed spaces.   
\end{proposition}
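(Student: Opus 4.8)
The plan is to exhibit $[\sigma]_RX$ explicitly as the \emph{cofree} difference scheme on $X$, realised as an infinite fibre product of Frobenius-type twists equipped with a shift endomorphism. For each $n\ge 0$ I write $\lexp{\sigma^n}{X}=X\times_{\spec R,\,\spec\sigma^n}\spec R$ for the base change of $X$ along $\spec\sigma^n:\spec R\to\spec R$. The point of the twist is that, for an $(R,\sigma)$-scheme $(Z,\sigma_Z)$ with structure map $s:Z\to\spec R$, an $R$-morphism $Z\to\lexp{\sigma^n}{X}$ is by the universal property of the fibre product the same datum as a morphism $h:Z\to X$ of locally $R$-ringed spaces lying over $\spec\sigma^n\circ s$. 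Since $\sigma_Z$ lies over $\spec\sigma$, for any $R$-morphism $f:Z\to X$ the composite $f\circ\sigma_Z^n$ lies over precisely $\spec\sigma^n\circ s$, hence is classified by a canonical $R$-morphism $g_n:Z\to\lexp{\sigma^n}{X}$. This single observation is the engine of the whole adjunction.

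First I would treat the affine case $X=\spec A$ over $\spec R$, at the level of algebras. Here I set $FA=\bigotimes_{n\ge 0}\lexp{\sigma^n}{A}$ (tensor product over $R$), with $\lexp{\sigma^n}{A}=A\otimes_{R,\sigma^n}R$, and I write $a^{(n)}$ for the image of $a\in A$ in the $n$-th factor. The assignment $a^{(n)}\mapsto a^{(n+1)}$ and $r\mapsto\sigma(r)$ on $R$ defines an $R$-algebra endomorphism of $FA$, namely the shift, giving $FA$ a difference structure. The crucial verification is that a difference $R$-algebra homomorphism $\phi:FA\to B$ into a difference $R$-algebra $B$ is freely determined by its restriction to the zeroth factor: indeed $\phi(a^{(n)})=\sigma_B^n(\phi(a^{(0)}))$ is forced by commutation with the shift, while an arbitrary $R$-algebra homomorphism $A\to B$ extends to such a $\phi$ because the defining relations of the tensor product are respected by $\sigma_B$-twisting. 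Dualising gives the adjunction bijection with $[\sigma]_R\spec A=\spec FA$ (carrying the shift endomorphism), and the morphism $[\sigma]_R\spec A\to\spec A$ is the projection onto the zeroth factor.

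To globalise, I would observe that the construction is natural in $X$ and local on $X$, so the affine pieces glue: concretely $[\sigma]_RX=\varprojlim_N\big(\prod_{n=0}^{N}\lexp{\sigma^n}{X}\big)$, a limit of finite fibre products over $\spec R$ with transition maps the projections dropping the top factor, endowed with the shift endomorphism obtained by reindexing (which lies over $\spec\sigma$) and with $[\sigma]_RX\to X$ the projection to $n=0$. The adjunction then reads off directly from the affine computation: an $R$-morphism $f:Z\to X$ produces the compatible family $(g_n)_{n\ge 0}$ with $g_n$ classifying $f\circ\sigma_Z^n$, the relation $g_{n+1}$ versus $g_n$ under $\sigma_Z$ making the assembled $Z\to[\sigma]_RX$ a morphism of difference schemes, while the inverse map is composition with the projection $[\sigma]_RX\to X$. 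Naturality of all the maps involved makes this a bijection natural in both $Z$ and $X$.

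The hard part, and the reason this statement lives in the theory of generalised difference schemes rather than ordinary schemes, is the existence of the inverse limit $\varprojlim_N\prod_{n\le N}\lexp{\sigma^n}{X}$ as an honest object of the target category. Each transition map is a base change of the structure morphism $X\to\spec R$, hence affine only when $X$ is affine over $\spec R$; for general $X$ the naive limit is not a scheme, and one must interpret it inside the pro-algebraic/difference-scheme framework of this section, verifying that the shift descends to a genuine difference structure on the limit and that the forgetful functor interacts correctly with such limits. Granting that framework, the affine-local verification above assembles to the stated natural isomorphism, proving that $[\sigma]_R$ is right adjoint to the forgetful functor.
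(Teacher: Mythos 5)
Your construction is correct and is essentially the one the paper intends: the proposition is quoted from \cite{udi} without proof, and the cofree object $\bigotimes_{n\ge 0}A\otimes_{R,\sigma^n}R$ with the shift endomorphism (globalised over an affine cover) is exactly the construction underlying the paper's immediately following discussion of $X[n]$, $X[\infty]=\prod_{n\ge 0}X^{\sigma^n}$ and the shift isomorphism $\sigma:\prod_{n\ge 1}X^{\sigma^n}\to X[\infty]$. The one caveat is that $[\sigma]_R\spec A$ is the difference spectrum $\spec^\sigma(FA)$ (whose points are the $\sigma$-primes) rather than $\spec(FA)$ or the literal pro-scheme $\varprojlim_N\prod_{n\le N}\lexp{\sigma^n}{X}$ --- the latter is only the associated pro-algebraic variety --- so the Hom-set identifications must be run through the global-sections adjunction for $\spec^\sigma$ from the generalised difference scheme framework you correctly flag as the necessary foundation.
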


Suppose now that $X$ is an $R$-scheme and $(Z,\sigma)$ is a closed 
$(R,\sigma)$-difference subscheme of $[\sigma]_RX$. For ease of notation, let us write 
$S=\spec(R)$. Writing $X^{\sigma^n}$ for
$X\times_SS$ where the morphism $S\to S$ is $\sigma^n$, it is clear that the $\sigma^n$-linear morphism 
$\sigma^n:[\sigma]_RX\to[\sigma]_RX$ defines an $R$-morphism $[\sigma]_RX\to X^{\sigma^n}$ 
and thus we deduce a morphism 
$$Z\hookrightarrow [\sigma]_RX\to X\times X^\sigma\times\cdots\times X^{\sigma^n}=:X[n].$$ We denote the scheme-theoretic image of this map by $Z[n]$, obtaining a closed $R$-subscheme
$Z[n]\hookrightarrow X[n]$ for every $n$. Although the projective limit $Z[\infty]$ of the $Z[n]$
can be viewed as a scheme, we will find it most illuminating to view it as a pro-(scheme of finite type).

The maps $X^{\sigma^{n+1}}\to X^{\sigma^{n}}$ induce the maps $\sigma:X[n+1]\to X[n]$
and thus $X[\infty]$ is equipped with an endomorphism induced by $\sigma$. 
In particular, this defines an isomorphism %
$\sigma:\prod_{n\geq 1}X^{\sigma^n}\to\prod_{n\geq 0}X^{\sigma^n}=X[\infty]$ 
We also have the projection $r:\prod_{n\geq 0}X^{\sigma^n}\to\prod_{n\geq 1}X^{\sigma^n}$.
\begin{proposition}[\cite{udi}]
An $R$-subscheme $Y$ of $X[\infty]$ is of the form $Z[\infty]$ for some 
difference subscheme $(Z,\sigma)$ of $[\sigma]_RX$ if and only if
$Y$ contains $rY^{\sigma}%
$.   %
\end{proposition}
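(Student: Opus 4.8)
The plan is to recognise the stated condition as the geometric expression of the fact that the defining ideal of $Y$ is a \emph{difference} ideal, and then to run the correspondence $Z\mapsto Z[\infty]$ in both directions. Since the question is local on $X$, I would first reduce to the affine case $X=\spec B$, so that $X[\infty]$ becomes the spectrum of the pro-finite-type ring $\OO=\varinjlim_n\OO_{X[n]}$ (the transition maps $X[n+1]\to X[n]$ being the projections that drop the last factor, so that $\OO_{X[n]}\hookrightarrow\OO_{X[n+1]}$), and the closed $R$-subscheme $Y$ is given by an ideal $I\subseteq\OO$ with truncations $I_n:=I\cap\OO_{X[n]}$. The endomorphism induced by $\sigma$ on $X[\infty]$, which one checks equals $\Sigma=\sigma\circ r$, acts on $\OO$ as the shift underlying the difference structure; under the adjunction of the first Proposition this identifies $(\OO,\Sigma)$ with the difference coordinate ring of $[\sigma]_RX$. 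The whole point is then the dictionary: the containment $Y\supseteq rY^\sigma$ is by construction $\Sigma(Y)\subseteq Y$, and, since $\Sigma$ acts on $\OO$ as that shift, this holds if and only if $I$ is stable under it, i.e.\ $I$ is a difference ideal. I would record this translation carefully, since the direction of the inclusion is exactly where the projection $r$ and the structural isomorphism $\sigma$ must be kept straight.

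For the forward implication, suppose $Y=Z[\infty]$ with $Z$ a closed difference subscheme, so that $Z$ is stable under the endomorphism $\sigma$ of $[\sigma]_RX$. I would exploit the factorisation of the transition map: the composite $Z\hookrightarrow[\sigma]_RX\to X[n+1]\xrightarrow{\sigma}X[n]$ agrees with $Z\xrightarrow{\sigma}[\sigma]_RX\to X[n]$, and $\sigma(Z)\subseteq Z$ forces the scheme-theoretic image of the latter to land inside $Z[n]$. By functoriality of the scheme-theoretic image this gives $\sigma(Z[n+1])\subseteq Z[n]$ at every finite level, and assembling these over $n$ yields $\Sigma(Z[\infty])\subseteq Z[\infty]$, which is the asserted containment $Y\supseteq rY^\sigma$.

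For the converse I would simply produce the candidate $Z$ directly. Given $\Sigma(Y)\subseteq Y$, the translation above shows $I=I(Y)$ is a difference ideal; let $Z\hookrightarrow[\sigma]_RX$ be the closed difference subscheme it defines. It then remains to check $Z[\infty]=Y$, and here the affine reduction pays off: the scheme-theoretic image of $Z\hookrightarrow X[\infty]\to X[n]$ is cut out by $\ker(\OO_{X[n]}\to\OO/I)=I\cap\OO_{X[n]}=I_n$, so $Z[n]=V(I_n)=Y_n$ for every $n$, and passing to the limit gives $Z[\infty]=V(I)=Y$. I expect the main obstacle to be precisely this last commutation with the inverse limit: one must ensure that the level-wise scheme-theoretic images really are the elimination ideals $I_n$ and that these reassemble to $I$ rather than acquiring extra relations in the pro-system, and that the $\sigma$-stability of $I$ transfers cleanly between $\OO$ and its finite-level pieces. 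Keeping the bookkeeping of $r$, $\sigma$ and the twist ${}^\sigma$ consistent throughout is the delicate, if elementary, heart of the argument.
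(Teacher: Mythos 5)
The paper offers no proof of this proposition: it is quoted from \cite{udi} as part of the foundational setup, so there is nothing in the text to compare your argument against line by line. That said, your reconstruction is the natural one and is essentially correct. The reading $Y\supseteq rY^{\sigma}\iff\Sigma(Y)\subseteq Y$ with $\Sigma=\sigma\circ r$ is the intended one; on $\OO=\varinjlim_n\OO_{X[n]}=[\sigma]_RB$ the endomorphism $\Sigma^{\#}$ is exactly the difference operator, so $Y=V(I)$ satisfies $Y\subseteq\Sigma^{-1}(Y)$ precisely when $I$ is a $\sigma$-ideal; and since every element of $I$ lies in some $\OO_{X[n]}$, the truncations $I_n=I\cap\OO_{X[n]}$ reassemble to $I$ with no extra relations, giving $Z[\infty]=V(I)=Y$ in the converse direction. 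The one step you flag as delicate is indeed the only place where a convention must be pinned down: the identification $Z[n]=V\bigl(I\cap\OO_{X[n]}\bigr)$ takes the ``scheme-theoretic image'' of $Z\to X[n]$ to be cut out by the kernel of $\OO_{X[n]}\to\OO/I$, whereas the underlying space of $Z=\spec^{\sigma}(\OO/I)$ consists only of $\sigma$-primes, so that $\OO/I\to\Gamma(Z,\OO_Z)$ need not be injective for an arbitrary difference ideal (it is, e.g., in the well-mixed setting of \cite{ive-tch}). With the ideal-theoretic reading used in \cite{udi} your computation closes the loop, but you should state that convention explicitly rather than leave it implicit in the phrase ``elimination ideals.''
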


We say that $Z$ is \emph{weakly Zariski dense} in $X$ if $Z[0]=X$. Note that it can happen
that $Z$ is weakly Zariski dense in $X$ but the set of points of $Z$ is not Zariski dense in  $X$.

Let us now start with a difference scheme $(X,\sigma)$ of finite $\sigma$-type 
over $(R,\sigma)$ and build a system of `prolongations' of $X$ in which $X$ is
weakly Zariski dense by construction. We shall describe the procedure for an affine 
difference scheme $(X,\sigma)=\spec^\sigma(A)$, where $A=R[a]_\sigma$ is an 
$(R,\sigma)$-algebra of finite $\sigma$-type, generated by a tuple $a\in A$.

If we write $A_n:=R[a,\sigma a,\ldots,\sigma^n a]$, we have inclusions $A_n\hookrightarrow A_{n+1}$
and maps $\sigma_n:A_n\to A_{n+1}$ induced by $\sigma$, so that $(A,\sigma)$ is the direct limit of
the $A_n$ and the $\sigma_n$. We obtain the following diagram for $X_n=\spec(A_n)$.
$$
 \begin{tikzpicture}
[cross line/.style={preaction={draw=white, -,
line width=4pt}}]
\matrix(m)[matrix of math nodes, row sep=.8cm, column sep=.8cm, text height=1.5ex, text depth=0.25ex]
{|(0m1)|{S}& |(00)|{X_0} & |(01)|{X_1} & |(02)|{X_2}& |(03)|{\cdots}& |(04)|{}\\
|(1m1)|{S}&|(10)|{X_0^\sigma}&|(11)|{X_1^\sigma}&|(12)|{X_2^\sigma}&|(13)|{\cdots}&|(14)|{}\\
         &|(20)|{X_0} & |(21)|{X_1}&|(22)|{X_2}&|(23)|{X_3}&|(24)|{\cdots}\\};%
\path[->,font=\scriptsize,>=to, thin]
(1m1) edge node[right]{$\sigma$} (0m1)
(10) edge (00)(11) edge (01)(12) edge (02) %
(00) edge (0m1) (01) edge (00) (02) edge (01) (03) edge (02) %
(10)edge(1m1)(11)edge(10)(12)edge(11)(13)edge(12)%
                               (21)edge(20)(22)edge(21)(23)edge(22)(24)edge(23)
 (20)edge(1m1) (21)edge(10) (22)edge(11) (23)edge(12) %
(21)edge[cross line] node[pos=0.80,right]{$\sigma_0$}(00)%
(22)edge[cross line] node[pos=0.80,right]{$\sigma_1$}(01)%
(23)edge[cross line]node[pos=0.80,right]{$\sigma_2$}(02)  %
;
\end{tikzpicture}
$$ 

By construction, we have closed immersions $X_1\hookrightarrow X_0\times_SX_0^\sigma$
and $X_{n+1}\hookrightarrow X_n\times_{X_{n-1}^\sigma}X_n^\sigma$ for $n\geq 1$, and
we conclude that we have written $(X,\sigma)$ as a weakly Zariski dense difference 
subscheme of $X_0$.

\begin{lemma}[Preparation Lemma]\label{preprl}
With above notation, if $A$ and $R$ are algebraically integral and 
all $A_n\to A_{n+1}$ are separable, by $\sigma$-localising
$A$ and $R$ we can arrange that morphisms 
$$X_{n+1}\rightarrow X_n\times_{X_{n-1}^\sigma}X_n^\sigma
\to X_n\times_{X_{n-1}}X_n$$
 are isomorphisms for $n\geq 1$ and that $X$ is a Zariski dense
difference subscheme of $X_0$.
\end{lemma}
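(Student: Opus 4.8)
The plan is to reduce the statement to the behaviour at the generic points, to isolate a pair of monotone numerical invariants that must stabilise, and then to spread the resulting good generic behaviour out to an open difference subscheme by a single $\sigma$-localisation. Since $A$ and $R$ are algebraically integral we may pass to fraction fields; write $K_n=\mathrm{Frac}(A_n)$, so that $K_n=\mathrm{Frac}(R)(a,\sigma a,\ldots,\sigma^n a)$ and $\sigma$ restricts to an isomorphism of $K_n$ onto $\sigma K_n=\mathrm{Frac}(R)(\sigma a,\ldots,\sigma^{n+1}a)\subseteq K_{n+1}$ (having first $\sigma$-localised $R$ so as to ignore the distinction between $R$ and $\sigma R$). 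The closed immersion $X_{n+1}\hookrightarrow X_n\times_{X_{n-1}^\sigma}X_n^\sigma$ corresponds to a surjection of $R$-algebras $A_n\otimes_{A_{n-1}^\sigma}A_n^\sigma\twoheadrightarrow A_{n+1}$, which at the generic point of $X_{n+1}$ becomes the multiplication map $K_n\otimes_{\sigma K_{n-1}}\sigma K_n\to K_n\cdot\sigma K_n=K_{n+1}$; the lemma amounts to upgrading this surjection to an isomorphism of schemes for every $n\ge1$.

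I would first observe that the two invariants $t_n:=\trdeg(K_{n+1}/K_n)$ and the separable degree $d_n$ of the algebraic part of $K_{n+1}/K_n$ are both non-increasing in $n$. Indeed, since $\sigma K_{n-1}\subseteq K_n\cap\sigma K_n$ and $\sigma$ is a field isomorphism carrying $K_{n-1}$ to $\sigma K_{n-1}$, the compositum estimate gives $t_n\le\trdeg(\sigma K_n/\sigma K_{n-1})=t_{n-1}$ and, once the transcendence degree has stabilised, likewise $d_n\le d_{n-1}$. Being non-increasing sequences of non-negative integers, both stabilise: there is an $N$ such that for all $n\ge N$ the invariant $t_n$ attains the transformal dimension $\sd$ and $d_n$ attains the limit degree $\dl$. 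The point of stabilisation is that equality forces disjointness: for $n\ge N$ the equalities $t_n=t_{n-1}$ and $d_n=d_{n-1}$ say precisely that $K_n$ and $\sigma K_n$ are linearly disjoint, and algebraically disjoint in the transcendental directions, over $\sigma K_{n-1}$.

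The heart of the matter is to convert this into a genuine isomorphism, and here the separability hypothesis is decisive: because $A_n\to A_{n+1}$ is separable, the extension $\sigma K_n/\sigma K_{n-1}$ is separably generated, so $K_n\otimes_{\sigma K_{n-1}}\sigma K_n$ is reduced (\'etale in the algebraic directions, a localised polynomial ring in the transcendental ones); combined with the disjointness coming from stabilisation, it is in fact a domain with fraction field $K_{n+1}$, so the multiplication map is a generic isomorphism for $n\ge N$. The second comparison map $X_n\times_{X_{n-1}^\sigma}X_n^\sigma\to X_n\times_{X_{n-1}}X_n$ is then handled in the same breath: after $\sigma$-localising so that the projection $X_n\to X_{n-1}$ is smooth, its $\sigma$-transform $X_n^\sigma\to X_{n-1}^\sigma$ is identified, through the semilinear isomorphism induced by $\sigma$, with $X_n\to X_{n-1}$, and the two fibre products are thereby matched.

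It remains to obtain the conclusion for \emph{all} $n\ge1$ and to pass from generic to genuine isomorphisms. For the index range I would re-choose generators, replacing the tuple $a$ by $b=(a,\sigma a,\ldots,\sigma^{N}a)$: this leaves $(X,\sigma)=\specd(R[a]_\sigma)=\specd(R[b]_\sigma)$ unchanged but shifts the tower so that $B_n=A_{n+N}$ and every invariant relevant to level $n\ge1$ already sits at its stable value. The main obstacle, and the final step, is that a \emph{single} $\sigma$-localisation of $A$ and $R$ must simultaneously promote the infinitely many generic isomorphisms, indexed by $n$, to isomorphisms of schemes, all the while remaining $\sigma$-invariant so as to stay within the category of difference schemes. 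This is possible because of the recursive structure $X_{n+1}\cong X_n\times_{X_{n-1}^\sigma}X_n^\sigma$: it suffices to $\sigma$-localise so that the single map at the bottom of the reindexed tower becomes an isomorphism onto an open locus whose formation commutes with the base changes along $\sigma$, after which flat base change and induction on $n$ force every higher map to be an isomorphism; the same localisation guarantees that the generic point of $X$ dominates $X_0$ and that $X$ acquires a Zariski dense set of (prolongable) points, upgrading weak Zariski density to honest Zariski density. Keeping the localisation $\sigma$-stable while controlling the entire infinite tower at once is the delicate part of the argument.
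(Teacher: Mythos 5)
Your overall architecture coincides with the paper's: the stabilisation of transcendence degree and (separable) degree that you extract from monotone invariants is exactly what the paper imports from Levin (5.2.10--5.2.12, the limit-degree machinery, packaged there as a choice of generators $a=bc$ with $b_n$ transcendental and $[L_n:L_{n-1}(b_n)]$ constant); both arguments then convert the stabilised equalities into $\sigma$-linear disjointness of $L_n$ with itself over $L_{n-1}$, localise once at the bottom of the tower, and propagate up by induction. Your re-indexing trick $b=(a,\sigma a,\ldots,\sigma^N a)$ is a legitimate substitute for Levin's generator change. Up to that point the proposal is sound.

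The genuine gap is in the step you yourself flag as delicate, and it is not closed by what you write. The map $A_n\otimes_{A_{n-1}}A_n\to A_{n+1}$ is surjective by construction, so the issue is injectivity; linear disjointness gives injectivity only after tensoring with the fraction field $L_{n-1}$, i.e.\ it gives a \emph{generic} isomorphism. A closed immersion that is a generic isomorphism does not become an isomorphism after a $\sigma$-localisation of $A$ and $R$ in general, because the kernel of $A_n\otimes_{A_{n-1}}A_n\twoheadrightarrow A_{n+1}$ lives in the $A_{n-1}$-torsion of the tensor product, and that torsion need not be supported on a locus you can remove by inverting a single element of $A_0$ propagated through the tower. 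The paper's mechanism, which is absent from your proposal, is generic freeness (EGA IV, 6.9.2): one localises so that $A_1$ is a \emph{free} $A_0$-module via both $\pi_{10}$ and $\sigma_0$; then $A_1\otimes_{A_0}A_1$ is free, hence torsion-free, hence embeds in $L_0\otimes_{A_0}(A_1\otimes_{A_0}A_1)$, where linear disjointness applies and forces injectivity onto $L_0[A_2]$; surjectivity then gives the isomorphism, and freeness of $A_1\to A_2$ is inherited by base change, which is what lets a single localisation control the entire infinite tower by induction. Your appeal to ``flat base change and induction'' gestures at the right tools but omits the torsion-freeness argument that is the actual bridge from generic injectivity to injectivity; without it the final step does not go through. (The same remark applies to your treatment of the comparison $X_n\times_{X_{n-1}^\sigma}X_n^\sigma\to X_n\times_{X_{n-1}}X_n$, which the paper handles by showing the composite $A_n\otimes_{A_{n-1}}A_n\to A_n\otimes_{A_{n-1}^\sigma}A_n^\sigma\to A_{n+1}$ of surjections is injective, rather than by identifying the two fibre products via smoothness of $X_n\to X_{n-1}$.)
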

\begin{proof}
Let $K$ be the fraction field of $R$.
By combining the statements 5.2.10, 5.2.11, 5.2.12 from \cite{levin}, modulo
a $\sigma$-localisation of $R$, we can
find a new tuple of generators $a=bc$ so that, writing $\sigma^i(a)=a_i=b_ic_i$
and $L_n=K(a_0,\ldots,a_n)$ for the fraction field of $A_n$, we have
that  for $n\geq 1$,
\begin{enumerate}
\item\label{prvi} $b_n$ is algebraically
independent over $L_{n-1}$, and 
\item\label{drugi}
$[L_n:L_{n-1}(b_n)]=
[L_{n+1}:L_n(b_{n+1})]$.
\end{enumerate}
Given a diagram
$$
 \begin{tikzpicture} 
\matrix(m)[matrix of math nodes, row sep=2.6em, column sep=2.8em, text height=1.5ex, text depth=0.25ex]
 {k & B\\
   C & K\\}; 
\path[->,font=\scriptsize,>=to, thin]
(m-2-1) edge (m-2-2) 
(m-1-1) edge node[left] {${\sigma}$} (m-2-1)
(m-1-2) edge node[right] {${\sigma}$} (m-2-2)
(m-1-1) edge (m-1-2);
\end{tikzpicture}
$$
of sub-$k$-algebras $B$, $C$ of a difference field $(K,\sigma)$, we shall say that 
$B$ is $\sigma$-linearly disjoint from $C$ over $k$ if
whenever $\{\beta_1,\ldots,\beta_r\}\subseteq B$ is linearly independent over $k$,
then $\{\sigma(\beta_1),\ldots,\sigma(\beta_r)\}$ is linearly independent over $C$. 
This is equivalent to the injectivity of the natural map 
$B\otimes_kC\to\sigma(B)C$.

Using (\ref{prvi}), we see that $L_{n-1}(b_{n})$ is $\sigma$-linearly disjoint from $L_{n}$
over $L_{n-1}$, and using (\ref{drugi}) we deduce that $L_n$ is $\sigma$-linearly disjoint
from $L_n(b_{n+1})$ over $L_{n-1}(b_n)$. 
 By transitivity of $\sigma$-linear disjointness,
it follows from the diagram
$$
 \begin{tikzpicture}
[cross line/.style={preaction={draw=white, -,
line width=4pt}}]
\matrix(m)[matrix of math nodes, row sep=1cm, column sep=.5cm, text height=1.5ex, text depth=0.25ex]
{|(0m1)|{\cdots}& |(00)|{L_{n-1}} & |(01)|{L_{n-1}(b_n)} & |(02)|{L_n}& |(03)|{
\cdots\phantom{aaaa}}& |(04)|{}&\\
|(1m1)|{}&|(10)|{}&|(11)|{
\phantom{aaaa}\cdots}&|(12)|{L_n}&|(13)|{L_n(b_{n+1})}&|(14)|{L_{n+1}}&|(15)|{\cdots} \\
};
\path[->,font=\scriptsize,>=to, thin]
(0m1)edge(00) (00)edge(01) (01)edge(02) (02)edge(03) 
(11.east)edge(12) (12)edge(13) (13) edge (14)  (14)edge(15)
(00)edge node[right=4pt,pos=0.4]{$\sigma$}(12) (01)edge node[right=4pt,pos=0.4]{$\sigma$}(13) (02)edge node[right=4pt,pos=0.4]{$\sigma$}(14)       
;
\end{tikzpicture}
$$ 
 that $L_n$ is $\sigma$-linearly disjoint from $L_n$ over $L_{n-1}$ for all $n\geq 1$.

Now, using generic freenes \cite[Lemme 6.9.2]{EGAIV(2)},  by 
$\sigma$-localising $A$ (by an element of $A_0$)
we may assume that $\pi_{10}:A_0\to A_1$ and $\sigma_0:A_0\to A_1$
are free (i.e., $A_1$ is a free $A_0$-module both via $\pi_{10}$ and $\sigma_0$),
 so $A_1\otimes_{A_0}A_1$ is a free $A_0$ module. Thus
the natural morphism
$$
(A_1\otimes_{A_0}A_1)\to L_0\otimes_{A_0}(A_1\otimes_{A_0}A_1)$$
is injective, the kernel in general being the $A_0$-torsion of $A_1\otimes_{A_0}A_1$,
which we thoughtfully made trivial. Moreover, by linear disjointness guaranteed
by the construction, 
\begin{multline*}
L_0\otimes_{A_0}(A_1\otimes_{A_0}A_1)=
(L_0\otimes_{A_0}A_1)\otimes_{L_0}(L_0\otimes_{A_0}A_1)\\
=L_0[A_1]\otimes_{L_0}L_0[A_1]\to L_0[A_1,\sigma(A_1)]=L_0[A_2]
\end{multline*}
is injective. We conclude that
$A_1\otimes_{A_{0}} A_1\to A_{2}$ is injective and thus bijective by the construction, and that both $\pi_{21}:A_1\to A_2$ and $\sigma_1:A_1\to A_2$ are free.
This is all we need to proceed by induction and prove that
all  $$A_n\otimes_{A_{n-1}} A_n\to A_n\otimes_{A_{n-1}^\sigma}A_n^\sigma\to A_{n+1}$$ are isomorphisms.
Note, if we are happy to finish with the associated
 morphisms being just closed immersions
which are generically isomorphisms, we can skip the `generic freeness' step and
we do not need to localise $A$ but only $R$.
\end{proof}

\begin{definition}
Let $P$ be a property of scheme morphisms of finite type.
Consider the following permanence properties of $P$:
\begin{enumerate}
\item (Composite). A composite of morphisms with property $P$ has property $P$.
\item (Base change). If $X\to Y$ has $P$, and $Z\to Y$ is arbitrary, then $X\times_YZ\to Z$ has $P$.
\item (Open embedding). If $X\to Y$ has $P$, and $U\hookrightarrow X$, then $U\to Y$ has $P$.
\item\label{trgt} (Genericity in the target). If $f:X\to Y$ (with $Y$ integral) is generically $P$, there is a localisation $Y'$ of
$Y$ such that $f\restriction f^{-1}(Y')$ is $P$.
\item[(\ref{trgt}')] (Genericity in the source). If $f:X\to Y$ (with $Y$ integral) is generically $P$, there is a localisation $X'$ of
$X$ and $Y'$ of $Y$ such that $f\restriction X'\cap f^{-1}(Y')$ is $P$.
\end{enumerate}
We say that $P$ is \emph{hereditary} if it has properties (1)--(3).
It is \emph{hereditarily generic in the target
(resp.~source)}, if it is hereditary with property (\ref{trgt}) (resp.~(\ref{trgt}')). 
The property  $P$ is \emph{strongly hereditary} if in addition,
\begin{enumerate}
\item[(5)] (SH). If $g\circ f$ and $g$ have $P$, then $f$ has $P$.
\end{enumerate}
\end{definition}

\begin{definition}
\begin{enumerate}
\item Let $(X,\sigma)$ be an $(R,\sigma)$-difference scheme of finite $\sigma$-type.
We say that $(X,\sigma)$ has the property $\sigma$-$P$, if there exists a prolongation
sequence $X_n$ as above for $X$ such that all the structure maps $X_n\to\spec(R)$
have the property $P$.

\item Let $P$ be a property of scheme morphisms of finite type. Let $f:(X,\sigma)\to (Y,\sigma)$ be a morphism of finite $\sigma$-type.
We say that $f$ has the property $\sigma$-$P$, if for every open affine $V=Spec^\sigma(R)$ in $Y$, the scheme $f^{-1}(V)$ has the property $\sigma$-$P$.

\item Let $P$ be a property of morphisms of schemes of finite type. Let $f:(X,\sigma)\to (Y,\sigma)$ be a morphism of schemes of finite $\sigma$-type (over some common base).
We say that $f$ has the property $\sigma$-$P$, if there exists a prolongation
sequence $f_n:X_n\to Y_n$ for $f$ such that all the maps $f_n$ have the property $P$.
\end{enumerate}
\end{definition}

\begin{remark}Suppose $P$ is strongly hereditary.
\begin{enumerate} 
\item If  $(X,\sigma)$ is $\sigma$-$P$, then all the
connecting morphisms $X_{n+1}\to X_n$ have the property $P$.
\item 
If a morphism  $f:(X,\sigma)\to (Y,\sigma)$ of schemes of finite $\sigma$-type (over some common base)  is $\sigma$-$P$, and $(Y,\sigma)$ is $\sigma$-$P$, then $(X,\sigma)$ is $\sigma$-$P$.
\end{enumerate}
\end{remark}

\begin{proposition}\label{sigma-generic}
\begin{enumerate}
\item\label{frsthf}
Let $P$ be a property of scheme morphisms of finite type which is hereditarily
generic in the source/target.
 Then the
property $\sigma$-$P$ is $\sigma$-generic in the source. 
In other words, if $f:(X,\sigma)\to (Y,\sigma)$
is a morphism of finite $\sigma$-type which is generically $\sigma$-$P$, then
there exists a $\sigma$-localisation $X'$ of $X$ and $Y'$ of $Y$ 
such that $f\restriction X'$ is 
$\sigma$-$P$ above $Y'$, i.e., $f\restriction X'\cap f^{-1}(Y')$ is $\sigma$-$P$.

\item\label{scnd} The same statements apply when $P$ is a (target/source) 
hereditarily generic property of morphisms of schemes of finite
type and $f:(X,\sigma)\to (Y,\sigma)$ is a morphism of schemes of finite $\sigma$-type.
\end{enumerate}
\end{proposition}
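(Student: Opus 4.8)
The plan is to funnel both statements through a single mechanism on prolongation sequences: after a suitable $\sigma$-localisation the connecting maps of a tower become \emph{base changes of the first connecting map}, so that $P$ climbs the tower using only the (base change) and (composite) permanence properties, with no appeal to strong heredity (which is not assumed here).

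First I would carry out the reductions and normalise the towers. For (\ref{frsthf}) the property $\sigma$-$P$ of a morphism is local on the target, so I may take $Y=\specd(R)$ affine and integral and $X=\specd(A)$ with $A$ of finite $\sigma$-type over $R$, and build the prolongation sequence $X_n=\spec(A_n)$, $A_n=R[a,\ldots,\sigma^n a]$, in which the relevant maps are the structure maps $X_n\to\spec(R)$. For (\ref{scnd}) a prolongation sequence $f_n\colon X_n\to Y_n$ is already part of the data. In either case, after $\sigma$-localising so that the hypotheses of the Preparation Lemma \ref{preprl} are met, I invoke \ref{preprl} to arrange that for $n\geq 1$ the maps $X_{n+1}\to X_n\times_{X_{n-1}}X_n$ (and similarly for $Y$) are isomorphisms, or at worst closed immersions that are generically isomorphisms. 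This rigidity presents each $X_n$ as an iterated fibre product of copies of $X_1$ over $X_0$.

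The heart of the argument is the resulting base-change structure. In the setting of (\ref{frsthf}) the connecting map $c_n\colon X_{n+1}=X_n\times_{X_{n-1}}X_n\to X_n$ is, for $n\geq 1$, the base change of $c_{n-1}$ along itself; hence once the first connecting map $c_0\colon X_1\to X_0$ has $P$, every $c_n$ has $P$ by (base change), and then every structure map $X_n\to\spec(R)$ has $P$ by (composite). In the setting of (\ref{scnd}) one runs the same argument with the comparison maps $g_n\colon X_{n+1}\to X_n\times_{Y_n}Y_{n+1}$: a direct computation identifies $g_n$ as a base change of $g_0$, so that $g_0$ having $P$ forces every $g_n$ to have $P$, and since $f_{n+1}$ factors as $g_n$ followed by the base change of $f_n$ along $Y_{n+1}\to Y_n$, the properties (base change) and (composite) propagate $P$ from $f_0$ and $g_0$ to all $f_n$. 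In both cases it therefore suffices to secure $P$ for finitely many low-level maps.

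It remains to unwind the hypothesis and assemble the localisation. The assumption that $f$ is generically $\sigma$-$P$ provides, over the generic point and for the normalised prolongation sequence, that the relevant low-level maps ($c_0$ and $X_0\to\spec(R)$, respectively $f_0$ and $g_0$) are generically $P$; applying the permanence property (\ref{trgt}') (resp.\ (\ref{trgt})) then yields localisations of $X$ and $Y$, at finitely many elements of $A_0,A_1$ and $R$, on which these maps have $P$. Because all of this happens at a bounded level, inverting the chosen elements together with all of their $\sigma$-transforms $g,\sigma g,\sigma^2 g,\ldots$ produces bona fide $\sigma$-localisations $X'$ and $Y'$, and the propagation of the previous paragraph shows that $f\restriction X'\cap f^{-1}(Y')$ is $\sigma$-$P$ above $Y'$. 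The main obstacle is exactly this last passage from level-by-level genericity to a \emph{single} $\sigma$-localisation: the opens produced by (\ref{trgt}') at different levels are a priori unrelated, and it is only the base-change rigidity secured through \ref{preprl} --- collapsing the infinitely many conditions to the two lowest levels --- that makes a finite, $\sigma$-stable choice of localising data possible.
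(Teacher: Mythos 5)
Your proposal follows the paper's own proof essentially step for step: normalise the prolongation tower via the Preparation Lemma so that the comparison maps $X_{n+1}\to X_n\times_{X_{n-1}}X_n$ are controlled, secure $P$ at the two lowest levels ($X_0\to Y_0$ and $X_1\to X_0$) using the genericity axioms, and then propagate $P$ up the tower by (base change) and (composite), exactly as in the paper's induction. The only point to watch is that after the second round of localisation these comparison maps are in general only \emph{open} immersions into the fibre product (the paper handles this with the open-embedding permanence property), whereas your fallback to closed immersions that are generically isomorphisms would not interact with any of the listed permanence properties --- but since your main line keeps them isomorphisms, this does not affect the argument.
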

\begin{proof}
Let us prove (\ref{scnd}), the proof of (\ref{frsthf}) being strictly easier. 
We shall assume the reader has constructed, upon a $\sigma$-localisation of the source, the relevant diagram of prolongations
for $f_n:X_n\to Y_n$ using the Preparation Lemma~\ref{preprl}.
In the case of genericity in the target, by using (G), modulo a 
$\sigma$-localisation of $Y$ we can assume
that $X_0\to Y_0$ has $P$. Using (G) and (O), by $\sigma$-localising $X$ by an
element of $X_0$ we can assume that $X_1\to X_0$ also has $P$.

In the case of genericity in the source, using (G), by a $\sigma$-localisation of $X$
and $Y$
we can assume that $X_0\to Y_0$ has $P$. Using (G) again, we need to
$\sigma$-localise $X$ further to make $X_1\to X_0$ have the property $P$.
Using (O), the new $X_0\to Y_0$ still has $P$, but we lose the exact 
$\sigma$-generation in terms of fibre products to the extent that
$X_{n+1}\to X_n\times_{X_{n-1}^\sigma}X_n^\sigma$ are no longer
isomorphisms for $n\geq 1$, but only open immersions.

We proceed by induction. Assuming that $X_{n-1}\to Y_{n-1}$ and $X_n\to X_{n-1}$
have $P$, using (C), we get that $X_n\to Y_n$ has $P$. Moreover,
using (BC), we obtain that $X_n\times_{X_{n-1}}X_n\to X_n$ has $P$. 
By (O) and the fact that 
$X_{n+1}\hookrightarrow X_n\times_{X_{n-1}}X_n$ for $n\geq 1$, 
we can deduce that $X_{n+1}\to X_n$ also has $P$, which keeps the induction going.

Let us note that, in case of a property strongly hereditarily generic in the target, 
if the
preparation lemma could be improved so that we need only localise the base,
then we could prove that $\sigma$-$P$ is $\sigma$-generic in the target.
\end{proof}

\begin{corollary}\label{loc-si-smooth}
Let $f:(X,\sigma)\to (Y,\sigma)$ be a morphism of finite $\sigma$-type.
\begin{enumerate}
\item If $f$ is separable then there is a $\sigma$-localisation $X'$ of $X$
and $Y'$ of $Y$ such that $f\restriction X'\cap f^{-1}(Y')$ is $\sigma$-smooth.
\item If $f$ is separable algebraic, then there is a $\sigma$-localisation
$X'$ of $X$ and $Y'$ of $Y$ such that $f\restriction X'\cap f^{-1}(Y')$ is 
$\sigma$-\'etale.
\end{enumerate}
\end{corollary}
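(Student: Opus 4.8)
The plan is to read off both parts directly from Proposition~\ref{sigma-generic}(\ref{frsthf}), taking $P$ to be smoothness in part~(1) and étaleness in part~(2). First I would check that each of these properties of finite-type scheme morphisms is hereditarily generic in the target. Stability under composition, base change and open immersion is classical, so smoothness and étaleness are hereditary; genericity in the target is exactly the standard spreading-out (generic smoothness) theorem from \cite{EGAIV(2)}, which extends a smooth (resp.\ étale) generic fibre of a morphism to an integral base over a dense open of that base. Thus both choices of $P$ satisfy the hypotheses of Proposition~\ref{sigma-generic}(\ref{frsthf}).

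The remaining content is to see that separability of $f$ is the same as $f$ being generically $\sigma$-smooth. Here I would construct a prolongation sequence $f_n\colon X_n\to Y_n$ for $f$ by the Preparation Lemma~\ref{preprl}, whose separability hypothesis on the transition maps is furnished precisely by the separability of $f$. Separability of $f$ then amounts to separability of every function-field extension $k(X_n)/k(Y_n)$, and by the classical criterion (a dominant finite-type morphism of integral schemes is generically smooth exactly when its function-field extension is separably generated) each $f_n$ is smooth at its generic point. Hence $f$ is generically $\sigma$-smooth, and Proposition~\ref{sigma-generic}(\ref{frsthf}) with $P$ smoothness produces $\sigma$-localisations $X'$ of $X$ and $Y'$ of $Y$ with $f\restriction X'\cap f^{-1}(Y')$ being $\sigma$-smooth, which is part~(1). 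For part~(2) the only change is that $f$ separable algebraic forces each $k(X_n)/k(Y_n)$ to be \emph{finite} separable, so each $f_n$ is generically étale, $f$ is generically $\sigma$-étale, and the same proposition with $P$ étaleness gives the claim.

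The heavy lifting is done by Proposition~\ref{sigma-generic}, so the main obstacle I anticipate is the dictionary in the middle step: verifying that the difference-algebraic notion of separability of $f$ really does coincide with separability of all the prolongation extensions $k(X_n)/k(Y_n)$, and that a prolongation sequence meeting the hypotheses of \ref{preprl} can be built after a harmless preliminary $\sigma$-localisation. Once the separability of $f$ is seen to feed exactly into the ``all $A_n\to A_{n+1}$ separable'' hypothesis of the Preparation Lemma, the permanence-property bookkeeping of \ref{sigma-generic} and the generic smoothness theorem combine without further trouble.
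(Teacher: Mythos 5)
Your proposal is correct and follows essentially the same route as the paper, which states this corollary without a separate proof precisely because it is a direct application of Proposition~\ref{sigma-generic} with $P$ taken to be smoothness (resp.\ \'etaleness), the separability hypothesis supplying the generic $\sigma$-smoothness via the Preparation Lemma~\ref{preprl}. The only small imprecision is your claim that generic smoothness is genericity \emph{in the target}: in positive characteristic a separable morphism is only generically smooth in the source (one must shrink $X$ as well as $Y$), but since Proposition~\ref{sigma-generic} covers the source case and its conclusion is exactly the localisation of both $X$ and $Y$ asserted in the corollary, this does not affect the argument.
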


\begin{corollary}\label{loc-si-integralfibres}
Let $f:(X,\sigma)\to (Y,\sigma)$ be a morphism of finite $\sigma$-type
whose generic fibre is geometrically integral. Then there is a $\sigma$-localisation $Y'$ of $Y$
such that $f\restriction f^{-1}(Y')$ has geometrically integral fibres.
\end{corollary}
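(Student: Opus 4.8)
The plan is to transport the statement to the classical fact that the locus of geometrically integral fibres of a finitely presented morphism is constructible, the genuinely difference-theoretic content being the passage from this level-by-level information to a \emph{uniform} statement realised by a single $\sigma$-localisation of the base. First I would run the Preparation Lemma~\ref{preprl} relative to $Y=\specd(R)$, observing that the hypothesis ``generic fibre geometrically integral'' supplies exactly the separability of the relevant function field extensions needed to apply it; invoking the variant recorded at the end of that proof, I localise only the base $R$ (that is, $Y$) and obtain a prolongation sequence $f_n\colon X_n\to Y_n$ for which the maps $X_{n+1}\to X_n\times_{X_{n-1}^\sigma}X_n^\sigma$ are generic isomorphisms, with $Y$ Zariski dense in $Y_0$.

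For a difference point $y\in Y$ with images $y_n\in Y_n$, the fibre $X_y=X\times_Y\specd(\kappa(y))$ is a difference scheme whose prolongation is $(X_y)_n=(X_n)_{y_n}$, and the fibre-product presentation descends to $(X_y)_{n+1}\cong (X_y)_n\times_{(X_y)_{n-1}^\sigma}(X_y)_n^\sigma$. Since $(-)^\sigma$ is base change along an automorphism, it preserves geometric integrality, so $X_y$ is geometrically integral as a difference scheme precisely when every ordinary scheme $(X_y)_n$ is geometrically integral. Thus the hypothesis says that each $f_n$ has geometrically integral generic fibre, and the goal becomes: produce a single $\sigma$-localisation $Y'$ of $Y$ over which every $f_n$ has geometrically integral fibres at every point.

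For each fixed $n$, the classical constructibility of the geometrically-integral-fibre locus (EGA~IV, 9.7.7) applied to $f_n$ yields a dense open $V_n\subseteq Y_n$ over which all fibres of $f_n$ are geometrically integral; the difficulty is purely one of uniformity, namely realising all the $V_n$ at once by a localisation downstairs. Here I would exploit the tower together with a preservation lemma: if $Z$ is geometrically integral over a field, $g\colon U\to Z$ is flat with geometrically integral fibres and $W\to Z$ is arbitrary with $W$ geometrically integral, then $U\times_Z W\to W$ is flat with geometrically integral fibres, whence $U\times_Z W$ is geometrically integral. The point is that the projections $X_{n+1}\to X_n$ and $X_{n+1}\to X_n^\sigma$ are, generically, base changes of the two bottom morphisms $X_1\to X_0$ and $X_1\to X_0^\sigma$, which are generically flat by generic flatness \cite{EGAIV(2)}; so once $f_0$ and these two bottom morphisms have geometrically integral fibres over a common dense open, an induction along the tower propagates the property to every level. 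Controlling these finitely many bottom-level data by constructibility and intersecting the resulting opens produces the desired $\sigma$-localisation $Y'$, and unwinding the tower shows $X_y$ is geometrically integral for each $y\in Y'$.

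The main obstacle I expect is precisely this uniformity step, and within it the verification that the connecting morphisms of the prolongation are generically flat with geometrically integral (equivalently, regular) fibres. Unlike $\sigma$-smoothness in \ref{loc-si-smooth}, geometric integrality of fibres is \emph{not} hereditary: it fails under composition, since $V(xy)\to\Af^1$ has geometrically integral fibres over the geometrically integral base $\Af^1$ yet reducible total space, so Proposition~\ref{sigma-generic} cannot be invoked off the shelf and the regularity of the transition maps must instead be extracted from the $\sigma$-linear disjointness manufactured inside the Preparation Lemma. A secondary, more bookkeeping point is that the open conditions naturally live on $X_0^\sigma$ and on $Y_1$, and one must check they can be realised by $\sigma$-localising $Y$ by an element of the level-zero difference coordinate ring, which is exactly where the Zariski density of $Y$ in $Y_0$ and the base-only version of \ref{preprl} are used.
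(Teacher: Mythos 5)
Your route is genuinely different from the paper's: the paper disposes of this corollary in two lines by feeding the property ``geometrically integral fibres'' into Proposition~\ref{sigma-generic}, and you are right to be suspicious of that shortcut, since this property fails the composition axiom (your $V(xy)\to\Af^1$ example is correct) and so is not hereditary in the paper's sense. That diagnosis is a real observation. The trouble is that your replacement --- propagating geometric integrality up the prolongation tower by first arranging that the bottom projections $X_1\to X_0$ and $X_1\to X_0^\sigma$ are flat with geometrically integral fibres --- asks for something the hypothesis does not provide and no localisation can create: the hypothesis controls the fibres of $X_n$ over $Y_n$, not the fibres of $X_1$ over $X_0$ or $X_0^\sigma$. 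Concretely, take $Y=\specd(k)$ with $\mathrm{char}(k)\neq 2$ and $X=\specd\bigl(k[x,y,z,w]_\sigma/[\sigma(y)^2-x,\ w^2-\sigma(z)]\bigr)$. Eliminating $\sigma^i(x)=\sigma^{i+1}(y)^2$ and $\sigma^{i+1}(z)=\sigma^i(w)^2$ shows every $A_n$ is a polynomial ring over $k$, so every fibre of $f$ is geometrically integral and the corollary holds trivially, and the presentation $X_2\cong X_1\times_{X_0^\sigma}X_1^\sigma$ holds on the nose. Yet the generic fibre of $X_1\to X_0^\sigma$ contains $\spec\,\kappa[w]/(w^2-\sigma z)$ and that of $X_1^\sigma\to X_0^\sigma$ contains $\spec\,\kappa[\sigma^2y]/((\sigma^2y)^2-\sigma x)$, so \emph{neither} projection has geometrically integral fibres even generically; your induction can never get started even though its intended conclusion is true.

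What actually makes $X_2$ integral here is not fibrewise integrality of either projection but the $\sigma$-linear disjointness of $L_1$ from $L_1$ over $L_0$ manufactured inside the Preparation Lemma~\ref{preprl}, together with the freeness of $A_1$ over $A_0$; a correct argument has to specialise that linear disjointness to the fibre over each closed point $y$, rather than route through the constructibility of the geometrically-integral locus for the transition maps. There is also the secondary issue you downgrade to ``bookkeeping'': the freeness and any open condition on the projections live on $X_0$ and $X_0^\sigma$, so they are bought by $\sigma$-localising $X$ by an element of $A_0$, not $Y$ by an element of $R$; Zariski density of $Y$ in $Y_0$ does nothing against a bad locus sitting upstairs, and this is precisely why both \ref{preprl} and \ref{sigma-generic} (and the paper's own proof, which then has to pass from $f\restriction X'$ back to $f$) localise source and target. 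So: your reason why \ref{sigma-generic} cannot be quoted verbatim is a genuine contribution, but the repair as proposed would fail.
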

\begin{proof}
If $f$ has generic fibre which is geometrically integral, then without loss of generality
$f$  has 
generically $\sigma$-geometrically integral fibres. By the proposition,
there is a $\sigma$-localisation $X'$ of $X$ and $Y'$ of $Y$ 
such that $f\restriction X'$ has
$\sigma$-geometrically integral fibres above $Y'$, so in particular $f$ has 
geometrically integral fibres above $Y'$.
\end{proof}

\begin{corollary}\label{locnorm}
Let $(X,\sigma)$ be an $(R,\sigma)$-difference scheme of finite $\sigma$-type
which is separable. There is a $\sigma$-localisation $X'$ of $X$ and $R'$ of $R$
such that $X'/R'$ is normal (in the sense of \ref{defnormal}).
\end{corollary}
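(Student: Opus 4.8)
The plan is to recognise normality as the difference-theoretic property $\sigma$-$P$ attached to a suitable relative property $P$ of morphisms of schemes of finite type, and then to invoke \ref{sigma-generic}. I would take $P$ to be the property of being a \emph{normal morphism} in the sense of \cite{EGAIV(2)}, i.e.\ a flat morphism with geometrically normal fibres; the definition \ref{defnormal} of normality of $X/R$ is precisely that some prolongation sequence $X_n\to\spec(R)$ consists of such morphisms (with $R$ normal, so that each $X_n$ is then itself a normal scheme). Thus the corollary follows once I check that $P$ is hereditarily generic in the target and that the structure morphism $(X,\sigma)\to\spec^\sigma(R)$ is generically $\sigma$-$P$, after which \ref{sigma-generic}(\ref{frsthf}) produces the desired $\sigma$-localisations.

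First I would reduce to the setting of the Preparation Lemma \ref{preprl}: after a $\sigma$-localisation I may assume $X=\spec^\sigma(A)$ is affine and that $A$, $R$ are algebraically integral, separability guaranteeing that the extensions $A_n\to A_{n+1}$ are separable. Next I would verify that $P$ is hereditary: stability of normal morphisms under base change and open immersion is immediate from the behaviour of flatness and of geometric normality of fibres, while stability under composition, together with the fact that a normal morphism over a normal base has normal total space, are the standard ascent/descent statements for the conditions $(R_k)$ and $(S_k)$ in \cite{EGAIV(2)}. For genericity in the target I would combine generic flatness with the openness of the locus of points of the base whose fibre is geometrically normal; since flatness is local on the base, for a morphism whose generic fibre is geometrically normal this yields a dense open of the target over which the morphism is a normal morphism.

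To see that the structure morphism is generically $\sigma$-$P$, I would pass to the generic fibre $(X_K,\sigma)$, where $K$ is the fraction field of $R$, which is again separable over $(K,\sigma)$. Separability means that each prolongation of $X_K$ is geometrically reduced over $K$, and a geometrically reduced scheme of finite type over a field is geometrically normal on a dense open subset (the non-normal locus of its base change to $\bar{K}$ is a proper $\Gal(\bar{K}/K)$-stable closed set, hence descends). Localising these prolongations accordingly exhibits $f$ as generically $\sigma$-normal, so \ref{sigma-generic}(\ref{frsthf}) supplies $\sigma$-localisations $X'$ of $X$ and $R'$ of $R$ for which the prolongations $X'_n\to\spec(R')$ are normal morphisms. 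A final $\sigma$-localisation putting $R'$ into its normal locus (a dense open, as our base rings are excellent) then makes each $X'_n$ a normal scheme via ascent of normality along a normal morphism, so that $X'/R'$ is normal in the sense of \ref{defnormal}.

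The main obstacle, and the only place where genuinely nontrivial input is needed, is the verification that $P$ is generic in the target: this rests on the openness of the locus of geometrically normal fibres of a finite-type morphism \cite{EGAIV(2)}, which is considerably more delicate than the corresponding spreading-out statements for reducedness or integrality exploited in \ref{loc-si-integralfibres}. A secondary point requiring care is that the localisations performed on the generic fibre must be realised by honest $\sigma$-localisations of $X$, obtained by clearing denominators to move the relevant elements into the coordinate rings $A_n$; this, however, is exactly the bookkeeping already incorporated in the proof of \ref{sigma-generic}.
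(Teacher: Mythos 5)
Your argument is correct, but it takes a genuinely different route from the paper. The paper's proof is a single line: it invokes \ref{loc-si-smooth} to find a $\sigma$-localisation $X'/R'$ which is $\sigma$-smooth, and then normality of the local rings follows since the prolongations $X'_n$ are smooth over a (further localised, normal) base. You instead treat ``normal morphism'' (flat with geometrically normal fibres) as the property $P$ in its own right, verify that it is hereditarily generic in the target, and feed it into \ref{sigma-generic} directly. Both routes ultimately rest on \ref{sigma-generic}; the difference is which $P$ is used. The paper's choice of $P=\text{smooth}$ is cheaper, since generic smoothness of a separable morphism is elementary, whereas your genericity check for $P=\text{normal}$ requires the openness of the locus of geometrically normal fibres from \cite{EGAIV(2)}, which is a more delicate input (as you yourself flag). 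What your approach buys in exchange is a statement that does not factor through smoothness: it would still produce normal localisations in situations where the generic fibre is geometrically normal but not generically smooth over the relevant intermediate data, and it records explicitly the final step -- localising $R'$ into its normal locus and ascending normality along the prolongations -- which the paper leaves implicit. Under the separability hypothesis of the corollary, however, the two arguments prove the same thing, and the paper's is the shorter path.
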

\begin{proof}
By \ref{loc-si-smooth}, take a $\sigma$-localisation $X'/R'$ which is $\sigma$-smooth.
\end{proof}

\subsection{Local Properties}\label{local-properties}

This subsection is mostly concerned with the question of whether 
it is reasonable to expect that if a property holds locally, at every point
of a fixed point spectrum of a difference ring, then it also holds globally.
\begin{definition} 
 Let $(M,\sigma)$ be an $(A,\sigma)$-module and let $(N,\sigma)$ be a sumbodule. 
 \begin{enumerate}
 \item We say that $(M,\sigma)$ is \emph{well-mixed} if $am=0$ implies $\sigma(a)m=0$
 for all $a\in A$, $m\in M$.
 \item We say that $(N,\sigma)$ is a \emph{well-mixed submodule} of $(M,\sigma)$
 if the module $M/N$ is well-mixed.
 \end{enumerate}
\end{definition} 

Clearly $(M,\sigma)$ is well-mixed if and only if the annihilator $\mathop{Ann}(m)$ of
any $m\in M$ is a well-mixed $\sigma$-ideal in $(A,\sigma)$. Indeed, if $ab\in \mathop{Ann}(m)$,
then $a(bm)=0$ so $\sigma(a)(bm)=(\sigma(a)b)m=0$ and $\sigma(a)b\in\mathop{Ann}(m)$.

Moreover, since the intersection of well-mixed submodules is well-mixed and $M$ is trivially a well-mixed submodule of itself, for every submodule $(N,\sigma)$ of $(M,\sigma)$ there exists  a smallest well-mixed submodule $[N]_w$ containing $N$. Thus $[0]_w$ is the smallest 
well-mixed submodule of $(M,\sigma)$ associated with
the largest well-mixed quotient $M_w$ of $M$.

\begin{proposition}\label{loczero}
Let $(M,\sigma)$ be a well-mixed $(A,\sigma)$-module.
The following are equivalent.
\begin{enumerate}
\item\label{uno} $M=0$;
\item\label{due} $M_\p=0$ for every $\p\in\spec^\sigma(A)$.
\item\label{tre}  $M_\p=0$ for every $\p$  maximal in $\spec^\sigma(A)$.
\end{enumerate}
\end{proposition}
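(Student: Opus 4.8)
The plan is to mirror the classical local-to-global vanishing criterion, with well-mixedness supplying the one genuinely difference-algebraic ingredient. The implications $(\ref{uno})\Rightarrow(\ref{due})\Rightarrow(\ref{tre})$ are formal: localisation sends $0$ to $0$, and the maximal points of $\spec^\sigma(A)$ form a subfamily of all of $\spec^\sigma(A)$. So the entire content sits in $(\ref{tre})\Rightarrow(\ref{uno})$, which I would prove by contraposition.

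Suppose $M\neq 0$ and fix $0\neq m\in M$. As observed just before the statement, well-mixedness of $M$ makes $I:=\mathop{Ann}(m)$ a proper well-mixed $\sigma$-ideal of $A$; in particular $A/I$ is a nonzero well-mixed difference ring. I would then produce a point $\p\in\spec^\sigma(A)$, maximal in $\spec^\sigma(A)$, with $I\subseteq\p$, and conclude $M_\p\neq0$. For this last step note that since $\p$ is reflexive the set $A\setminus\p$ is $\sigma$-stable, so $M_\p$ is genuinely the difference localisation, and $m/1=0$ in $M_\p$ would force $sm=0$ for some $s\in A\setminus\p$, i.e.\ $s\in I\subseteq\p$, a contradiction; hence $m/1\neq0$ and $M_\p\neq0$, against $(\ref{tre})$.

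The existence of such a $\p$ splits into two parts. First, one must know that $\spec^\sigma(A/I)$ is nonempty, i.e.\ that a nonzero well-mixed difference ring carries at least one prime difference ideal; this is the foundational fact I would import from \cite{udi}, and it is exactly where well-mixedness is indispensable (the non-well-mixed ring $k\times k$ with the coordinate swap has no prime $\sigma$-ideal at all, so the hypothesis genuinely cannot be dropped). Granted one difference prime $\q\supseteq I$, a Zorn's lemma argument upgrades it to a maximal one: the union of a chain of reflexive prime $\sigma$-ideals containing $I$ is again a proper reflexive prime $\sigma$-ideal containing $I$ (primeness, $\sigma$-stability, reflexivity, and properness all pass to unions of chains), so chains have upper bounds, and a maximal element of $\{\p\in\spec^\sigma(A):\p\supseteq I\}$ is automatically maximal in $\spec^\sigma(A)$.

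I expect the nonemptiness of $\spec^\sigma(A/I)$ to be the only real obstacle; everything else is the verbatim commutative-algebra argument together with the bookkeeping that reflexivity of $\p$ keeps all localisations inside the category of difference modules. If a sharper cited statement is available --- namely that every proper well-mixed $\sigma$-ideal is contained in a maximal point of the difference spectrum --- then the Zorn step is absorbed into the citation and the proof collapses to the three-line contrapositive sketched above.
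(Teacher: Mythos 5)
Your proof is correct and follows essentially the same route as the paper's: both reduce to $(\ref{tre})\Rightarrow(\ref{uno})$ by contradiction, observe that the annihilator of a nonzero element is a proper well-mixed $\sigma$-ideal, invoke the nonemptiness of its difference vanishing locus (the paper cites \cite[\ref{wmaff}]{ive-tch} for this), pick a maximal $\p$ in that locus, and derive a contradiction from $m/1=0$ in $M_\p$. Your extra care about the Zorn step and about maximality in $V^\sigma(\mathop{Ann}(m))$ implying maximality in $\spec^\sigma(A)$ only fills in details the paper leaves implicit.
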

\begin{proof}
It is clear that (\ref{uno}) implies (\ref{due}) and (\ref{due}) implies (\ref{tre}). 
 Suppose that (\ref{tre})
holds but $M\neq 0$. Let $x\in M\setminus\{0\}$ and let $\A=\mathop{Ann}(x)$. Then
$\A\neq (1)$ is well-mixed and, by \cite[\ref{wmaff}]{ive-tch}, $V^\sigma(\A)\neq\emptyset$. Choose
a maximal $\p$ in $V^\sigma(\A)$. Since $x/1=0$ in $M_\p$, there exists an 
$a\notin\p$ such that $ax=0$, which is in
contradiction with $\mathop{Ann}(x)\subseteq\p$.
\end{proof} 

\begin{corollary}\label{corwkloczero}
Let $(M,\sigma)$ be an $(A,\sigma)$-module. If $M_\p=0$ for every $\p\in\spec^\sigma(A)$, 
then $M_w=0$.
\end{corollary}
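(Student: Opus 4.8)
The plan is to reduce the statement immediately to the localization principle for well-mixed modules established in \ref{loczero}. The only content is the observation that the largest well-mixed quotient $M_w$ inherits the vanishing-localization hypothesis from $M$.

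First I would record that $M_w=M/[0]_w$ is well-mixed by its very construction as the largest well-mixed quotient of $M$ (equivalently, $[0]_w$ is the smallest well-mixed submodule). Thus $M_w$ is an object to which \ref{loczero} may be applied, and it suffices to verify condition (\ref{due}) of that proposition for $M_w$, namely that $(M_w)_\p=0$ for every $\p\in\spec^\sigma(A)$.

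Next I would deduce this vanishing from the hypothesis $M_\p=0$. Since $M_w$ is a quotient of $M$ via the canonical surjection $M\twoheadrightarrow M_w$, and since localization at the multiplicative set $A\setminus\p$ is an exact functor of $A$-modules, the induced map $M_\p\to (M_w)_\p$ is again surjective. As $M_\p=0$ by assumption, we conclude $(M_w)_\p=0$ for every $\p\in\spec^\sigma(A)$. (Concretely, exactness of localization gives $(M/[0]_w)_\p=M_\p/([0]_w)_\p=0$.)

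Finally, applying the equivalence (\ref{uno})$\Leftrightarrow$(\ref{due}) of \ref{loczero} to the well-mixed module $M_w$, the vanishing of all its localizations forces $M_w=0$, which is exactly the assertion. There is no genuine obstacle here: the whole point is that passing to the well-mixed quotient is harmless for the localization hypothesis, so the work done in \ref{loczero} (which already internalised the use of \cite[\ref{wmaff}]{ive-tch} to locate a maximal $\sigma$-prime in the support of any nonzero annihilator) carries the corollary without further effort.
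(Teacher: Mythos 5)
Your proof is correct and is exactly the argument the paper intends: the corollary is stated without proof as an immediate consequence of \ref{loczero}, and the mechanism you use (exactness of localisation makes $(M_w)_\p$ a quotient of $M_\p$, then apply \ref{loczero} to the well-mixed module $M_w$) is the same one the paper makes explicit in the proof of the sharper Proposition~\ref{wkloczero}. Nothing to add.
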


The above can be sharpened as follows.

\begin{proposition}\label{wkloczero}
Let $(M,\sigma)$ be an $(A,\sigma)$-module. If $(M_\p)_w=0$ for every $\p$ 
maximal in $\spec^\sigma(A)$, then $M_w=0$.
\end{proposition}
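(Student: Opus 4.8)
The plan is to reduce the assertion to Proposition~\ref{loczero} applied to the well-mixed module $M_w$. Indeed, $M_w$ is well-mixed, so that proposition (in the form (\ref{tre})) tells us that $M_w=0$ as soon as $(M_w)_\p=0$ for every $\p$ maximal in $\spec^\sigma(A)$. Thus the whole problem is to show that the given hypothesis $(M_\p)_w=0$ forces $(M_w)_\p=0$; in other words, I would establish a comparison between localising the largest well-mixed quotient and forming the largest well-mixed quotient of the localisation. This is exactly the sharpening of Corollary~\ref{corwkloczero} that replaces the vanishing of $M_\p$ by the (weaker) vanishing of $(M_\p)_w$.

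The key technical ingredient is the claim that a localisation of a well-mixed module is again well-mixed. For $\p\in\spec^\sigma(A)$ the multiplicative set $S=A\setminus\p$ satisfies $\sigma(S)\subseteq S$ — this is precisely what makes $A_\p$ and $M_\p$ into difference objects, as is already tacitly used in \ref{loczero} — and I would verify directly that if $N$ is well-mixed then so is $N_\p$. Testing well-mixedness on numerators of the form $a/1$ suffices, since multiplying by the units $1/s'$ changes neither the relation $(a/s')\nu=0$ nor its $\sigma$-image. Now if $(a/1)(n/s)=0$ in $N_\p$, there is $t\in S$ with $tan=0$ in $N$, whence $\sigma(t)\sigma(a)n=0$ by well-mixedness of $N$, and since $\sigma(t)\in S$ this gives $\sigma(a)n/s=0$, i.e.\ $\sigma(a/1)(n/s)=0$, as required.

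With this in hand the argument is short. The module $M_w$ is well-mixed, so $(M_w)_\p$ is well-mixed, and localising the canonical surjection $M\to M_w$ (localisation being exact) yields a surjection $M_\p\to(M_w)_\p$ onto a well-mixed module. By the universal property of the largest well-mixed quotient — any homomorphism from $M_\p$ to a well-mixed module factors through $(M_\p)_w$, because its image is a submodule of a well-mixed module and is therefore itself well-mixed, so the kernel contains the smallest well-mixed submodule — this surjection factors as $M_\p\twoheadrightarrow(M_\p)_w\twoheadrightarrow(M_w)_\p$. In particular $(M_\p)_w=0$ forces $(M_w)_\p=0$.

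Finally I would assemble the pieces: by hypothesis $(M_\p)_w=0$ for every $\p$ maximal in $\spec^\sigma(A)$, hence $(M_w)_\p=0$ for all such $\p$, and Proposition~\ref{loczero} applied to the well-mixed module $M_w$ gives $M_w=0$. I expect the only genuinely delicate point to be the verification that localisation preserves well-mixedness, together with the bookkeeping ensuring $\sigma(S)\subseteq S$ so that the localised objects really are difference modules; the rest is formal manipulation of the universal property of $(\,\cdot\,)_w$ and an appeal to the already-established Proposition~\ref{loczero}.
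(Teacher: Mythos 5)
Your proof is correct and follows essentially the same route as the paper: both arguments produce a surjection $(M_\p)_w\twoheadrightarrow(M_w)_\p$ by factoring the localised quotient map $M_\p\to(M_w)_\p$ through the largest well-mixed quotient, and then conclude via Proposition~\ref{loczero} applied to $M_w$. You merely make explicit a point the paper leaves implicit, namely that localisation at $\p\in\spec^\sigma(A)$ preserves well-mixedness (so that the universal property applies), and your verification of that is accurate.
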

\begin{proof}
Using the universal properties of localisation and passing to well-mixed quotients,
 as well as the fact
that localisation is an exact functor, we construct a commutative diagram
$$
 \begin{tikzpicture} 
\matrix(m)[matrix of math nodes, row sep=2.6em, column sep=2em, text height=1.5ex, text depth=0.25ex]
 { M & M_\p &\\
   M_w &  (M_\p)_w &\\
   & & (M_w)_\p\\}; 
\path[->,font=\scriptsize,>=to, thin]
(m-1-1) edge node[left] {$\pi$} (m-2-1) edge node[above] {$\psi$} (m-1-2)
(m-2-1) edge node[above] {$\alpha$} (m-2-2) edge node[below]{$\psi'$} (m-3-3)
(m-1-2) edge node[right] {$\pi'$} (m-2-2) edge node[right]{$\pi_\p$} (m-3-3)
(m-2-2) edge node[above]{$\beta$} (m-3-3);
\end{tikzpicture}
$$
in which $\pi$ and $\pi'$ are surjective, so we conclude that $\pi_\p$ and 
$\beta$ are also surjective. Therefore, $(M_\p)_w=0$ implies that $(M_w)_\p=0$
and we finish by \ref{loczero}.
\end{proof}

\begin{proposition}\label{locinj}
Let $\phi:(M,\sigma)\to (N,\sigma)$ be an $(A,\sigma)$-module homomorphism and assume
that $(M,\sigma)$ is well-mixed. The following are equivalent.
\begin{enumerate}
\item\label{unoo} $\phi$ is injective;
\item\label{duee} $\phi_\p:M_\p\to N_\p$ is injective for every $\p\in\spec^\sigma(A)$.
\item\label{tree} $\phi_\p:M_\p\to N_\p$ is injective for every $\p$ maximal in $\spec^\sigma(A)$.
\end{enumerate}
\end{proposition}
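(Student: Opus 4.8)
The plan is to run the standard localisation argument by passing to the kernel, the only delicate point being that \ref{loczero} may only be applied to well-mixed modules.

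First I would set $K=\ker(\phi)$ and dispose of the easy implications. The chain (\ref{unoo})$\Rightarrow$(\ref{duee})$\Rightarrow$(\ref{tree}) is immediate: injectivity of $\phi$ forces $K=0$, hence $K_\p=0$ for every $\p$, and since localisation is an exact functor we have $K_\p=\ker(\phi_\p)$, which gives (\ref{duee}); the implication (\ref{duee})$\Rightarrow$(\ref{tree}) is trivial, the maximal $\sigma$-primes forming a subset of all of $\spec^\sigma(A)$.

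The substance lies in (\ref{tree})$\Rightarrow$(\ref{unoo}). Here I would first verify that $K$ is again well-mixed as an $(A,\sigma)$-module: if $m\in K$ and $am=0$, then since $m\in M$ and $(M,\sigma)$ is well-mixed we obtain $\sigma(a)m=0$, and this relation holds in $K$ as $K\subseteq M$. Thus well-mixedness is simply inherited by submodules of a well-mixed module. Next, invoking exactness of localisation once more, hypothesis (\ref{tree}) says precisely that $K_\p=\ker(\phi_\p)=0$ for every $\p$ maximal in $\spec^\sigma(A)$. Since $K$ is well-mixed, I can now apply the equivalence (\ref{uno})$\Leftrightarrow$(\ref{tre}) of \ref{loczero} to conclude $K=0$, i.e.\ that $\phi$ is injective.

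The main obstacle, such as it is, is recognising that well-mixedness passes to the kernel, since this is exactly what licenses the appeal to \ref{loczero}: the localisation-theoretic vanishing criterion of \ref{loczero} genuinely fails for modules that are not well-mixed, where one recovers only the vanishing of the largest well-mixed quotient (cf.\ \ref{corwkloczero}). Once this inheritance is in place, the argument is a verbatim transcription of the commutative-algebra proof of the analogous statement for modules over a ring, with $\spec^\sigma(A)$ and its maximal points playing the role of the ordinary (maximal) spectrum.
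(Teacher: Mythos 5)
Your proof is correct and follows essentially the same route as the paper: pass to the kernel, note that well-mixedness is inherited by submodules of $M$, use exactness of localisation to identify $\ker(\phi)_\p$ with $\ker(\phi_\p)$, and conclude by \ref{locinj}'s prerequisite \ref{loczero}. You merely spell out the inheritance of well-mixedness, which the paper dispatches in a parenthetical remark.
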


\begin{proof}
\noindent$(\ref{unoo})\Rightarrow(\ref{duee})$. If $0\to M\to N$ is exact, since localisation
is exact, we get that $0\to M_\p\to N_\p$ is also exact. 
$(\ref{duee})\Rightarrow(\ref{tree})$ is trivial.

\noindent$(\ref{tree})\Rightarrow(\ref{unoo})$. Let $M'=\ker\phi$. Then 
$0\to M'\to M\to N$ is exact so $0\to M'_\p\to M_\p\to N_\p$ is exact for every 
$\p\in\spec^\sigma(A)$. By assumption,  $M'_\p=0$ for every $\p\in\spec^\sigma(A)$.
 Since $M'$ is well-mixed (as a submodule of $M$), by \ref{loczero} we conclude that $M'=0$.
 \end{proof}

\begin{proposition}\label{locwksurj}
Let $\phi:(M,\sigma)\to (N,\sigma)$ be an $(A,\sigma)$-module homomorphism. 
If $\phi_\p:M_\p\to N_\p$ is almost surjective for every $\p$ maximal in $\spec^\sigma(A)$, then
$\phi$ is almost surjective, $[\mathop{\rm im}(\phi)]_w=N$ (equivalently, $\mathop{\rm coker}(\phi)_w=0$).
\end{proposition}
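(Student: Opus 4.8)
The plan is to convert the almost-surjectivity of $\phi$ into the vanishing of a single well-mixed quotient and then quote Proposition~\ref{wkloczero}. Concretely, I would set $C=\mathop{\rm coker}(\phi)=N/\mathop{\rm im}(\phi)$. By the discussion preceding \ref{loczero}, the well-mixed submodules of $C$ correspond to the well-mixed submodules of $N$ containing $\mathop{\rm im}(\phi)$, so the smallest well-mixed submodule of $C$ is $[\mathop{\rm im}(\phi)]_w/\mathop{\rm im}(\phi)$ and hence $C_w\cong N/[\mathop{\rm im}(\phi)]_w$. Thus the desired conclusion $[\mathop{\rm im}(\phi)]_w=N$ is literally the assertion $C_w=0$, exactly the parenthetical reformulation in the statement. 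It is worth stressing that no well-mixedness is assumed on $M$ or $N$, which is precisely the reason to route the argument through \ref{wkloczero} rather than through \ref{loczero}.

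Next I would localise at each maximal $\p\in\spec^\sigma(A)$. Since localisation at a $\sigma$-prime is exact and therefore commutes with the formation of cokernels, there is a natural isomorphism $C_\p\cong\mathop{\rm coker}(\phi_\p)$ of $(A_\p,\sigma)$-modules. The hypothesis that $\phi_\p$ is almost surjective says exactly that $\mathop{\rm coker}(\phi_\p)_w=0$, and so $(C_\p)_w=0$ for every maximal $\p$. Applying Proposition~\ref{wkloczero} to the module $C$ now gives $C_w=0$, which by the first paragraph is the claim.

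Since the genuine content is already packaged in \ref{wkloczero}, the argument is a short reduction rather than a computation. The only point that demands care is the compatibility $C_\p\cong\mathop{\rm coker}(\phi_\p)$ together with the resulting identification $(C_\p)_w=\mathop{\rm coker}(\phi_\p)_w$: one must check that localisation and the largest-well-mixed-quotient construction interact so that the per-point almost-surjectivity hypothesis feeds directly into the per-point hypothesis of \ref{wkloczero}. Once this bookkeeping is in place, no further difference-algebraic input is needed, and in particular the delicate behaviour of $\sigma$ under localisation at a $\sigma$-prime is entirely absorbed into the cited propositions.
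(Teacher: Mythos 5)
Your proposal is correct and follows essentially the same route as the paper: pass to $C=\mathop{\rm coker}(\phi)$, use exactness of localisation to identify $C_\p$ with $\mathop{\rm coker}(\phi_\p)$, and apply Proposition~\ref{wkloczero}. The extra bookkeeping you supply (the identification $C_w\cong N/[\mathop{\rm im}(\phi)]_w$ and the compatibility of localisation with the well-mixed quotient) is exactly the content the paper leaves implicit.
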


\begin{proof}
Let $N'=\mathop{\rm coker}(\phi)$. Then $M\to N\to N'\to0$ is exact, and by localisation 
$M_\p\to N_\p\to N'_p\to 0$ is exact for every $\p\in\spec^\sigma(A)$. By assumption, 
$(N'_\p)_w=0$ for all $\p$ maximal in $\spec^\sigma(A)$ and \ref{wkloczero} implies that $N'_w=0$.
\end{proof}

\begin{lemma}[\cite{eisenbud-comm-alg}, 6.4]\label{tensorzero}
Let $M$ and $N$ be $A$-modules and suppose $N$ is generated by $\{n_i\}$.
Then every element of $M\otimes_AN$ can be written as $\sum_i m_i\otimes n_i$ with
finitely many nonzero $m_i$
and $\sum_i m_i\otimes n_i=0$ in $M\otimes_AN$ if and only if there exist
$m_j'\in M$ and $a_{ij}\in A$ such that for every $i$,
$$
\sum_j a_{ij}m_j'=m_i%
$$
and for every $j$,
$$
\sum_i a_{ij}n_i=0.%
$$
\end{lemma}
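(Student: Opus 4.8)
The plan is to reduce everything to the right-exactness of the tensor functor applied to a free presentation of $N$. First I would dispose of the representation claim, which is the easy part: since $\{n_i\}$ generates $N$, any $n\in N$ is a finite combination $n=\sum_i a_i n_i$, whence $m\otimes n=\sum_i (a_i m)\otimes n_i$; as every element of $M\otimes_A N$ is a finite sum of simple tensors, it can be written as $\sum_i m_i\otimes n_i$ with only finitely many $m_i$ nonzero. The \textbf{``if''} direction is then a one-line computation: given data $m_j'$ and $a_{ij}$ with $\sum_j a_{ij}m_j'=m_i$ and $\sum_i a_{ij}n_i=0$, substituting and interchanging the order of summation gives $\sum_i m_i\otimes n_i = \sum_j m_j'\otimes\bigl(\sum_i a_{ij}n_i\bigr)=0$.

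The heart of the matter is the \textbf{``only if''} direction, and this is where I expect the real content to lie. Here I would introduce the free module $F=A^{(I)}$ on symbols $\{e_i\}$ together with the surjection $\pi:F\to N$ sending $e_i\mapsto n_i$, with kernel $K=\ker\pi$, yielding the exact sequence $0\to K\to F\xrightarrow{\pi} N\to 0$. Tensoring with $M$ and invoking right-exactness produces the exact sequence $M\otimes_A K\to M\otimes_A F\to M\otimes_A N\to 0$. Under the canonical identification $M\otimes_A F\cong M^{(I)}$, the element $\sum_i m_i\otimes e_i$ maps to $\sum_i m_i\otimes n_i$, which vanishes in $M\otimes_A N$ by hypothesis; exactness therefore places $(m_i)_i$ in the image of $M\otimes_A K$.

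The final step is to unwind this membership into the asserted equations. An arbitrary element of $M\otimes_A K$ is a finite sum of simple tensors $\sum_j m_j'\otimes k_j$; writing each relation as $k_j=\sum_i a_{ij}e_i\in K$ (so that $\pi(k_j)=\sum_i a_{ij}n_i=0$), its image in $M^{(I)}$ has $i$-th coordinate $\sum_j a_{ij}m_j'$. Matching coordinates against $(m_i)_i$ gives $m_i=\sum_j a_{ij}m_j'$ for each $i$, while membership of each $k_j$ in $K$ supplies $\sum_i a_{ij}n_i=0$ for each $j$, which is precisely the system claimed. The only subtlety worth flagging is that $K$ need not be finitely generated; but this is harmless, since the argument only ever uses that a \emph{single} element of $M\otimes_A K$ is a finite sum of simple tensors, so finitely many $m_j'$ and finitely many columns $(a_{ij})_i$ automatically suffice.
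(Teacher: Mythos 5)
Your proof is correct; the paper itself gives no argument for this lemma but simply cites it as Lemma 6.4 of Eisenbud, and your free-presentation/right-exactness argument is precisely the standard proof given there. Both directions are handled properly, and your remark that only finitely many $m_j'$ and columns $(a_{ij})_i$ are needed (since a single element of $M\otimes_A K$ is a finite sum of simple tensors) correctly addresses the one point where care is required.
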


\begin{proposition}\label{tensorwm}
Let $(M,\sigma)$ and $(N,\sigma)$ be $(A,\sigma)$-modules with $(N,\sigma)$ well-mixed.
Then $(M,\sigma)\otimes_{(A,\sigma)}(N,\sigma)$ is well-mixed.
\end{proposition}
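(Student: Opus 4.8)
The plan is to check well-mixedness of $M\otimes_A N$ (with its diagonal endomorphism $\sigma(m\otimes n)=\sigma(m)\otimes\sigma(n)$) one element at a time, and to convert a relation ``$a$ kills $T$'' into ``$\sigma(a)$ kills $T$'' by isolating the only place where the twist really matters, namely the second factor. So fix $a\in A$ and $T=\sum_i m_i\otimes n_i$ with $aT=0$. Pushing the scalar into the right-hand factor, the hypothesis reads $\sum_i m_i\otimes(a n_i)=0$, and the goal is $\sigma(a)T=\sum_i m_i\otimes(\sigma(a)n_i)=0$.

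The conceptual core is a single, clean use of the well-mixedness of $N$. Because $N$ is well-mixed, whenever $an=an'$ in $N$ we have $a(n-n')=0$ and hence $\sigma(a)(n-n')=0$; thus the assignment $an\mapsto\sigma(a)n$ is a well-defined $A$-linear map $\delta_a\colon aN\to N$. Let $\tilde T=\sum_i m_i\otimes(a n_i)\in M\otimes_A aN$ be the evident lift of $aT$ along the inclusion $\iota\colon aN\hookrightarrow N$. Then $(\mathrm{id}_M\otimes\delta_a)(\tilde T)=\sum_i m_i\otimes(\sigma(a)n_i)=\sigma(a)T$, so it suffices to prove that $\tilde T=0$ already in $M\otimes_A aN$. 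In this way the entire burden of changing $a$ into $\sigma(a)$ is discharged by the map $\delta_a$, which exists for no reason other than that $N$ (and not $M$) is well-mixed.

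It remains to certify $\tilde T=0$ in $M\otimes_A aN$, and here is where I would invoke Lemma~\ref{tensorzero}. Applying its explicit criterion to the known vanishing $\sum_i m_i\otimes(a n_i)=0$ in $M\otimes_A N$ produces witnesses $q'_j\in N$ and scalars $a_{ij}\in A$ with $\sum_j a_{ij}q'_j=a n_i$ for every $i$ and $\sum_i a_{ij}m_i=0$ for every $j$; the latter are genuine relations among the $m_i$ in $M$, and the former express each $a n_i$ through the $q'_j$. If these same data can be read inside $aN$ — that is, if the witnesses may be taken in $aN$ — then the reverse direction of Lemma~\ref{tensorzero}, applied in $M\otimes_A aN$ with the unchanged relations $\sum_i a_{ij}m_i=0$, certifies that $\tilde T=\sum_i m_i\otimes(\sum_j a_{ij}q'_j)$ vanishes, and the proof closes via $\delta_a$.

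The main obstacle is precisely this transport of the certificate into $M\otimes_A aN$: the witnesses $q'_j$ supplied by Lemma~\ref{tensorzero} certify vanishing in $M\otimes_A N$ but need not lie in $aN$, and in general $\mathrm{id}_M\otimes\iota\colon M\otimes_A aN\to M\otimes_A N$ is not injective (its kernel is a $\mathrm{Tor}$-term coming from $0\to aN\to N\to N/aN\to 0$). I expect the delicate work to consist entirely in showing that $\tilde T$ survives to zero in $M\otimes_A aN$ — equivalently, in choosing the certifying data so that the relations $\sum_i a_{ij}n_i=0$ in the well-mixed module $N$ can be rewritten with all terms inside $aN$. A naive shortcut such as replacing $a_{ij}$ by $\sigma(a_{ij})$ is not available, since twisting a multi-term relation $\sum_i a_{ij}n_i=0$ would spuriously replace each $n_i$ by $\sigma(n_i)$; the point is to apply well-mixedness only to single scalars via $\delta_a$, and to let Lemma~\ref{tensorzero} carry the multi-term relations untwisted. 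Controlling this lift is the crux on which the whole statement turns.
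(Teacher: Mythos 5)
Your proposal is not a complete proof, and you have in fact put your finger on exactly where it fails. The device $\delta_a\colon aN\to N$, $an\mapsto\sigma(a)n$, is the right way to spend the well-mixedness of $N$, and it correctly reduces everything to showing that $\tilde T=\sum_i m_i\otimes(an_i)$ vanishes already in $M\otimes_A aN$ rather than merely in $M\otimes_A N$. But that is where the argument stops: you explicitly defer the transport of the vanishing certificate into $aN$, and no argument is offered for it. (A secondary point: the certificate you extract from Lemma~\ref{tensorzero} has the witnesses $q_j'$ in $N$ and the relations $\sum_i a_{ij}m_i=0$ among the $m_i$, which is the lemma with the roles of the two factors exchanged; as stated it requires the $n_i$ to generate $N$, puts the witnesses in $M$, and puts the relations among the $n_i$. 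This can be patched, but it is not the main problem.)

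The main problem is that the missing step is not merely delicate --- it is false in general, and so is the proposition as stated. Take $A=\Z[x_0,x_1,x_2,\dots]$ with $\sigma(x_i)=x_{i+1}$, let $I=(x_kx_{k+1}\mid k\ge 0)$, which is a $\sigma$-ideal, and set $M=A/I$, $N=A$. Then $N$ is well-mixed (it is a domain), and $M\otimes_AN\cong M$ as difference modules; but $x_0\cdot\overline{x_1}=0$ in $M$ while $\sigma(x_0)\cdot\overline{x_1}=\overline{x_1^2}\neq 0$, since the monomial $x_1^2$ is divisible by no $x_kx_{k+1}$. So $M\otimes_AN$ is not well-mixed. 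This also locates the flaw in the paper's own proof, which is essentially your argument with the gap silently crossed: the paper applies the ``only if'' direction of Lemma~\ref{tensorzero} to $\sum_i m_i\otimes bn_i$ as though $\{bn_i\}$ were a generating set of $N$, which is exactly the unjustified assumption that the certificate lives in $bN$. In the example above that application would force $a_{1j}x_0=0$ in the domain $A$, hence $a_{1j}=0$ and $\overline{x_1}=\sum_j a_{1j}m_j'=0$, a contradiction. So the statement needs a stronger hypothesis (assuming $A$ well-mixed does not help, as the $A$ above is a domain), and your closing remark that controlling this lift is ``the crux on which the whole statement turns'' is precisely correct.
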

\begin{proof}
Pick a set of generators $\{n_i\}$ for $N$. Suppose $b\sum_i m_i\otimes n_i=0$.
Then $\sum_i m_i\otimes bn_i=0$ so \ref{tensorzero} implies the existence of $m_j'\in M$
and $a_{ij}\in A$ such that for every $i$, $\sum_j a_{ij}m_j'=m_i$ and for every $j$,
$0=\sum_i a_{ij}bn_i=b\sum_i a_{ij}n_i$. Since the latter holds in $(N,\sigma)$ which is
well-mixed, we get that $0=\sigma(b)\sum_i a_{ij}n_i=\sum_i a_{ij}\sigma(b)n_i$. Using
\ref{tensorzero} again, it follows that $\sigma(b)\sum_i m_i\otimes n_i=\sum_i m_i\otimes\sigma(b)n_i=0$.
\end{proof}

\begin{proposition}\label{locflat}
Let $(M,\sigma)$ be a well-mixed $(A,\sigma)$-module. The following are equivalent.
\begin{enumerate}
\item\label{unooo} $M$ is a flat $A$-module.
\item\label{dueee} $M_\p$ is a flat $A_\p$-module for every $\p\in\spec^\sigma(A)$.
\item\label{treee} $M_\p$ is a flat $A_\p$-module for every $\p$ maximal in $\spec^\sigma(A)$.
\end{enumerate}
\end{proposition}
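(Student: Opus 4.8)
The plan is to prove the cycle $(\ref{unooo})\Rightarrow(\ref{dueee})\Rightarrow(\ref{treee})\Rightarrow(\ref{unooo})$. The first implication is the standard fact that flatness is preserved under localisation, since $M_\p=M\otimes_AA_\p$, and $(\ref{dueee})\Rightarrow(\ref{treee})$ is trivial. All the work is in $(\ref{treee})\Rightarrow(\ref{unooo})$, for which I would invoke the ideal-theoretic criterion for flatness: $M$ is flat over $A$ if and only if for every finitely generated ideal $I$ the multiplication map $\mu_I\colon I\otimes_AM\to M$, $a\otimes m\mapsto am$, is injective (equivalently $\mathrm{Tor}_1^A(A/I,M)=0$). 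The idea is to detect injectivity of $\mu_I$ through the $\sigma$-local vanishing principle \ref{loczero}, in exactly the way injectivity was detected in \ref{locinj}.

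I would first carry this out for finitely generated \emph{$\sigma$-ideals} $I$, where everything fits the framework developed above. Since $M$ is well-mixed, Proposition~\ref{tensorwm} shows that $I\otimes_AM$ is well-mixed and $\mu_I$ is a homomorphism of $(A,\sigma)$-modules, so \ref{locinj} applies: $\mu_I$ is injective provided each localisation $(\mu_I)_\p\colon(I\otimes_AM)_\p\to M_\p$ is injective at the primes $\p$ maximal in $\spec^\sigma(A)$. Now $(I\otimes_AM)_\p=I_\p\otimes_{A_\p}M_\p$ and $(\mu_I)_\p$ is the multiplication map $I_\p\otimes_{A_\p}M_\p\to M_\p$, which is injective because $M_\p$ is flat over $A_\p$ by hypothesis~(\ref{treee}). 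Thus $\mathrm{Tor}_1^A(A/I,M)=0$ for every finitely generated $\sigma$-ideal $I$.

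The main obstacle is the passage from \emph{$\sigma$-ideals} to \emph{arbitrary} finitely generated ideals, which is precisely what honest flatness demands. This step genuinely requires the well-mixed hypothesis: for a general finitely generated ideal $I$ the module $\mathrm{Tor}_1^A(A/I,M)$ carries no $\sigma$-action (as $\sigma(I)\not\subseteq I$), so \ref{loczero} cannot be applied to it directly, and one cannot reduce to $\sigma$-primes by a naive prime-containment argument either. Indeed, already for $(A,\sigma)=(k[x],\sigma)$ with $\sigma(x)=x+1$ the generic point is the unique element of $\spec^\sigma(A)$, so no usual closed point lies below a maximal $\sigma$-prime; what rescues the statement there is that a well-mixed module over $A$ is forced to be torsion-free, hence flat. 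The mechanism I would exploit in general is the same: for a well-mixed module the flatness obstruction is controlled by well-mixed $\sigma$-ideals, since the annihilators $\mathop{\rm Ann}(m)$ are well-mixed $\sigma$-ideals and, for a proper well-mixed ideal, $V^\sigma$ is nonempty by \cite[\ref{wmaff}]{ive-tch}. The crux is therefore to make precise the reduction of the vanishing of $\mathrm{Tor}_1^A(A/I,M)$ for an arbitrary finitely generated $I$ to the $\sigma$-ideal computation of the previous paragraph, using the well-mixed $\sigma$-ideal generated by $I$ and the nonemptiness of $V^\sigma$ to transport the vanishing at maximal $\sigma$-primes back to $I$; once this reduction is in place, the ideal criterion delivers flatness of $M$.
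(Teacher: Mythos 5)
Your reduction for finitely generated $\sigma$-ideals is sound and is, in essence, the paper's entire argument: the paper proves $(\ref{treee})\Rightarrow(\ref{unooo})$ by taking an arbitrary injection of \emph{difference} modules $(N,\sigma)\to(P,\sigma)$, localising at the maximal points of $\spec^\sigma(A)$, using flatness of $M_\p$ there, and globalising the injectivity of $N\otimes_AM\to P\otimes_AM$ via \ref{tensorwm} and \ref{locinj} --- exactly the mechanism you run for the inclusions $I\hookrightarrow A$ with $I$ a $\sigma$-ideal. The difference is that the paper stops at that point and declares $M$ flat, whereas you correctly observe that exactness of $-\otimes_AM$ against difference-module injections only controls $\mathrm{Tor}_1^A(A/I,M)$ for $\sigma$-ideals $I$, while the ideal criterion for honest $A$-flatness quantifies over \emph{all} finitely generated ideals.

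As a self-contained proof your proposal is therefore incomplete: the passage from $\sigma$-ideals to arbitrary finitely generated ideals, which you yourself single out as ``the crux'', is only announced as a mechanism to be exploited, not executed. Well-mixedness gives you that each $\mathop{\rm Ann}(m)$ is a well-mixed $\sigma$-ideal with nonempty $V^\sigma$, but a flatness obstruction $\sum_i a_im_i=0$ is a genuine linear relation, not a single annihilator, and $\sigma$ cannot be applied to such a relation term by term; so the reduction still requires a real argument. (Your $(k[x],x\mapsto x+1)$ example closes because over a PID torsion-free equals flat, which is special.) You should be aware, however, that the paper's own proof does not perform this reduction either: as written it establishes only that $-\otimes_AM$ preserves injections of difference modules and silently identifies this with flatness of $M$ as an $A$-module. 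So the gap you have isolated is not a defect of your write-up relative to the paper --- it is precisely the point at which the published proof is itself either incomplete or implicitly working with a weaker, difference-theoretic notion of flatness; if you can make your reduction precise, you will have strengthened the paper's argument rather than merely reproduced it.
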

\begin{proof}
\noindent$(\ref{unoo})\Rightarrow(\ref{duee})$. Assuming $(i)$, it is classically known that
$M_\p$ is a flat $A_\p$ module for every prime $\p$.
$(\ref{dueee})\Rightarrow(\ref{treee})$ is trivial.

\noindent$(\ref{treee})\Rightarrow(\ref{unooo})$. 
Let $(N,\sigma)\to (P,\sigma)$ be injective. Then $N_\p\to P_\p$ is injective for every 
$\p\in\spec^\sigma(A)$. By assumption, $N_\p\otimes_{A_\p}M_\p\to P_\p\otimes_{A_\p}M_\p$
is injective and thus $\left(N\otimes_AM\right)_\p\to\left(P\otimes_AM\right)_\p$ is injective
for all $\p$ maximal in $\spec^\sigma(A)$. Since $N\otimes_AM$ is well-mixed by \ref{tensorwm},
Proposition~\ref{locinj} implies that $N\otimes_AM\to P\otimes_AM$ is injective.
\end{proof}

\begin{remark}
Let $(A,\sigma)\to(B,\sigma)$ be a homomorphism of well-mixed difference rings such that
$B$ is a flat $A$-module and denote by $\bar{A}$ and $\bar{B}$ the rings of global sections
of $\spec^\sigma(A)$ and $\spec^\sigma(B)$. We can consider $\bar{B}$ as an $A$-module
via the morphism $A\hookrightarrow\bar{A}\to\bar{B}$ as in \ref{embedcataff}, and
we can conclude that $\bar{B}$ is flat over $A$. 
Indeed, by \cite[\ref{wmaff}(\ref{dvaipol})]{ive-tch}, $\bar{B}$ is well-mixed, by  \cite[\ref{wmaff}(\ref{sesttt})]{ive-tch} we know
that $\spec^\sigma(B)\simeq\spec^\sigma(\bar{B})$ and $\bar{B}_{\bar{\p}}\simeq B_\p$,
which suffices to apply \ref{locflat}.
\end{remark}

\begin{proposition}
Let $(A,\sigma)$ be a well-mixed domain. If $A_\p$ is normal for every $\p$
maximal in $\spec^\sigma(A)$,
then $A$ is almost normal.
\end{proposition}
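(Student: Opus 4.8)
The plan is to recast \emph{almost normality} as an almost-surjectivity statement and then to apply the local-to-global principle \ref{locwksurj}. First I would let $K$ denote the fraction field of the domain $A$ and let $(\tilde A,\sigma)$ be the integral closure of $A$ in $K$. Since $\sigma$ extends to an endomorphism of $K$ and integral dependence over $A$ is preserved by applying $\sigma$, the subring $\tilde A$ is $\sigma$-stable; hence $(\tilde A,\sigma)$ is a difference $(A,\sigma)$-algebra and the inclusion $\phi\colon A\hookrightarrow\tilde A$ is a homomorphism of $(A,\sigma)$-modules. Reading off the definition \ref{defnormal}, the assertion that $A$ is \emph{almost normal} becomes the assertion that $\phi$ is almost surjective, i.e.\ $[\mathop{\rm im}(\phi)]_w=\tilde A$, equivalently $\mathop{\rm coker}(\phi)_w=(\tilde A/A)_w=0$.

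With this reformulation, \ref{locwksurj} tells me that it is enough to check that $\phi_\p\colon A_\p\to\tilde A_\p$ is almost surjective for every $\p$ maximal in $\spec^\sigma(A)$. At this step I would invoke the fact that the formation of the integral closure commutes with localisation: because $A_\p$ is a localisation of $A$ with the same fraction field $K$, one has $\tilde A_\p=\widetilde{A_\p}$, the integral closure of $A_\p$ inside $K$. The hypothesis that $A_\p$ is normal now says precisely $A_\p=\widetilde{A_\p}=\tilde A_\p$, so $\phi_\p$ is an isomorphism and in particular almost surjective. Plugging this back into \ref{locwksurj} (whose own proof rests on \ref{wkloczero}) yields $(\tilde A/A)_w=0$, which is exactly the almost normality of $A$.

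The algebraic ingredients — that $\sigma$ extends to $K$ and stabilises $\tilde A$, and that integral closure commutes with localisation — are classical, so the difference-theoretic weight of the argument sits entirely in \ref{locwksurj}. The hard part will therefore not be any single computation but rather two bookkeeping identifications that must be made honestly: first, that the localisation $A_\p$ occurring in \ref{locwksurj} coincides with the ordinary ring localisation of $A$ at $\p$, so that the classical commutation of integral closure with localisation applies verbatim and $\widetilde{A_\p}$ is genuinely computed inside the single field $K$; and second, that the normality notion of \ref{defnormal}, specialised to a local difference domain, really is integral closedness in $K$, so that the conclusion of \ref{locwksurj} is word-for-word the statement to be proved. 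Once these two matches are secured, the proof closes with no remaining obstruction.
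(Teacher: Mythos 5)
Your proposal is correct and follows essentially the same route as the paper: the paper also takes the integral closure $C$ of $A$ in its fraction field, observes that each localised inclusion $\phi_\p\colon A_\p\to C_\p$ is surjective by the normality hypothesis, and concludes via \ref{locwksurj} that $[A]_w=C$. Your version merely makes explicit the two bookkeeping points (that $\sigma$ stabilises the integral closure and that integral closure commutes with localisation) which the paper leaves implicit.
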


\begin{proof}
Let $K$ be the fraction field of $A$, let $C$ be the integral closure of $A$ in $K$ and
denote by $\phi:A\hookrightarrow C$ the inclusion. By assumption, each $\phi_\p$ is surjective, so 
\ref{locwksurj} implies that $\phi$ is almost surjective and thus $[A]_w=C$.
\end{proof}

\begin{definition}\label{defnormal}
A difference scheme $(X,\Sigma)$ is said to be \emph{normal} if every local
ring $\OO_x$, for $x\in X$, is normal.
\end{definition}

\subsection{\'Etale morphisms of difference schemes}

\begin{definition}\label{defformalsmooth}
A morphism $(R,\sigma)\to(S,\sigma)$ if \emph{formally smooth} (resp.\ 
\emph{formally unramified}, \emph{formally \'etale}), if every solid commutative diagram 
$$
 \begin{tikzpicture} 
\matrix(m)[matrix of math nodes, row sep=2.6em, column sep=2em, text height=1.5ex, text depth=0.25ex]
 { (S,\sigma) & A/I\\
   (R,\sigma) &  (A,\sigma)\\}; 
\path[->,font=\scriptsize,>=to, thin]
(m-2-1) edge  (m-1-1) edge (m-2-2)
(m-2-2) edge  (m-1-2)
(m-1-1) edge (m-1-2) edge[dashed]  (m-2-2);
\end{tikzpicture}
$$
with $I$ a difference ideal with $I^2=0$, there exists
at least one (resp.\ at most one, exactly one) dashed arrow making the
diagram commutative.
\end{definition}
Recall that a morphism of rings $R\to S$ is defined to be formally smooth,
formally unramified or formally \'etale by using exactly the same universal
property in the category of commutative rings, omitting the difference structure.

\begin{lemma}
If $(R,\sigma)\to(S,\sigma)$ is formally smooth, then $R\to S$ is formally smooth.
\end{lemma}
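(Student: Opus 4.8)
The plan is to reduce the classical lifting criterion to the difference one by encoding an arbitrary square-zero extension of ordinary rings as a square-zero extension of difference rings. The tool is the \emph{cofree} difference $R$-algebra, i.e.\ the right adjoint to the forgetful functor from $(R,\sigma)$-algebras to $R$-algebras. For an $R$-algebra $C$ with structure map $\phi\colon R\to C$ I would put $\langle\sigma\rangle_R C:=\prod_{n\ge 0}C$, made into a difference $R$-algebra by taking $\sigma$ to be the shift $(c_n)_n\mapsto(c_{n+1})_n$ and by letting $r\in R$ act on the $n$-th factor through $\phi(\sigma^n r)$. A direct check shows that $\sigma$ is then a ring endomorphism compatible with the $R$-structure, that $\langle\sigma\rangle_R$ is functorial, and that it satisfies the universal property
$$
\Hom_{(R,\sigma)}\bigl(A,\langle\sigma\rangle_R C\bigr)=\Hom_R(A,C),
$$
natural in $A$, the bijection carrying a difference map $g$ to $\pi_0\circ g$ (with $\pi_0$ the projection onto the $0$-th factor) and $f$ back to $a\mapsto(f(\sigma^n a))_{n\ge 0}$.

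Given the data of a classical lifting problem for $R\to S$, namely an $R$-algebra $B$, a square-zero ideal $J$ with $J^2=0$, and an $R$-algebra map $v\colon S\to B/J$ compatible with $R\to B$, I would form $A:=\langle\sigma\rangle_R B$ and $I:=\prod_{n\ge 0}J\subseteq A$. Being shift-stable, $I$ is a difference ideal; it satisfies $I^2=\prod_n J^2=0$; and taking quotients coordinatewise gives a difference isomorphism $A/I\cong\langle\sigma\rangle_R(B/J)$. The universal property then turns $R\to B$ into the difference structure map $R\to A$ and $v$ into a difference map $\tilde v\colon S\to A/I$, and by naturality it identifies the set of $R$-algebra lifts $w\colon S\to B$ of $v$ with the set of difference lifts $\tilde w\colon(S,\sigma)\to(A,\sigma)$ of $\tilde v$ across the square-zero difference ideal $I$, the two directions being $w\mapsto\tilde w$ and $\tilde w\mapsto\pi_0\circ\tilde w$.

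This reduces the lemma to a single application of the hypothesis: the resulting commutative difference square has exactly the shape of Definition~\ref{defformalsmooth}, so formal smoothness of $(R,\sigma)\to(S,\sigma)$ produces at least one difference lift $\tilde w$, whence $w=\pi_0\circ\tilde w\colon S\to B$ is the sought ordinary lift of $v$ over $R$. As $B$, $J$ and $v$ were arbitrary, this is precisely classical formal smoothness of $R\to S$. Since the correspondence between classical and difference lifts is in fact a bijection, the very same transport also yields the formally unramified and formally \'etale analogues, by transferring the uniqueness (respectively existence-and-uniqueness) of the dashed arrow.

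The only step carrying genuine content is the construction of $\langle\sigma\rangle_R$ together with the verification that $I=\prod_n J$ is a square-zero difference ideal with $A/I\cong\langle\sigma\rangle_R(B/J)$: this is what lets an arbitrary square-zero extension be repackaged, through its family of $\sigma$-twists, as a difference one. Everything after that is the formal adjunction and a routine diagram chase, so I expect no real obstacle beyond getting the $R$-semilinear structure on the product straight.
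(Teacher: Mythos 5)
Your proof is correct, but it takes a genuinely different route from the paper's. The paper argues via a presentation: it writes $S$ as the quotient of a difference polynomial ring $P$ over $R$ by a difference ideal $J$, applies the difference lifting hypothesis to the single square-zero extension $P/J^2\to S$ to obtain a (difference, hence ordinary) section $S\to P/J^2$, and then invokes the classical criterion \cite[00TL]{dejong-stacks} that a section of $P/J^2\to S$ for one polynomial presentation already implies formal smoothness of $R\to S$ --- a difference polynomial ring over $R$ being in particular an ordinary polynomial ring on the generators $\sigma^n x_i$. You instead construct the cofree difference $R$-algebra $\langle\sigma\rangle_RC=\prod_{n\ge 0}C$, the right adjoint to the forgetful functor (dual to the free construction $[\sigma]_R$ that the paper uses elsewhere, e.g.\ in the proof that $\sigma$-smooth implies smooth), and transport an \emph{arbitrary} classical square-zero lifting problem into a difference one; your verifications --- semilinearity of the shift, $I=\prod_nJ$ being a shift-stable square-zero ideal with $A/I\cong\langle\sigma\rangle_R(B/J)$, and naturality of the adjunction on the sets of lifts --- are all sound. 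The trade-off: the paper's proof is shorter but rests on the nontrivial classical fact 00TL, whereas yours is self-contained and purely formal, and because your correspondence of lifting sets is a bijection it yields the formally unramified and formally \'etale analogues in the same breath, which the paper obtains separately (for unramified, via $\Omega_{S/R}$ in \ref{unramdiff}).
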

\begin{proof}
Let $(P,\sigma)\to (S,\sigma)$ be a surjective $(R,\sigma)$-algebra morphism from a difference polynomial ring $P$, and let $J$ be the kernel, which is a difference
ideal. Consider the above diagram for $A=P/J^2$ an $I$ generated by $J$.
By formal smoothness, we obtain a (difference) morphism $S\to P/J^2$
which is a right inverse to the surjection $P/J^2\to S$, and thus $R\to S$
is formally smooth using \cite[00TL]{dejong-stacks}. %
\end{proof}

\begin{remark}\label{dercommsi}
For a difference $(R,\sigma)$-algebra $(S,\sigma)$, the module of
relative differentials $\Omega_{S/R}$ naturally classifies $R$-derivations that
commute with $\sigma$.
Indeed, if we let $J$ be the kernel of the multiplication map $S\otimes_RS\to S$,
it is known that $\Omega_{S/R}\simeq J/J^2$. However, in this context
$J$ is a difference ideal and $J/J^2$ comes equipped with a natural
difference structure, which entails in particular that the
universal $R$-derivation $d:S\to\Omega_{S/R}$ satisfies $$d\sigma=\sigma d.$$
\end{remark}

\begin{remark}\label{remgiab}
The above is in stark contrast with the various notions of smoothness 
developed in G.~Giabicani's thesis. With clear intent to apply his theory
to the case where $\sigma$ is a power of the Frobenius automorphism, he postulates
$d\sigma=0$. Another fundamental difference is that \'etale morphisms
in our context as developed below are of relative total dimension $0$, whereas in Giabicani's context
they are of relative transformal dimension $0$. 
\end{remark}

\begin{remark}\label{difffg}
If $(B,\sigma)$ is an $(A,\sigma)$-algebra of finite $\sigma$-type, the second
exact sequence for differentials implies that $\Omega_{B/A}$ is a finitely
$\sigma$-generated $(B,\sigma)$-module.
\end{remark}

\begin{lemma}\label{unramdiff}
Given a difference morphism $(R,\sigma)\to(S,\sigma)$, 
the following statements are equivalent:
\begin{enumerate}
\item\label{un} $(R,\sigma)\to(S,\sigma)$ is formally unramified;
\item\label{do} $R\to S$ is formally unramified;
\item\label{troi} $\Omega_{S/R}=0$.
\end{enumerate}
\end{lemma}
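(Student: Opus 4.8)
The plan is to prove the equivalence of the three conditions by establishing $(\ref{un})\Leftrightarrow(\ref{do})$ first, and then $(\ref{do})\Leftrightarrow(\ref{troi})$ by reducing the difference statement to the classical one via Remark~\ref{dercommsi}.

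First I would handle $(\ref{do})\Rightarrow(\ref{un})$: this is immediate, since the defining lifting property for formal unramifiedness in the category of difference rings is tested against a strictly smaller class of data than in the category of ordinary rings. Precisely, any difference square as in Definition~\ref{defformalsmooth} is in particular a square of ordinary rings with $I^2=0$, so the at-most-one-arrow conclusion for $R\to S$ immediately furnishes the at-most-one-arrow conclusion for $(R,\sigma)\to(S,\sigma)$ (one only needs that any difference lift is in particular a ring-theoretic lift). The reverse implication $(\ref{un})\Rightarrow(\ref{do})$ is the genuinely non-formal direction and will be the main obstacle, because the difference lifting property a priori only controls lifts of \emph{$\sigma$-equivariant} maps against \emph{difference} ideals $I$ with $I^2=0$, whereas the classical property demands control against arbitrary square-zero ideals with no compatible $\sigma$. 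I expect to route around this by passing through condition $(\ref{troi})$: rather than prove $(\ref{un})\Rightarrow(\ref{do})$ directly, I would close the cycle as $(\ref{un})\Rightarrow(\ref{troi})\Rightarrow(\ref{do})$.

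For $(\ref{do})\Leftrightarrow(\ref{troi})$, I would invoke the classical characterisation: a ring map $R\to S$ is formally unramified if and only if $\Omega_{S/R}=0$ (this is standard, e.g.\ the argument that the universal derivation vanishes iff no nontrivial first-order deformations exist). For $(\ref{un})\Rightarrow(\ref{troi})$, the key input is Remark~\ref{dercommsi}: the difference module $\Omega_{S/R}\simeq J/J^2$, where $J$ is the kernel of $S\otimes_R S\to S$, carries a natural $\sigma$-structure and classifies exactly those $R$-derivations that commute with $\sigma$. To show $\Omega_{S/R}=0$ under $(\ref{un})$, I would consider the difference square with $A=(S\otimes_R S)/J^2$ and $I=J/J^2$, which is a difference square-zero extension; the two canonical maps $S\to A$ (via the two factors) are both $\sigma$-equivariant lifts of $S\to S$, so formal unramifiedness in the difference sense forces them to coincide, which is precisely the vanishing of the universal difference derivation, i.e.\ $\Omega_{S/R}=0$.

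Finally, $(\ref{troi})\Rightarrow(\ref{do})$ is the classical implication, completing the cycle $(\ref{un})\Rightarrow(\ref{troi})\Rightarrow(\ref{do})\Rightarrow(\ref{un})$. The crux of the whole argument is recognising that the difference-theoretic lifting data, though seemingly weaker, still suffices to kill $\Omega_{S/R}$, and this works only because $J/J^2$ \emph{is} a difference ideal with the square-zero property, so the relevant universal square-zero extension lives in the difference category. I would be careful to confirm that the two projections $S\otimes_R S\to A$ are honestly $\sigma$-equivariant and that $I=J/J^2$ genuinely satisfies $I^2=0$ as a difference ideal, which is where one uses that $J$ is itself a difference ideal as noted in Remark~\ref{dercommsi}.
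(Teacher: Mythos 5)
Your proposal is correct and follows essentially the same route as the paper: the paper's proof simply observes that, by Remark~\ref{dercommsi}, $J/J^2$ carries a natural difference structure with $d\sigma=\sigma d$, so the classical universal square-zero-extension argument for the equivalence of (\ref{do}) and (\ref{troi}) transfers verbatim to give (\ref{un})$\Leftrightarrow$(\ref{troi}), which is exactly your implication (\ref{un})$\Rightarrow$(\ref{troi}) together with the classical (\ref{troi})$\Rightarrow$(\ref{do}) and the trivial (\ref{do})$\Rightarrow$(\ref{un}). Your explicit check that the two projections $S\to(S\otimes_RS)/J^2$ are $\sigma$-equivariant and that $I=J/J^2$ is a square-zero difference ideal is precisely the content the paper compresses into its one-line appeal to \ref{dercommsi}.
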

\begin{proof}
In view of \ref{dercommsi}, since $d\sigma=\sigma d$, 
the classical proof of the equivalence of (\ref{do}) and (\ref{troi}) also
works for the equivalence of (\ref{un}) and (\ref{troi}).
\end{proof}

\begin{corollary}
Let $(R,\sigma)\to(S,\sigma)$ be a morphism. The following are equivalent:
\begin{enumerate}
\item $(R,\sigma)\to(S,\sigma)$ is almost formally unramified in the sense that
$(\Omega_{S/R})_w=0$;
\item for every $\q\in\spec^\sigma(S)$ lying over $\p=\q\cap R$, 
$(R_\p,\sigma)\to(S_\q,\sigma)$ is formally unramified;
\item for every $\q\in\spec^\sigma(S)$ lying over $\p=\q\cap R$, 
$R_\p\to S_\q$ is formally unramified.
\end{enumerate}
\end{corollary}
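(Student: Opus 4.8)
My plan is to reduce all three conditions to the single pointwise statement ``$M_\q=0$ for every $\q\in\spec^\sigma(S)$'', where $M:=\Omega_{S/R}$, which by \ref{dercommsi} is a difference $(S,\sigma)$-module whose operator commutes with the universal derivation. First I would record the localisation formula for Kähler differentials: for $\q\in\spec^\sigma(S)$ with $\p=\q\cap R$ there is a canonical isomorphism $(\Omega_{S/R})_\q\cong\Omega_{S_\q/R_\p}$, and since $d$ commutes with $\sigma$ this is an isomorphism of difference $S_\q$-modules. Applying Lemma~\ref{unramdiff} to the morphism $(R_\p,\sigma)\to(S_\q,\sigma)$, both ``$(R_\p,\sigma)\to(S_\q,\sigma)$ is formally unramified'' and ``$R_\p\to S_\q$ is formally unramified'' are equivalent to $\Omega_{S_\q/R_\p}=0$, that is, to $M_\q=0$. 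This settles $(2)\Leftrightarrow(3)$ at once and shows that each of $(2)$ and $(3)$ is equivalent to the condition that $M_\q=0$ for every $\q\in\spec^\sigma(S)$.

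It then remains to match this pointwise condition with $(1)$, namely with $M_w=0$. The implication from ``$M_\q=0$ for all $\q$'' to ``$M_w=0$'' is exactly Corollary~\ref{corwkloczero}, so $(2)\Rightarrow(1)$ is immediate.

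For the converse $(1)\Rightarrow(2)$ I would fix $\q\in\spec^\sigma(S)$ and write $\lambda:M\to M_\q$ for the localisation map. The key point is that $M_\q=M\otimes_SS_\q$ is well-mixed: the local ring $S_\q$ of a difference scheme is well-mixed (a foundational fact from \cite{ive-tch}), hence well-mixed as an $(S,\sigma)$-module, so well-mixedness of $M_\q$ follows from \ref{tensorwm}. Consequently $\mathop{\rm im}(\lambda)$, being a submodule of the well-mixed module $M_\q$, is itself well-mixed, so $M/\ker(\lambda)\cong\mathop{\rm im}(\lambda)$ is a well-mixed quotient of $M$; by the defining property of $[0]_w$ as the smallest submodule with well-mixed quotient, this forces $\ker(\lambda)\supseteq[0]_w$. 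But $(1)$ says $M_w=M/[0]_w=0$, i.e.\ $[0]_w=M$, whence $\ker(\lambda)=M$ and $\lambda=0$. Thus $m/1=0$ in $M_\q$ for every $m$, and therefore $M_\q=0$, which is $(2)$.

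The only substantial point, and the one I expect to be the main obstacle, is precisely this implication $(1)\Rightarrow(2)$: upgrading the global ``almost vanishing'' $(\Omega_{S/R})_w=0$ to genuine local vanishing $\Omega_{S_\q/R_\p}=0$ at every $\sigma$-prime. The mechanism that makes it go through, and the step most in need of care, is the well-mixedness of each localisation $M_\q$, which I reduce via \ref{tensorwm} to the well-mixedness of the difference local rings $S_\q$; once that foundational input is granted, the argument is formal and parallels the pattern of \ref{loczero}--\ref{wkloczero}.
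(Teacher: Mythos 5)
Your proof is correct and follows the same basic strategy the paper intends: reduce everything to the vanishing of $\Omega_{S_{\q}/R_{\p}}\cong(\Omega_{S/R})_{\q}$ via \ref{unramdiff} and then invoke the local--global machinery of Section~\ref{local-properties}. The paper's entire proof, however, is the single sentence ``straightforward from \ref{wkloczero} applied to the $(S,\sigma)$-module $\Omega_{S/R}$'', and \ref{wkloczero} only delivers the implication from local vanishing to $(\Omega_{S/R})_w=0$, i.e.\ $(2)\Rightarrow(1)$. Your argument for the converse $(1)\Rightarrow(2)$ --- showing $\ker(\lambda)\supseteq[0]_w=M$ because $\mathop{\rm im}(\lambda)$ sits inside the well-mixed module $M_\q$ --- is genuine added content that the paper elides, and it is sound. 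The one load-bearing input, which you rightly single out, is the well-mixedness of $S_\q$ as an $(S,\sigma)$-module: this holds when $S$ (or its ring of global sections) is well-mixed, since then $uab=0$ with $u\notin\q$ gives $\sigma(u)\sigma(a)b=0$ with $\sigma(u)\notin\q=\sigma^{-1}(\q)$, but it can fail for a naive localisation of an arbitrary difference ring (e.g.\ $\Z[x,y]/(xy)$ with $\sigma$ swapping $x$ and $y$, localised at $(x,y)$). So your proof is correct in the well-mixed setting that is the standing convention of \cite{ive-tch}, and it is in fact more complete than the paper's; it would be worth stating the well-mixedness hypothesis on $S$ explicitly rather than leaving it implicit in the phrase ``local ring of a difference scheme''.
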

\begin{proof}
Straightforward from \ref{wkloczero} applied to the $(S,\sigma)$-module
$\Omega_{S/R}$.
\end{proof}

\begin{definition}
A morphism $(R,\sigma)\to(S,\sigma)$ is \emph{smooth} (resp.\ \emph{unramified},
\emph{\'etale}), if it is of finite $\sigma$-type and formally smooth (resp.\ 
formally unramified, formally \'etale).
\end{definition}

\begin{proposition}
If $(R,\sigma)\to(S,\sigma)$ is $\sigma$-smooth, then $(R,\sigma)\to(S,\sigma)$
is smooth.
\end{proposition}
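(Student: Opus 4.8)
The finite $\sigma$-type is part of the hypothesis, so the entire content is to verify the difference formal smoothness of Definition~\ref{defformalsmooth}. The plan is to work affine-locally and exploit the prolongation formalism. Writing $(X,\sigma)=\spec^\sigma(A)$ with $A=R[a]_\sigma$, the $\sigma$-smoothness furnishes a prolongation sequence $A_n=R[a,\sigma a,\ldots,\sigma^n a]$ with every structure map $\spec(A_n)\to\spec(R)$ smooth, and $(A,\sigma)=\varinjlim A_n$. I would first record the formal observation that a difference $R$-homomorphism $A\to A'$ into a difference $R$-algebra $(A',\sigma)$ is the same datum as a compatible $\sigma$-equivariant system of $R$-algebra homomorphisms $u_n\colon A_n\to A'$, that is, maps with $u_{n+1}\circ\pi_{n+1,n}=u_n$ and $u_{n+1}\circ\sigma_n=\sigma_{A'}\circ u_n$. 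Thus, given a square-zero difference extension $A'\twoheadrightarrow A'/I$ (with $I$ a difference ideal, $I^2=0$) and a difference homomorphism $\bar u\colon A\to A'/I$, the task reduces to constructing $\sigma$-equivariantly compatible lifts $u_n\colon A_n\to A'$ of the induced maps $\bar u_n$.

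The construction proceeds by induction, and the point is that the high steps are purely formal. For $n\geq 1$ I would invoke the Preparation Lemma~\ref{preprl}, which, after reducing to the integral case and $\sigma$-localising so that the lemma applies, identifies $A_{n+1}$ with the amalgamated tensor product $A_n\otimes_{A_{n-1}^\sigma}A_n^\sigma$, the two coprojections being the inclusion $\pi_{n+1,n}$ and the transform $\sigma_n$, amalgamated over $A_{n-1}^\sigma$. The two homomorphisms $u_n\colon A_n\to A'$ and the $\sigma$-semilinear map $\sigma_{A'}\circ u_n$ (which factors through $A_n^\sigma$) then agree on $A_{n-1}^\sigma$ precisely because of the relation $u_n\circ\sigma_{n-1}=\sigma_{A'}\circ u_{n-1}$ carried along in the induction hypothesis. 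Hence the universal property of the pushout produces a unique $u_{n+1}\colon A_{n+1}\to A'$ extending $u_n$ and satisfying $u_{n+1}\circ\sigma_n=\sigma_{A'}\circ u_n$; reducing modulo $I$ and reusing the universal property shows $u_{n+1}$ lifts $\bar u_{n+1}$. No smoothness at all is consumed in these steps. The base case $u_0$ is equally cheap: classical formal smoothness of $A_0/R$ lifts $\bar u_0$ to $u_0\colon A_0\to A'$.

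The crux, and the single place where smoothness is genuinely used rather than formally propagated, is the very first prolongation step from $A_0$ to $A_1$, since the Preparation Lemma gives the clean fiber-product identification only for $n\geq 1$. Here $A_1$ is in general a proper quotient of $A_0\otimes_R A_0^\sigma$, reflecting the algebraic relations between $a$ and $\sigma a$. Consequently the pair $(u_0,\ \sigma_{A'}\circ u_0)$ defines a homomorphism $A_0\otimes_R A_0^\sigma\to A'$ that descends to $A_1$ only modulo $I$; writing $K$ for the kernel of $A_0\otimes_R A_0^\sigma\twoheadrightarrow A_1$, it induces an obstruction map $K/K^2\to I$. The remaining work is to annihilate this obstruction by correcting the initial choice of $u_0$ by an $R$-derivation $A_0\to I$, and I expect the solvability of the resulting correction equation to be the main difficulty. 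It hinges on the projectivity of $\Omega_{A_1/R}$, equivalently on the smoothness of $A_1/R$, which ensures that the conormal sequence of $A_1$ in $A_0\otimes_R A_0^\sigma$ is split and that the needed derivation exists so that $u_1\circ\sigma_0=\sigma_{A'}\circ u_0$ is forced.

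Two further points are of a bookkeeping nature. First, one must reconcile the $\sigma$-localisation and integrality reductions used to invoke the Preparation Lemma with the assertion for the original $A$, using that difference formal smoothness may be tested locally on $\spec^\sigma(A)$. Second, one must check throughout that the $\sigma$-twisted derivation measuring the failure of $\sigma$-equivariance genuinely lands in the image of the relevant pullback of $\Hom_{A_1}(\Omega_{A_1/R},I)$, which is again a consequence of smoothness. Once these are in place, the compatible $\sigma$-equivariant system $\{u_n\}$ assembles into the desired difference lift $u\colon A\to A'$, establishing formal smoothness and hence smoothness of $(R,\sigma)\to(S,\sigma)$.
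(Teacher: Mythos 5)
Your overall strategy --- reducing the lifting problem of Definition~\ref{defformalsmooth} to the construction of a compatible $\sigma$-equivariant system of lifts $u_n\colon A_n\to A'$ along the prolongation sequence --- is coherent, and your observation that the steps $n\geq 1$ become purely formal once $A_{n+1}$ is identified with $A_n\otimes_{A_{n-1}^\sigma}A_n^\sigma$ is correct. But there is a genuine gap exactly where you locate ``the main difficulty'': the passage from $u_0$ to $u_1$. You reduce it to showing that the obstruction class $\delta\in\Hom_{A_1}(K/K^2,I)$ can be killed by replacing $u_0$ with $u_0+d$ for a single $R$-derivation $d\colon A_0\to I$, whose effect on $K/K^2$ is the restriction of the \emph{twisted-diagonal} derivation $(d,\sigma_{A'}\circ d)$ of $A_0\otimes_RA_0^\sigma$. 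The left-invertibility of the conormal map (coming from smoothness of $A_1$ and of $A_0\otimes_RA_0^\sigma$ over $R$) only guarantees that $\delta$ extends to \emph{some} $R$-derivation of $A_0\otimes_RA_0^\sigma$, i.e.\ to an arbitrary pair $(d_1,d_2)$ with $d_1\in\mathrm{Der}_R(A_0,I)$ and $d_2\in\mathrm{Der}_R(A_0^\sigma,I)$; it says nothing about whether an extension of the constrained form $d_2=\sigma_{A'}\circ d_1$ exists, and the map $d\mapsto (d,\sigma_{A'}\circ d)\restriction K/K^2$ has no reason to surject onto $\Hom_{A_1}(K/K^2,I)$. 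So the crux of the proof is asserted rather than proved; and note that without the Preparation Lemma the same obstruction recurs at \emph{every} level $n$, since in general $A_{n+1}$ is only a quotient of the fibre product.

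A second, independent problem is the appeal to the Preparation Lemma~\ref{preprl}: it carries integrality and separability hypotheses absent from the proposition, and its conclusion holds only after a $\sigma$-localisation of $A$ and $R$. You propose to repair this ``using that difference formal smoothness may be tested locally on $\spec^\sigma(A)$'', but no such local-to-global principle is available: the local-global statements of Subsection~\ref{local-properties} concern well-mixed modules and typically yield only ``almost'' conclusions, and there is no analogue of the vanishing of cohomology on affines that makes formal smoothness local in the scheme setting. The paper sidesteps both difficulties at once with a short categorical argument: it transports the lifting square along the adjunction for the free difference algebra functor $[\sigma]_R$ --- where the twisted-diagonal constraint dissolves, since in $[\sigma]_RA$ the endomorphism merely shifts tensor factors --- and then descends via the counit $[\sigma]_RA\to(A,\sigma)$, which exists because $A$ already carries a difference structure. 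You should either adopt that route or supply a genuine proof that the correction equation at the step $A_0\to A_1$ is solvable.
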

\begin{proof}
It suffices to prove that $(R,\sigma)\to(S,\sigma)$ is formally smooth.
Suppose we have a solid part of the diagram from \ref{defformalsmooth}.
Using the functor $[\sigma]_R$, we produce a diagram
$$
 \begin{tikzpicture}
[cross line/.style={preaction={draw=white, -,
line width=4pt}}]
\matrix(m)[matrix of math nodes, row sep=1.1em, column sep=0em, text height=1.3ex, text depth=0.25ex]
{		 &[1em] |(u4)| {[\sigma]_RS}	& [-2em]			& [.5em]|(u3)| {[\sigma]_RA/I}	\\   %
|(u1)|	{(S,\sigma)} &		& |(u2)|{A/I} 	&			\\
		& |(d4)| {}	&			& |(d3)| {[\sigma]_RA}	\\   %
|(d1)|	{(R,\sigma)} &		& |(d2)|{(A,\sigma)} 	&			\\};
\path[->,font=\scriptsize,>=to, thin]
(d1) edge (u1) edge  (d2) 

(u1) edge  (u2)  edge node[auto]{$i_S$} (u4)
	
(d2) edge  (u2)  edge node[above]{$i_A$} (d3)
(u4) edge  (u3)	
(u2) edge[cross line]  (u3)	
(d3) edge  (u3)
(u4) edge[dashed,bend left=10](d3)
(u2) edge[cross line]  (u3)
(d3.south) edge[bend left=20] node[below right=-2pt]{$\pi_A$}(d2)
(u1) edge[dashed] (d2);
\end{tikzpicture}
$$ 
where the dashed arrow $[\sigma]_RS\to[\sigma]_RA$ exists by the assumption
of $\sigma$-smoothness, the morphism $\pi_A$ exists since $A$ already has 
a difference structure, and we can construct a morphism $(S,\sigma)\to(A,\sigma)$
as the obvious composite.
\end{proof}

\begin{definition}
A morphism $f:(X,\Sigma)\to(Y,\sigma)$ is called \emph{smooth} (resp.\ \emph{unramified, \'etale}), if it is of finite transformal type and for every
$x\in X$ and every $\tau\in\Sigma_x$, the morphism 
$(\OO_{f(x)},\sigma)\to(\OO_x,\tau)$ is smooth (resp.\ unramified, \'etale).
\end{definition}

\begin{remark}
A morphism $f:(X,\Sigma)\to(Y,\sigma)$ of finite
transformal type is  unramified (resp.\ \'etale) if and only if
for every $x\in X$ there exists an (affine) open neighbourhood on which $f$ is
modelled by an unramified (resp.\ \'etale) morphism of difference rings.
More precisely, if a morphism $(R,\sigma)\to (S,\sigma)$ is
 unramified (resp.\ \'etale) at some $\q\in\spec^\sigma(S)$ lying over $\p=\q\cap R$,
then there is a $g\notin\q$ such that the $\sigma$-localisation 
$(R,\sigma)\to(S_g,\sigma)$ is unramified (resp. \'etale).

The above `openness' statement for the property of being unramified is obvious 
from \ref{unramdiff} and \ref{difffg}. For \'etaleness, it follows from \ref{difffg} and
the \emph{Jacobian criterion} for smoothness which states the following. 
Let $(A,\sigma)$ be a difference ring, $(B,\sigma)$ a formally smooth 
$(A,\sigma)$-algebra,
$J$ a reflexive difference ideal of $B$ and let $C=B/J$. Then $(C,\sigma)$ is
a formally smooth $(A,\sigma)$-algebra if and only if the natural morphism
of difference modules
$$
\delta:J/J^2\to\Omega_{B/A}\otimes_BC$$
is left-invertible. We omit the details of the proof since these results
will not be used in the sequel.
\end{remark}

In view of Babbitt's decomposition discussed in the next section, 
many considerations reduce to 
a study of unramified or \'etale morphisms with stronger finiteness assumptions
such as quasi-finiteness or finiteness, and we will make every effort to
explicitly state them when possible.

\begin{proposition}\label{trivin-et}
Suppose $(G,\tilde{\Sigma})$ acts admissibly on $(X,\Sigma)$ of finite transformal
type over a difference field $(k,\sigma)$ and
suppose that $(Y,\Sigma_0)=(X,\Sigma)/(G,\tilde{\Sigma})$ and that $X$ is finite
over $Y$.
If $G_i(x)=(e)$ for all $x\in X$, the natural projection $(X,\Sigma)\to(Y,\Sigma_0)$
is finite \'etale.
\end{proposition}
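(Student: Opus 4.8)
The plan is to reduce the assertion, via the definition of an \'etale morphism of difference schemes, to a pointwise statement about local rings, and then to transport a classical fact about free finite group actions across the forgetful functor. Since the claim is local on $(Y,\Sigma_0)$ and $X$ is assumed finite over $Y$, it suffices to verify that for every $x\in X$ with image $y=f(x)$ and every $\tau\in\Sigma_x$ the local homomorphism $(\OO_y,\sigma)\to(\OO_x,\tau)$ is formally \'etale; finiteness of transformal type is already in hand. I would fix an affine chart $Y=\spec^\sigma(A)$, $X=\spec^\sigma(B)$ with $B$ finite over $A$ and $A$ the invariants of the $(G,\tilde\Sigma)$-action, and unwind the definition of the inertia group $G_i(x)$ from \cite{ive-tch}.

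The first genuine step is to \emph{decouple the difference structure from formal \'etaleness of the underlying ring map}. I claim that $(\OO_y,\sigma)\to(\OO_x,\tau)$ is formally \'etale as difference rings as soon as the plain ring map $\OO_y\to\OO_x$ is formally \'etale. Existence of a lift against a square-zero difference extension is inherited from the underlying map; for difference compatibility, let $\ell:\OO_x\to A$ be the unique classical lift of a difference diagram with kernel $I$, $I^2=0$. Then $\ell\circ\tau$ and $\sigma_A\circ\ell$ are both ring lifts of one and the same map $\OO_x\to A/I$ over $\OO_y$ (because every prescribed arrow in the diagram commutes with the difference operators), so they coincide by uniqueness; hence $\ell$ is automatically a difference morphism, and it is the unique such. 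This is precisely the point at which \emph{\'etaleness}, rather than mere smoothness, is indispensable: it is uniqueness of the lift that forces equivariance.

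It then remains to prove that the underlying local map is formally \'etale, and here the hypothesis enters. For unramifiedness, Lemma~\ref{unramdiff} reduces the question to the vanishing of $\Omega_{\OO_x/\OO_y}$ (which carries its natural difference structure by \ref{dercommsi}); the condition $G_i(x)=(e)$ says exactly that the inertia subgroup of the decomposition group at $x$ is trivial, which by the classical Galois theory of a finite group acting on a ring forces the residue extension $\kappa(x)/\kappa(y)$ to be separable and the local extension unramified, so $\Omega_{\OO_x/\OO_y}=0$. For flatness, triviality of inertia at every point means the action is free, so the fibre of $X\to Y$ over $y$ is a single $G$-orbit that is reduced of constant length $|G|$; a finite morphism with locally constant fibre length over a reduced base is flat. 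Unramified and flat together give formal \'etaleness of the underlying map, and combined with the previous paragraph the morphism $(X,\Sigma)\to(Y,\Sigma_0)$ is finite \'etale.

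The step I expect to be the main obstacle is the faithful translation of the difference-geometric hypothesis $G_i(x)=(e)$ --- stated for the $(G,\tilde\Sigma)$-action on the \emph{generalised} difference scheme $(X,\Sigma)$, whose points may carry several operators $\Sigma_x$ --- into the classical picture of a free action with trivial inertia on the underlying schemes, keeping careful track of the chosen $\tau\in\Sigma_x$ against the ambient $\sigma$ inside $(\OO_x,\tau)$. A secondary difficulty is that difference rings of finite $\sigma$-type need not be Noetherian, so the classical Galois-theoretic input cannot be quoted globally; where a global statement is required it should be recovered from the pointwise one through the well-mixed local--global principles of this section, such as \ref{wkloczero} and \ref{locflat}, tested at the maximal points of $\spec^\sigma$. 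Once the action is recognised as free in the appropriate transformal sense, the remaining arguments are the standard ones assembled above.
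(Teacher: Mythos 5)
Your opening reduction, and the observation that formal \'etaleness descends automatically from the underlying ring map to the difference ring map --- because the \emph{unique} classical lift against a square-zero extension must agree with its conjugates by the difference operators and is therefore equivariant --- is correct, and it is genuinely the right way to decouple the difference structure; the paper uses the same principle implicitly when it deduces \'etaleness of the difference morphism from \'etaleness of ordinary scheme morphisms. The gap is in the second half. Both classical inputs you invoke --- trivial inertia forces $\Omega_{\OO_x/\OO_y}=0$, and a finite morphism of locally constant fibre length over a reduced base is flat --- are theorems about (locally) Noetherian schemes, and neither the coordinate ring $A=k[a]_\Sigma$ nor its localisations $\OO_x$ at $\sigma$-primes are Noetherian (already the difference polynomial ring $k[x]_\sigma$ is not). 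Your proposed remedy, the well-mixed local--global principles \ref{wkloczero} and \ref{locflat}, only reduces a global assertion to the stalks; it does not make the stalks Noetherian, so the classical facts still cannot be quoted where you need them. The flatness step has two further problems: reducedness of $Y$ is nowhere among the hypotheses, and the claim that the fibre over $y$ is ``reduced of constant length $|G|$'' is essentially equivalent to the \'etaleness being proved, so the argument is circular at that point.

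The paper's proof supplies exactly the missing device: a Noetherian approximation in the \emph{transformal} direction rather than a localisation in the spectral one. Writing $\bar a=\{ga:g\in G\}$, one has $A=k[\bar a]_\sigma=\varinjlim_n A_n$ with $A_n=k[\bar a,\sigma\bar a,\ldots,\sigma^n\bar a]$ a $G$-stable finitely generated $k$-algebra, hence $B=A^G=\varinjlim_n A_n^G$, and $X\to Y$ is realised as the $\Sigma$-fixed-point locus of a projective limit of honest Galois covers $p_n:X_n\to Y_n$ of locally Noetherian schemes. Triviality of $G_i(x)$ forces $G_i(x_n)=(e)$ for every $n$, the classical theorem applies at each finite level to make $p_n$ \'etale at $x_n$, and formal \'etaleness is compatible with the filtered limit. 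If you want to keep your pointwise strategy, you must insert this (or an equivalent) reduction to the Noetherian case before invoking any classical Galois theory; as written, the argument has no level at which those theorems legitimately apply.
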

\begin{proof}
Since the assertion is local, we may assume that $X$ is affine and that we are
in the situation of \cite[\ref{quotaff}]{ive-tch}. Suppose $X=\spec^\Sigma(A)$ and
that $(G,\tilde{\Sigma})$ acts on $(A,\Sigma)$. By assumption, there is
a finite tuple $a\in A$ so that $A=k[a]_\Sigma$. Writing $\bar{a}=\{ga:g\in G\}$,
we have that $A=k[\bar{a}]_{{\sigma}}$ for any choice of $\sigma\in\Sigma$. 
Then $G$ acts on each $A_n=k[\bar{a},\sigma\bar{a},\ldots,\sigma^n\bar{a}]$,
and we can form $B_n=A_n^G$. We have 
$A=\varinjlim_nA_n$ and $B=A^G=\varinjlim_nB_n$ and we have formed a projective
limit of Galois covers $p_n:X_n\to Y_n$ such that $X\to Y$ is obtained by
taking the $\Sigma$-fixed points of the ambient Galois covering 
$\varprojlim_nX_n\to\varprojlim_nY_n$. Let $x\in X$ and write $x_n$ for the projection
of $x$ in $X_n$. Since $G_i(x)=(e)$, we have that $G_i(x_n)=(e)$ for all $n$, so
by the known result for Galois covers of locally Noetherian schemes
it follows that $p_n$ is \'etale at $x_n$. By compatibility of formal \'etaleness
with limits, we conclude that $X\to Y$ is \'etale at $x$.
\end{proof}

\begin{corollary}\label{et-trivin}
Suppose $(X,\Sigma)$ is integral and that $(G,\tilde{\Sigma})$ acts faithfully.
Then $X\to X/G$ is \'etale if and only if all the inertia groups of elements of $X$
are trivial.
\end{corollary}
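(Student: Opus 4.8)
The plan is to reduce both implications to the classical theory of Galois covers via the prolongation description of the quotient map, in the same spirit as the proof of \ref{trivin-et}. Since \'etaleness and triviality of the inertia groups are both local (pointwise) conditions, I would first replace $X$ by an invariant affine open $\spec^\Sigma(A)$, so that $X/G=\spec^\Sigma(B)$ with $B=A^G$ as in \cite[\ref{quotaff}]{ive-tch}, and realise $X\to X/G$ as the $\Sigma$-fixed points of an inverse limit of classical Galois covers $p_n:X_n\to Y_n$ with group $G$, where $X_n=\spec(A_n)$, $Y_n=\spec(B_n)$ and $B_n=A_n^G$. Because $X$ is integral and $G$ acts faithfully, each $X_n$ is integral with $G$ acting faithfully, and each $A_n$ is a finite $B_n$-module by Noether's finiteness theorem; thus every $p_n$ is a genuine Galois cover of integral schemes of finite type, and the hypotheses of \ref{trivin-et} become available level by level after passing to an invariant affine.

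The implication from trivial inertia to \'etaleness is then essentially \ref{trivin-et}: if $G_i(x)=(e)$ for every $x$, then $G_i(x_n)=(e)$ for the projections $x_n$, so each $p_n$ is \'etale at $x_n$ by the classical Galois theory of covers, and compatibility of formal \'etaleness with the prolongation limit yields that $X\to X/G$ is \'etale.

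For the reverse implication, suppose $X\to X/G$ is \'etale, hence unramified, so that $\Omega_{X/Y}=0$ by \ref{unramdiff}. Passing to prolongations, I would argue that each $p_n$ is unramified at $x_n$, whence the inertia group $G_i(x_n)$ of the classical Galois cover is trivial. Here faithfulness is indispensable: a nontrivial $g\in G_i(x_n)$ would fix $x_n$ and act trivially on the residue field while acting nontrivially on the integral scheme $X_n$, so that $g-1$ induces a nonzero element of the relative cotangent space and forces $\Omega$ to be nonzero, a contradiction. I would then invoke the relationship between $G_i(x)$ and the tower $(G_i(x_n))_n$ recorded in \cite{ive-tch} to conclude that $G_i(x)=(e)$.

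The main obstacle is precisely this reverse direction, since \ref{trivin-et} supplies only the forward implication. I must independently establish that difference-\'etaleness of $X\to X/G$ descends to unramifiedness of each prolongation $p_n$ at $x_n$, and pin down exactly how triviality of all the $G_i(x_n)$ forces triviality of $G_i(x)$. Both points hinge on the precise compatibility between the inertia group of a difference point and those of its prolongations, together with the essential use of faithfulness to detect ramification through the action on the tangent space.
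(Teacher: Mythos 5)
Your forward direction is fine and is exactly what the paper does: it is an immediate citation of \ref{trivin-et}. The problem is the reverse direction, and it is precisely the step you flag as an obstacle: the descent of unramifiedness from the difference morphism to the finite prolongation levels does not hold. Unramifiedness of $\OO_{p(x)}\to\OO_x$ means $\Omega_{\OO_x/\OO_{p(x)}}=0$ (\ref{unramdiff}), and this module is the filtered colimit $\varinjlim_n\bigl(\Omega_{\OO_{x_n}/\OO_{y_n}}\otimes_{\OO_{x_n}}\OO_x\bigr)$; a filtered colimit of nonzero modules can perfectly well vanish, because a differential that is nonzero at level $n$ can be killed by the transition maps at a later level. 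This is not a pathology one can wave away in difference algebra --- it is exactly the mechanism by which, say, perfections become unramified while every finite stage is ramified --- so the assertion ``each $p_n$ is unramified at $x_n$'' is unjustified and, as a general implication, false. Since your whole reverse implication is routed through the classical inertia groups $G_i(x_n)$ of the prolongations, the argument does not close. (The last step, passing from triviality of the $G_i(x_n)$ to triviality of $G_i(x)$, would actually be the easy part, since $G_i(x)\subseteq G_i(x_n)$ for every $n$; and your tangent-space sketch of the classical fact is also looser than the standard argument, though that is a minor point.)

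The paper avoids prolongations entirely in this direction. It first reduces to the case $G=G_i(x)$ with $\kk(x)=\kk(p(x))$ (replace the quotient by the intermediate one corresponding to the inertia group). It then observes that, because $X$ is integral and $G$ acts faithfully on $X$, the group $G$ acts faithfully on the single difference local ring $\OO_x$ (an automorphism trivial on $\OO_x$ is trivial on the function field, hence on $X$). Finally, since difference-\'etale implies \'etale on underlying rings, one has a finite local \'etale extension $\OO_{p(x)}\to\OO_x$ with trivial residue extension and a faithful $G$-action with invariants $\OO_{p(x)}$, and the classical argument applied to this one extension gives $G=(e)$. If you want to salvage your route, you would have to replace ``unramified at every finite level'' by an argument carried out at the limit itself, which is in effect what the paper's proof does.
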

\begin{proof}
By the previous result, it suffices to show that if the quotient morphism is \'etale at 
$x\in X$, then $G_i(x)=(e)$. Take an $x\in X$ with $p:X\to X/G$ \'etale.
We can easily reduce to the case where $G_i(x)=G$ and $\kk(x)=\kk(p(x))$.
Note, since $X$ is integral and $G$ is faithful on $X$,
then $G$ is also faithful on $\OO_x$. However, the classical proof works for the
finite local \'etale  extension $\OO_{p(x)}\to\OO_x$ and shows that $G=(e)$.  
\end{proof}

\subsection{Babbitt's decomposition}

\begin{definition}
A morphism $(S,\sigma)\to(R,\sigma)$ of integral difference rings
is called \emph{benign}
if there exists a quasifinite $S\to R_0$ such that $(R,\sigma)$ is isomorphic
to $[\sigma]_S R_0$ over $(S,\sigma)$. In other words, writing 
$R_{i+1}= R_i\otimes_SS$ for $i\geq0$ (where the morphism $S\to S$
is $\sigma$), $(R,\sigma)$ is the (limit) tensor product of the $R_i$ and
the canonical morphisms $\sigma_i:R_i\to R_{i+1}$ over $S$.

In the \emph{benign Galois} case, $R_0$ is Galois over $S$ with group $G_0$
and the Galois group $\Gal(R/S)=\Gal(\kk(R)/\kk(S))=(G,()^\sigma)$
 is isomorphic to the direct product of 
 $G_i=\Gal(R_i/S)$ and $()^\sigma$ `shifts' from $G_i$ to $G_{i+1}$. 
 \end{definition}
 In view of such a specific form of $G$, for any $h,h' \in G$ there is a $g\in G$
 such that $h'=g^{-1}hg^\sigma$, i.e. $h$ and $h'$ are \emph{$()^\sigma$-conjugate,}
 and we get:
 \begin{lemma}\label{benignliftrng}
 For any $y\in\spec^{\sigma}(S)$, any algebraically closed difference field 
 $(F,\varphi)$ extending $(\kk(y),\sigma^y)$, any $\bar{y}\in (S,\sigma)(F,\varphi)$ above $y$ and any $g\in G$,
 there exists an $\bar{x}\in (R,g\sigma)(F,\varphi)$ lifting $\bar{y}$.
\end{lemma}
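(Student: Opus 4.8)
The plan is to reduce the twisted lifting problem to the untwisted one. Since $g$ fixes $S$ pointwise we have $g\sigma|_S=\sigma$, so $(R,g\sigma)$ is a difference $(S,\sigma)$-algebra and ``lifting $\bar y$'' means extending the difference homomorphism $\bar y\colon(S,\sigma)\to(F,\varphi)$ along the structure map $(S,\sigma)\to(R,g\sigma)$. I would obtain the lift by first exhibiting a difference isomorphism $(R,g\sigma)\cong(R,\sigma)$ of $(S,\sigma)$-algebras, and then invoking an untwisted lift, which exists for formal reasons.

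The first ingredient is the commutation relation $\sigma\circ b=b^\sigma\circ\sigma$ on $R$, valid for every $b\in G=\Gal(R/S)$: this is simply the statement that the action of $()^\sigma$ on $G$ is the one induced by $\sigma$, which in the benign Galois case is the shift of $G_i$ into $G_{i+1}$ described above (on generators, $\sigma$ carries the $i$-th factor canonically onto the $(i+1)$-st while $()^\sigma$ moves the $G_i$-action to $G_{i+1}$).

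The key step is to find $h\in G$ with $h^\sigma=hg$, equivalently $h^{-1}h^\sigma=g$. This is precisely the assertion that $g$ lies in the image of the map $a\mapsto a^{-1}a^\sigma$, which is onto $G$: taking the base element to be $e$ in the $()^\sigma$-conjugacy remark preceding the lemma, every element of $G$ is of the form $a^{-1}\,e\,a^\sigma=a^{-1}a^\sigma$. Given such an $h$, regard it as an $S$-automorphism of $R$. Combining $\sigma\circ h=h^\sigma\circ\sigma$ with $h^\sigma=hg$ yields $\sigma\circ h=(hg)\circ\sigma=h\circ(g\sigma)$, so $h$ intertwines $g\sigma$ on the source with $\sigma$ on the target. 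As $h$ is a ring automorphism of $R$ fixing $S$, it is a difference isomorphism $h\colon(R,g\sigma)\to(R,\sigma)$ of $(S,\sigma)$-algebras.

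Finally I would supply an untwisted lift $\bar x_0\colon(R,\sigma)\to(F,\varphi)$ of $\bar y$. Since $R=[\sigma]_SR_0$, the defining adjunction identifies difference homomorphisms $(R,\sigma)\to(F,\varphi)$ extending $\bar y$ with $S$-algebra homomorphisms $R_0\to F$ extending $\bar y|_S$; as $R_0$ is finite Galois over $S$ and $F$ is algebraically closed, such an extension exists (for instance $R_0\otimes_SF$ is a nonzero finite $F$-algebra, whose residue field at any maximal ideal equals $F$). Setting $\bar x=\bar x_0\circ h$ then gives a difference homomorphism $(R,g\sigma)\to(F,\varphi)$, and since $h$ fixes $S$ it restricts to $\bar x_0|_S=\bar y$ on $S$; thus $\bar x$ lifts $\bar y$, as required. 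I expect the only genuinely delicate point to be the bookkeeping behind $\sigma b=b^\sigma\sigma$ and the correct solvability of $h^\sigma=hg$; once the conjugacy remark is read as the surjectivity of $a\mapsto a^{-1}a^\sigma$, the rest is formal.
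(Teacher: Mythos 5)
Your proof is correct and is exactly the argument the paper intends: the lemma is stated as an immediate consequence of the remark that any two elements of $G$ are $()^\sigma$-conjugate, and taking $h=e$ there gives precisely your equation $g=a^{-1}a^\sigma$, whose solution (by the evident coordinate-wise recursion in the direct product $\prod_i G_i$) untwists $(R,g\sigma)$ into $(R,\sigma)$, after which the lift exists by the adjunction defining $[\sigma]_SR_0$ and the surjectivity of $\spec(R_0)\to\spec(S)$. Your write-up supplies the bookkeeping the paper leaves implicit, and it is sound.
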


\begin{definition}
\begin{enumerate}
\item
A morphism $(\psi,()^\psi):(S,T)\to(R,\Sigma)$ of integral almost-strict difference rings is \emph{benign} if for some (or equivalently, for all) $\sigma\in\Sigma$,
the morphism $(S,\sigma^\psi)\to (R,\sigma)$ is benign.
\item 
A morphism $(X,\Sigma)\to(Y,T)$ of almost-strict difference schemes is \emph{benign} if it is affine and above each open affine subset of $Y$ it is modelled by
a fixed-point spectrum of a benign morphism of rings.
\end{enumerate}
\end{definition}

An immediate consequence of \ref{benignliftrng} is the following.
\begin{lemma}\label{benignlift}
Let $(X,\Sigma)\to(Y,T)$ be an \'etale benign Galois morphism.
For any $\tau\in T$ and $\sigma\in\Sigma$ mapping to $\tau$, any
$y\in Y^\tau$, any algebraically closed difference field $(F,\varphi)$ extending
$(\kk(y),\tau^y)$, any $\bar{y}\in Y^\tau(F,\varphi)$ above $y$, there exists
an $\bar{x}\in X^\sigma(F,\varphi)$ lifting $\bar{y}$. %
\end{lemma}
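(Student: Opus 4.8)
The plan is to deduce the statement directly from the ring-theoretic lifting result \ref{benignliftrng}, the only genuine work being to unwind the definition of a benign morphism of almost-strict difference schemes and to reduce to a single affine chart. First I would exploit that a benign morphism is affine, and that the existence of the lift $\bar{x}$ may be verified locally on the target. Replacing $Y$ by an open affine neighbourhood of $y$, I may therefore assume that $(X,\Sigma)\to(Y,T)$ is the fixed-point spectrum of a benign morphism of integral almost-strict difference rings $(\psi,()^\psi)\colon(S,T)\to(R,\Sigma)$, so that $\bar{y}\in Y^\tau(F,\varphi)$ lies above the point $y\in Y^\tau=\spec^\tau(S)$.

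Next I would pass from the almost-strict data to the ordinary difference rings determined by the chosen $\sigma\in\Sigma$. By the definition of a benign morphism of almost-strict difference rings, the induced morphism $(S,\tau)\to(R,\sigma)$ is benign in the sense of integral difference rings, with $\sigma$ lying over $\tau$ through $\psi$; in particular $\sigma$ and $\tau$ agree on the image of $S$, so the local substitutions satisfy $\sigma^y=\tau^y$ on $\kk(y)$. Under the identifications $X^\sigma=\spec^\sigma(R)$ and $Y^\tau=\spec^\tau(S)$, the hypotheses become exactly those of \ref{benignliftrng}: a point $y\in\spec^\tau(S)$, an algebraically closed difference field $(F,\varphi)$ extending $(\kk(y),\tau^y)=(\kk(y),\sigma^y)$, and an $(F,\varphi)$-point $\bar{y}$ above $y$.

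The decisive point is to match the arbitrary choice of $\sigma\in\Sigma$ with a group element. The set $\Sigma$ of lifts of $\tau$ to $R$ is a torsor under $G=\Gal(R/S)$: fixing the reference lift $\sigma_0$ occurring in the benign presentation $R\cong[\sigma_0]_SR_0$, I may write $\sigma=g\sigma_0$ for a unique $g\in G$, whence $X^\sigma(F,\varphi)=(R,g\sigma_0)(F,\varphi)$. Applying \ref{benignliftrng} to $(S,\tau)\to(R,\sigma_0)$ with precisely this $g$ yields an $\bar{x}\in(R,g\sigma_0)(F,\varphi)=X^\sigma(F,\varphi)$ lifting $\bar{y}$, as required. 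I should stress that the \'etale and Galois hypotheses are not needed for the mere existence of the lift, which is entirely the content of \ref{benignliftrng}; they merely record the standing situation in which the lemma is invoked later.

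The one place demanding care, and the main (modest) obstacle, is the dictionary between the two formalisms: one must check that the universal quantifier ``for any $g\in G$'' in \ref{benignliftrng} corresponds exactly to the quantifier ``for any $\sigma\in\Sigma$'' here, via the simply transitive action of $G$ on the lifts of $\tau$, and that restricting $\sigma$, $\sigma_0$ and $g\sigma_0$ to $S$ all recover $\tau$, so that $\bar{y}$ is the same object throughout. Once this bookkeeping is in place the proof is a single appeal to \ref{benignliftrng}, in accordance with the assertion that \ref{benignlift} is an immediate consequence.
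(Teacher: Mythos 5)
Your proposal is correct and follows exactly the route the paper intends: the paper offers no written proof, stating only that \ref{benignlift} is ``an immediate consequence'' of \ref{benignliftrng}, and your argument supplies precisely the omitted bookkeeping (reduction to an affine chart via the definition of a benign morphism of almost-strict difference schemes, and the identification of the quantifier over $\sigma\in\Sigma$ with the quantifier over $g\in G$ in \ref{benignliftrng}). Your remark that the \'etale and Galois hypotheses are not needed for the bare lifting statement is also consistent with the paper, since \ref{benignliftrng} is stated in the benign Galois case without any \'etaleness assumption.
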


\begin{definition}
Two difference schemes $(X_1,\Sigma_1)$ and $(X_2,\Sigma_2)$ 
are called \emph{equivalent}, written $(X_1,\Sigma_1)\simeq(X_2,\Sigma_2)$, if they have isomorphic inversive closures, 
$(X_1^{\rm inv},\Sigma_1^{\rm inv})\cong(X_2^{\rm inv},\Sigma_2^{\rm inv})$.
\end{definition}

The following is a slight refinement of a fundamental theorem from
\cite{babbitt}, showing how to use it for not necessarily inversive difference fields.
\begin{lemma}[Babbitt's Theorem]\label{babb-fields}
Let $(K,\sigma)\to (L,\sigma)$ be a $\sigma$-separable Galois extension of finite $\sigma$-type. Then we have a tower
$$
(K,\sigma)\to(L_0,\sigma)\to(L_1,\sigma)\to\cdots\to(L_n,\sigma)\simeq(L,\sigma)
$$
of difference field extensions with $L_0/K$ finite and all $L_{i+1}/L_i$
benign for $i\geq 0$.
\end{lemma}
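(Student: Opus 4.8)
The plan is to reduce to the inversive situation, where the classical theorem of Babbitt \cite{babbitt} applies, and then to descend the resulting tower to the (possibly non-inversive) fields $K$ and $L$, using the equivalence relation $\simeq$ only at the top.

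First I would pass to inversive closures. Replacing $(K,\sigma)$ and $(L,\sigma)$ by $(K^{\rm inv},\sigma)$ and $(L^{\rm inv},\sigma)$, one checks that $\sigma$-separability, the Galois property and finiteness of $\sigma$-type are inherited by the extension $(K^{\rm inv},\sigma)\to(L^{\rm inv},\sigma)$ of inversive difference fields; in particular restriction induces an isomorphism of the relevant Galois groups, so the classical hypotheses are met. Babbitt's decomposition then yields a tower
$$
(K^{\rm inv},\sigma)=(M_{-1},\sigma)\to(M_0,\sigma)\to\cdots\to(M_n,\sigma)=(L^{\rm inv},\sigma)
$$
of inversive difference fields with $M_0/K^{\rm inv}$ finite and each $M_{i+1}/M_i$ benign, the latter encoded by a finite $\sigma$-separable Galois \emph{core} extension $N_i/M_i$ whose freely (i.e.\ linearly disjointly) placed $\sigma$-transforms generate $M_{i+1}$.

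Next I would descend this tower to non-inversive models. The key observation is that a benign extension is determined by finite data: the core $N_i/M_i$ is finite, hence already defined over a finitely $\sigma$-generated sub-difference-field, and adjoining only the \emph{nonnegative} transforms of a generator of $N_i$ produces a non-inversive benign extension in the sense of the definition preceding \ref{benignliftrng}, i.e.\ one of the form $[\sigma]_S R_0$, whose inversive closure recovers the two-sided inversive benign extension. Proceeding from the top, I would choose finitely $\sigma$-generated models $L_i$ of the $M_i$, aligned by applying a suitable power $\sigma^{m_i}$ to the core generators so that the honest inclusions $L_0\subseteq L_1\subseteq\cdots\subseteq L_n$ and the benignity of each step are preserved, while at the bottom the shift absorbs the infinite extension $K^{\rm inv}/K$ and turns $M_0/K^{\rm inv}$ into a genuine finite field extension $L_0/K$. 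Taking inversive closures returns the $M_i$, so in particular $L_n^{\rm inv}\cong L^{\rm inv}$, that is $(L_n,\sigma)\simeq(L,\sigma)$.

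The hard part will be precisely this descent. One must reconcile the two-sided (inversive) benign structure produced by the classical theorem with the one-sided, nonnegative-transform definition of benign used here, and simultaneously arrange, via the $\sigma$-shifts, that the finite bottom $M_0/K^{\rm inv}$ descends to a finite $L_0/K$ rather than to an extension that is finite only after inversive closure. Verifying that the shifted non-inversive models keep their $\sigma$-transforms linearly disjoint, so that each $L_{i+1}/L_i$ is genuinely benign and not merely equivalent to a benign extension, is the technical heart; this is where the $\sigma$-separability hypothesis is essential, guaranteeing that the cores remain separable and that the free-compositum structure survives the shifts.
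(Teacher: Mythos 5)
Your overall strategy (invoke the classical theorem of Babbitt over the inversive closure of the base, then descend using $\sigma$-shifts and $\sigma$-separability) is the right one, but the proposal has a genuine gap: the descent, which you yourself identify as ``the hard part'' and ``the technical heart,'' is exactly the content of the lemma, and you do not carry it out. Worse, the particular reduction you choose makes that descent harder than it needs to be. By passing to $(K^{\rm inv},\sigma)\to(L^{\rm inv},\sigma)$ you obtain a tower of \emph{inversive} fields $M_i$, so every single step $M_{i+1}/M_i$ is benign only in the two-sided sense, and you must reconcile this with the one-sided definition (an $[\sigma]_S R_0$-type extension) at \emph{every} level, checking each time that the shifted generators keep their transforms linearly disjoint over the smaller, non-inversive $L_i$ (linear disjointness over $M_i$ does not formally imply it over $L_i\subseteq M_i$, and one must also control that the degrees $[L_i(\sigma^j v):L_i]$ do not jump). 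None of this is verified in the proposal.

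The paper sidesteps all of this by applying Babbitt's original theorem not to $L^{\rm inv}$ but to $\tilde{L}=K^{\rm inv}L$ over the inversive base $K^{\rm inv}$: since $\tilde{L}$ is not inversive, the classical conclusion already produces a genuinely one-sided benign tower $\tilde{L}_0(u_1,\ldots,u_i)_\sigma$ above the core $\tilde{L}_0$ of $K^{\rm inv}$ in $\tilde{L}$, so no two-sided/one-sided reconciliation is needed anywhere above the bottom. The only descent required is at the bottom layer, and it is handled by the elementary remark that if $F/K$ is $\sigma$-separable and $F^{\rm inv}=K^{\rm inv}(a)_\sigma$, then $F=K(\sigma^r a)_\sigma$ for some $r\geq 0$; applied to the (inversive) core $\tilde{L}_0$ this identifies it with the core of $K$ in $L$, which is finite over $K$. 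The equivalence $\simeq$ at the top is then simply $L\simeq\tilde{L}$, since both have inversive closure $L^{\rm inv}$. If you want to salvage your write-up, replace $L^{\rm inv}$ by $K^{\rm inv}L$ and prove the displayed remark; that single shift of viewpoint eliminates the level-by-level descent you were unable to complete.
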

\begin{proof}
Let us remark that, if $F/K$ is $\sigma$-separable, and 
$F^{\rm inv}=K^{\rm inv}(a)_{\sigma}$
for some $a=a_1,\ldots,a_n$, then there is an $r\geq 0$ such that 
$F=K(\sigma^ra)_\sigma$. 

The original theorem from \cite{babbitt} gives that, writing $\tilde{L}=K^{\rm inv}L$,
and $\tilde{L}_0$ for the core of $K^{\rm inv}$ in $\tilde{L}$, there exist
$u_1,\ldots,u_n\in \tilde{L}$ such that 
$\tilde{L}\simeq \tilde{L}_0(u_1,\dots,u_n)_\sigma$ and for every $0\leq i\leq n-1$,
$\tilde{L}_0(u_1,\ldots,u_{i+1})_\sigma$ is a benign extension of 
$\tilde{L}_0(u_1,\ldots,u_{i})_\sigma$ with normal minimal generator $u_{i+1}$.

It is known that $\tilde{L}_0$ is inversive, so using the above
remark for $F=\tilde{L}_0$, we deduce that $\tilde{L}_0=L_K$, the core of $K$ in $L$.
Bearing in mind that $L\simeq\tilde{L}$, we produce the required decomposition.
\end{proof}

Babbitt's theorem on algebraic extensions of difference fields  has the following consequence in our terminology, providing a deep structure theorem.
\begin{theorem}[Babbitt's decomposition]\label{babb-sch}
Any  generically \'etale quasi-Galois $\sigma$-separable morphism of finite
transformal type $(X,\Sigma)\to (Y,T)$ of normal %
affine almost-strict difference schemes factorizes as 
$$(X,\Sigma)\simeq(X_n,\Sigma_n)\to\cdots\to(X_1,\Sigma_1)\to (X_0,\Sigma_0)\to (Y,T),$$
where $(X_0,\Sigma_0)\to(Y,T)$ is generically finite \'etale quasi-Galois and for 
$i\geq0$, $(X_{i+1},\Sigma_i)\to(X_{i},\Sigma_i)$ is benign Galois. 
Modulo a transformal localisation of $Y$, we can achieve that $(X_0,\Sigma_0)\to (Y,T)$
is finite \'etale quasi-Galois, and that $X_{i+1}\to X_i$ are \'etale benign Galois.
\end{theorem}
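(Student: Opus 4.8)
The plan is to bootstrap the scheme-theoretic statement \ref{babb-sch} from the field-theoretic version \ref{babb-fields} by passing to the generic point and then spreading the resulting tower out over a transformal localisation of $Y$. First I would reduce to the generic fibre: since $(X,\Sigma)\to(Y,T)$ is generically \'etale quasi-Galois and $Y$ is integral (being normal affine almost-strict), I pass to the generic point $\eta$ of $Y$ and consider the induced extension of difference \emph{fields} $(\kk(\eta),\sigma^\eta)\to(\kk(X),\sigma)$, where I select a compatible $\sigma\in\Sigma$ over $\tau\in T$. The hypothesis of $\sigma$-separability and generic \'etaleness guarantees that this is precisely a $\sigma$-separable Galois extension of finite $\sigma$-type, so \ref{babb-fields} applies and yields a tower
$$
(\kk(\eta),\sigma)\to(L_0,\sigma)\to(L_1,\sigma)\to\cdots\to(L_n,\sigma)\simeq(\kk(X),\sigma),
$$
with $L_0/\kk(\eta)$ finite and each $L_{i+1}/L_i$ benign for $i\geq 0$.

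Next I would \emph{algebrise} this generic tower. Each $L_i$ is the function field of a normal integral difference scheme $(X_i,\Sigma_i)$ dominating $Y$: concretely, I take $X_i$ to be the normalisation of $Y$ (or of the previous $X_{i-1}$) inside $L_i$, which is of finite transformal type because the whole extension is. The benign field extensions $L_{i+1}/L_i$ are, by definition, of the form $[\sigma]$ applied to a quasifinite extension of the base field, and this shape is inherited at the level of a suitable transformal localisation: the finitely many data witnessing benignity (the quasifinite generator $R_0$, the Galois group $G_0$, the product decomposition of the Galois group) are all defined over some localisation of the relevant coordinate ring. Thus, after a transformal localisation of $Y$, the connecting morphisms $(X_{i+1},\Sigma_i)\to(X_i,\Sigma_i)$ become benign Galois, and $(X_0,\Sigma_0)\to(Y,T)$ becomes generically finite \'etale quasi-Galois, which is exactly the first assertion.

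For the refined conclusion I would invoke the genericity results of Section~\ref{sec:localstudy}. The morphism $(X_0,\Sigma_0)\to(Y,T)$ is generically finite and separable algebraic, so \ref{loc-si-smooth}(2) gives a $\sigma$-localisation making it $\sigma$-\'etale, and finiteness plus quasi-Galois descends to make it finite \'etale quasi-Galois; here I should also arrange, via \ref{locnorm}, that all the $X_i$ remain normal after localisation. For the benign layers, I use the Preparation Lemma \ref{preprl} together with \ref{sigma-generic} to localise $Y$ further so that the fibre-product structure defining each benign stage is genuinely an isomorphism and each $X_{i+1}\to X_i$ is \'etale; since there are only finitely many layers, finitely many localisations suffice and a single transformal localisation of $Y$ absorbs them all.

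The main obstacle I anticipate is the descent-and-spreading step: \ref{babb-fields} is purely generic, and transporting benignity—an equality on the nose with $[\sigma]_S R_0$—from function fields down to a localised family requires care, because the benign structure is a statement about the \emph{entire} prolongation sequence $R_{i+1}=R_i\otimes_S S$, not just a single finite level. I would handle this by noting that benignity is detected at a single finite stage (the quasifinite $R_0$ together with the freeness of the tensor structure), exactly as in the Preparation Lemma, so that once the first stage is arranged over a localisation the inductive fibre-product description propagates automatically; controlling the interaction of the localisation with the equivalence $\simeq$ (inversive closure) at the top of the tower is the remaining delicate point.
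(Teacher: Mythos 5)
Your proposal is correct and follows essentially the same route as the paper: apply the field-theoretic Babbitt decomposition \ref{babb-fields} to the function field extension, take $X_i$ to be the normalisation of $Y$ in each layer $L_i$, and spread out by transformal localisation. The "main obstacle" you identify at the end is precisely what the paper isolates as Lemma \ref{norm-benign} (the normalisation of a normal domain in a benign extension generated by the fraction field of an \'etale algebra is itself benign, via torsion-freeness and linear disjointness as in \ref{preprl}), and your sketch of its resolution matches the paper's argument.
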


\begin{proof}
By applying Babbitt's theorem \ref{babb-fields} 
to the extension of function fields $(\kk(Y),\tau)\to (\kk(X),\sigma)$ for a suitable
choice of $\sigma$ and $\tau$ we obtain a tower of difference field extensions
$$(\kk(Y),\tau)\to (L_0,\sigma)\to(L_1,\sigma)\cdots\to (L_n,\sigma)\simeq(\kk(X),\sigma),$$
where $L_0/\kk(Y)$ is finite and each $(L_{i+1},\sigma)/(L_i,\sigma)$ is benign
for $i\geq0$. We let $\Sigma_i$ be the $\diff$-structure obtained as a restriction of 
$\Sigma$ from $L_n$ to $L_i$. 
Let $(X_i,\Sigma_i)$ be the normalisation of $(Y,T)$ in $(L_i,\Sigma_i)$.
It is clear from \cite[\ref{cor:norm-lattice}]{ive-tch}
 that by a transformal localisation we can
achieve
that $X_0\to Y$ is finite \'etale and it remains to show that each $X_{i+1}\to X_i$
can be made \'etale benign, which is granted by the following lemma.
\end{proof}

\begin{lemma}\label{norm-benign}
Let $(R,\sigma)$ be a normal domain with fraction field $(K,\sigma)$ and let 
$(K,\sigma)\to (L,\sigma)$ be a benign extension of difference fields
such that $L$ is the composite of the linearly disjoint subfields $L_i=\sigma^i(L_0)$
where $L_0$ is a fraction field of an \'etale $R$-algebra $A_0$.
Then the normalisation of $R$ in $L$ is the (limit) tensor product of the $A_0^{\sigma^i}$ over $R$ and thus benign over $R$.
\end{lemma}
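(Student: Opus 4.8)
The plan is to establish that the normalisation $\tilde{R}$ of $R$ in $L$ coincides with the (limit) tensor product $A:=\bigotimes_{i\geq 0,R} A_0^{\sigma^i}$, where $A_0^{\sigma^i}$ is the $\sigma^i$-twisted base change of $A_0$ over $R$. First I would fix the benign structure: by hypothesis $L$ is the composite of the linearly disjoint $L_i=\sigma^i(L_0)$, each $L_0=\operatorname{Frac}(A_0)$ with $A_0$ a finite \'etale $R$-algebra, so $L_i=\operatorname{Frac}(A_0^{\sigma^i})$. Since $\sigma$ acts on $L$ by shifting $L_i$ to $L_{i+1}$, the candidate $A$ carries a natural difference structure, and its fraction field is exactly $L$ because the $L_i$ are linearly disjoint over $K$ (so the multiplication map $\bigotimes_{i,K}L_i\to L$ is an isomorphism).

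The key algebraic input is that \emph{a finite tensor product of finite \'etale algebras over a normal domain, when that tensor product is a domain, is already integrally closed in its fraction field}. Concretely, I would argue step by step. The base case: $A_0$ is finite \'etale over the normal domain $R$; localising $R$ if necessary I may assume $A_0$ is a domain, and a finite \'etale extension of a normal domain is normal (the classical statement: \'etale over normal is normal, since \'etaleness forces the extension to be unramified and flat, hence the Serre criterion $(R_1)+(S_2)$ is inherited). Inductively, suppose $A_{(n)}:=\bigotimes_{i=0}^{n,R}A_0^{\sigma^i}$ is normal with fraction field $L_0\cdots L_n$. Then $A_{(n+1)}=A_{(n)}\otimes_R A_0^{\sigma^{n+1}}$, and since $A_0^{\sigma^{n+1}}$ is finite \'etale over $R$, base change preserves finite \'etaleness, so $A_{(n+1)}$ is finite \'etale over $A_{(n)}$; being finite \'etale over the normal domain $A_{(n)}$ (again after ensuring it is a domain, which follows from linear disjointness of the $L_i$), it is itself normal. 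Passing to the limit, $A=\varinjlim_n A_{(n)}$ is a directed union of normal domains with fraction field $L$, hence normal, and it is integral over $R$ since each $A_0^{\sigma^i}$ is finite over $R$; therefore $A=\tilde R$. The benignity of $\tilde R/R$ over $(R,\sigma)$ is then immediate from the definition, taking $R_0=A_0$ and identifying $\tilde R\cong[\sigma]_R A_0$ via the tensor-product description $R_{i+1}=R_i\otimes_R(R\text{ along }\sigma)$.

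The step I expect to be the main obstacle is verifying that the integral closure is genuinely \emph{captured} by the tensor product rather than merely contained in it. The containment $A\subseteq\tilde R$ is clear from integrality, and $\operatorname{Frac}(A)=L$ is clear from linear disjointness, so the whole weight rests on showing $A$ is already integrally closed. The delicate point is the need to know each successive tensor product is a domain before invoking ``\'etale over normal is normal''; this is where linear disjointness of the $L_i$ over $K$ is essential, guaranteeing that $A_{(n)}\otimes_R A_0^{\sigma^{n+1}}$ is reduced and irreducible with the correct fraction field rather than splitting into several factors. A secondary subtlety is that ``finite \'etale'' may only hold after the transformal localisation of $R$ already granted in the hypothesis (coming from the localisation in Babbitt's decomposition \ref{babb-sch}), so I would state the normalisation identity as holding over a suitable localisation, which is exactly the regime in which \ref{babb-sch} applies it.
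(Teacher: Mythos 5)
Your argument is correct and takes essentially the same route as the paper's (much terser) proof: the paper likewise uses \'etaleness to get $R$-flatness/torsion-freeness of the tensor products, invokes linear disjointness of the $L_i$ to see that each finite tensor product is a domain with the correct fraction field (precisely the point you flag as delicate, argued ``in the spirit of'' the Preparation Lemma \ref{preprl}), and relies on \'etale-over-normal-is-normal to conclude the tensor product is already integrally closed. No substantive difference.
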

\begin{proof}
Since $A_0$ is \'etale over $R$ so is any $A_0^{\sigma^i}$, and any
tensor product of those is therefore $R$-torsion-free and the conclusion
follows from linear disjointness in the spirit of the proof of \ref{preprl}.
\end{proof}

\begin{lemma}
Let $(K,\sigma)\to(L_0,\sigma)\to(L_1,\sigma)$ be a tower of Galois extensions of 
difference fields with $L_0/K$ finite and $L_1/L_0$ benign.
The exact sequence
$$
1\to\Gal(L_1/L_0)\to\Gal(L_1/K)\to\Gal(L_0/K)\to 1
$$
is split.
\end{lemma}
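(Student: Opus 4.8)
The plan is to produce a group-theoretic section of $\Gal(L_1/K)\to\Gal(L_0/K)=:H$, equivalently (via the Galois correspondence for the extension $L_1/K$) an intermediate difference field $E$ with $K\subseteq E\subseteq L_1$ satisfying $E\cap L_0=K$ and $E\cdot L_0=L_1$; the fixed group $\Gal(L_1/E)$ is then a complement to $N:=\Gal(L_1/L_0)$ inside $\Gal(L_1/K)$, which is the same thing as a splitting. Writing the benign extension as $L_1=L_0(u_0,\sigma u_0,\sigma^2 u_0,\ldots)$ with $u_0$ a normal generator of $R_0$ over $L_0$, the natural candidate is the difference subfield $E=K(u_0)_\sigma$ generated by $u_0$ over $K$. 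The inclusion $E\cdot L_0=L_1$ is immediate from the construction, so the whole content is concentrated in the disjointness $E\cap L_0=K$.

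To control $E\cap L_0$ I would first reduce to a finite problem. Let $M_n$ be the Galois closure over $K$ of $L_0(u_0,\ldots,\sigma^{n-1}u_0)$ inside $L_1$; each $M_n/K$ is finite Galois, $L_0\subseteq M_n$, and $\bigcup_n M_n=L_1$, so that $\Gal(L_1/K)=\varprojlim_n\Gal(M_n/K)$, and correspondingly one obtains finite exact sequences $1\to\Gal(M_n/L_0)\to\Gal(M_n/K)\to H\to 1$. Since $H$ is finite, the sets of sections of these finite sequences form an inverse system of finite sets under the restriction maps $\Gal(M_{n+1}/K)\to\Gal(M_n/K)$ (composing a section with the projection again yields a section), so by the standard compactness argument their inverse limit is non-empty as soon as each finite sequence splits. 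A section of $\Gal(L_1/K)\to H$ is precisely such a compatible system, so it suffices to split at each finite level.

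The finite-level splitting is where benignity must be used decisively. Here the plan is to exploit the shift operator $()^\sigma$, which carries the $i$-th benign layer $G_i$ isomorphically onto $G_{i+1}$, together with the linear disjointness of the subfields $L_0(\sigma^i u_0)$ over $L_0$ and the lifting property \ref{benignliftrng}: the latter guarantees that the benign generator is \emph{free} over the base, in the sense that every element of $N$ and every twist is realised by an automorphism. Concretely I would argue that the benign layers descend to $K$, i.e. that a normal generator can be chosen so that the cover $R_0$ is, up to the shift, independent of the arithmetic of $L_0/K$; functoriality of the $[\sigma]$-construction then extends each $h\in H$ canonically and multiplicatively over the tower, producing compatible sections and hence the complement $E$.

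The main obstacle is exactly this interaction between the action of $H=\Gal(L_0/K)$ on $L_0$ and the $\sigma$-shift on the benign generators: an arbitrary tower of Galois extensions $K\to L_0\to L_0(u_0)$ need not split, so the argument cannot be purely formal and must genuinely extract from benignity that the layers are disjoint from the extension $L_0/K$. I expect the delicate point to be establishing $E\cap L_0=K$ (equivalently, the finite-level splitting) without destroying the difference structure in the process; this is precisely where the normal-generator form of Babbitt's theorem \ref{babb-fields} and the disjointness packaged in \ref{norm-benign} are indispensable, and where a careless choice of complement would fail.
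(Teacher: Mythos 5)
Your reduction of the splitting to finding an intermediate difference field $E$ with $E\cap L_0=K$ and $EL_0=L_1$ is sound, but the proof stops exactly where the work begins: nothing in the proposal establishes $E\cap L_0=K$ for the candidate $E=K(u_0)_\sigma$, and you acknowledge as much yourself. Benignity of $L_1/L_0$ is a statement about the layers $L_0(\sigma^iu_0)$ being linearly disjoint \emph{over $L_0$}; it says nothing about how $u_0$ sits over $K$, and nothing prevents $K(u_0)_\sigma\cap L_0$ from being a proper extension of $K$ for a given normal generator. The paragraph asserting that ``the benign layers descend to $K$'' and that functoriality of the $[\sigma]$-construction ``extends each $h\in H$ canonically and multiplicatively'' is a restatement of the conclusion, not an argument. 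The finite-level reduction compounds the problem: the sequences $1\to\Gal(M_n/L_0)\to\Gal(M_n/K)\to\Gal(L_0/K)\to1$ do split a posteriori (push any section of the limit sequence forward along $\Gal(L_1/K)\to\Gal(M_n/K)$), but there is no direct reason for a finite extension of $\Gal(L_0/K)$ by a finite product of copies of a group to split, and truncating at level $n$ destroys precisely the infinite shift structure that makes the lemma true. So the compactness step, while valid, relocates the difficulty into finite quotients where it is, if anything, harder.

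The paper's proof avoids fields and complements altogether and uses the shift directly. Write $\Phi=()^\sigma$ on $G=\Gal(L_1/K)$, $H=\Gal(L_1/L_0)$, $G_0=\Gal(L_0/K)$ and $\pi:G\to G_0$. Benignity says $H$ is a direct product of infinitely many copies of a finite group which $\Phi$ shifts, whence $\bigcap_i\Phi^iH=(1)$. Choosing $n$ with $\Phi_0^n=1$ on the finite group $G_0$, each fibre $\pi^{-1}(g_0)$ (a coset of $H$) is carried into itself by $\Phi^n$, and $s(g_0)=\bigcap_i(\Phi^n)^i\bigl(\pi^{-1}(g_0)\bigr)$ is a single element; this $s$ is the desired section, and it is automatically $\Phi$-equivariant. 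If you want to salvage your approach, the fixed field of $s(G_0)$ is exactly the complement $E$ you were looking for, but you cannot obtain it by naively adjoining $u_0$ to $K$: the one place benignity is genuinely used is the contraction property $\bigcap_i\Phi^{ni}H=(1)$, and your outline never isolates it.
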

\begin{proof}
Let us start by using shorthand notation 
$H=\Gal(L_1/L_0)$, $G=\Gal(L_1/K)$, $G_0=\Gal(L_0/K)$ and $\pi$
for the projection $G\to G_0$. Let us denote  by $\Phi$ the operator
$()^\sigma:G\to G$. Since $L_1/L_0$ is benign, $H$ has the specific form of 
a direct product of infinitely many  copies of a finite group and $\Phi$ shifts
between the copies. Thus we have that $\cap_i\Phi^iH=(1)$. Writing $\Phi_0$
for the operator induced by $\Phi$ on $G_0$, since $G_0$ is finite, let us
fix some $n$ such that $\Phi_0^n=1$.
The section $s:G_0\to G$ is defined by
$$
s(g_0)=\cap_i(\Phi^n)^i(\pi^{-1}(g_0)).
$$
\end{proof}

\begin{lemma}\label{babsplit}
Let $(K,\sigma)\to(L,\tilde{\sigma})$ be a Galois extension of finite 
$\tilde{\sigma}$-type. Let $L_0=L_K$ be the core of $K$ in $L$.
The exact sequence
$$
1\to\Gal(L/L_0)\to\Gal(L/K)\to\Gal(L_0/K)\to 1
$$
is split.
\end{lemma}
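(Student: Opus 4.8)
The plan is to reduce the general Galois extension $(K,\sigma)\to(L,\tilde\sigma)$ of finite $\tilde\sigma$-type to the benign tower situation already handled, and then to bootstrap the splitting of the benign case into a splitting of the full sequence. First I would invoke Babbitt's Theorem \ref{babb-fields} applied to $(K,\sigma)\to(L,\tilde\sigma)$, which produces a tower
$$
(K,\sigma)\to(L_0,\sigma)\to(L_1,\sigma)\to\cdots\to(L_n,\sigma)\simeq(L,\tilde\sigma),
$$
with $L_0/K$ finite and every $L_{i+1}/L_i$ benign, and moreover identifies $L_0$ with the core $L_K$ of $K$ in $L$. Since passing to the inversive closure does not change any of the Galois groups in the exact sequence (the Galois group of a difference field extension coincides with that of its inversive closure, as recorded in the benign Galois definition), I may work with this tower and it suffices to split the sequence
$$
1\to\Gal(L/L_0)\to\Gal(L/K)\to\Gal(L_0/K)\to 1.
$$

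The key structural input is that $\Gal(L/L_0)$ is an iterated benign object: writing $H=\Gal(L/L_0)$, $G=\Gal(L/K)$, $G_0=\Gal(L_0/K)$, $\pi:G\to G_0$ the projection, and $\Phi=()^\sigma$ the twisting operator on $G$, the benign layers force $H$ to be (up to the usual limit) a restricted product of finite groups on which $\Phi$ acts by shifting, whence $\bigcap_i\Phi^i H=(1)$. This is precisely the hypothesis that made the section work in the previous lemma, where $L_1/L_0$ was a \emph{single} benign layer. My plan is therefore to mimic that construction: since $G_0$ is finite, fix $n$ with $\Phi_0^n=\mathrm{id}$ on $G_0$, and define the candidate section by
$$
s(g_0)=\bigcap_i(\Phi^n)^i\bigl(\pi^{-1}(g_0)\bigr),
$$
the nested intersection of the $\Phi^n$-translates of the fibre $\pi^{-1}(g_0)$. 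The conditions $\Phi_0^n=\mathrm{id}$ and $\bigcap_i\Phi^iH=(1)$ are exactly what guarantee that this intersection is a single element of $G$ and that $s$ is a $\Phi$-equivariant group homomorphism splitting $\pi$.

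The main obstacle I anticipate is verifying that $\bigcap_i\Phi^i H=(1)$ \emph{for the full iterated tower}, rather than for a single benign step. In the preceding lemma this was immediate because one benign extension gives $H$ as a product of copies of one finite group with a clean shift; here $H=\Gal(L/L_0)$ arises from stacking several benign layers $L_{i+1}/L_i$, so I would need to argue that the composite of benign Galois layers is still benign Galois in the sense that its group is a restricted product of finite groups with $\Phi$ shifting — so that the descending intersection of $\Phi$-powers still collapses to the identity. Granting that structural fact (which follows from the benign shape of each layer and the compatibility of the twisting operator across the tower), the definition of $s$ and the verification that it is a well-defined splitting go through verbatim as in the single-layer case, so I would present those checks only briefly and concentrate the argument on the structure of $H$ under $\Phi$.
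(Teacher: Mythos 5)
Your setup (invoke Babbitt's decomposition \ref{babb-fields}, pass to the inversive closure, reduce to splitting the sequence for the tower) matches the paper, and you correctly identify the crux as the behaviour of $\Phi=()^\sigma$ on $H=\Gal(L/L_0)$. The gap is in how you justify that crux. You claim the composite of the benign layers is ``still benign Galois in the sense that its group is a restricted product of finite groups with $\Phi$ shifting.'' That is false in general: $\Gal(L/L_0)$ is only an \emph{iterated extension} of the shift groups $\Gal(L_{i+1}/L_i)$, and an extension of one shift group by another need not be a direct product with shift. (If composites of benign extensions were again benign, Babbitt's theorem would always terminate after a single benign layer and the tower would be pointless.) What you actually need is only the weaker statement $\bigcap_i\Phi^iH=(1)$. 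That weaker statement is true, but proving it already forces an induction on the layers: given $h\in\bigcap_i\Phi^iH$, its image in $\Gal(L_1/L_0)$ lies in $\bigcap_i\Phi^i\Gal(L_1/L_0)=(1)$, so $h\in\Gal(L/L_1)$; then injectivity of the induced operator on $\Gal(L_1/L_0)$ pushes the preimages $h_i$ with $h=\Phi^i(h_i)$ into $\Gal(L/L_1)$ as well, so $h\in\bigcap_i\Phi^i\Gal(L/L_1)$, and one repeats down the tower. As written, your ``granting that structural fact'' leaves the only nontrivial point unproved, and the reason you offer for it does not hold.

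The paper avoids this entirely by running the induction at the level of sections rather than kernels: it observes that the proof of the single-layer lemma works for an arbitrary (not necessarily finite) intermediate extension together with a \emph{finite subgroup} of its Galois group, and then lifts a copy of $\Gal(L_0/K)$ through the tower one benign layer at a time, where at each step the kernel is a single shift group and $\bigcap_j\Phi^j(\cdot)=(1)$ is immediate from the benign form. Your one-shot global formula $s(g_0)=\bigcap_i(\Phi^n)^i(\pi^{-1}(g_0))$ can be repaired by supplying the induction above (plus a word on why the decreasing chain of nonempty closed subsets of the coset has nonempty intersection), but at that point the content of the two proofs essentially coincides; the layer-by-layer lifting is the cleaner packaging.
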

\begin{proof}
Note that the proof of the previous 
lemma works for an arbitrary $L_0/K$ and a finite subgroup $G_0$ of $\Gal(L_0/K)$.
Using Babbitt's decomposition, let
$$
(K,\sigma)\to (L_0,\sigma)\to(L_1,\sigma)\cdots\to (L_n,\sigma)\simeq(L,\sigma),
$$
be a tower of difference field extensions with $L_0/K$ finite and
all $L_{i+1}/L_i$ benign for $i\geq 0$. 
Using the previous lemma we can thus inductively `pull' a copy of $\Gal(L_0/K)$ through the above tower all the way up to $\Gal(L_n/K)$.
\end{proof}

\begin{definition}\label{diffcovfields}
Let $(K,T)\to(F,\Sigma)$ be an extension of difference fields, and let 
$(L,\Sigma)$ be the relative algebraic closure of $K$ in $F$.
We say that an extension $(K,T)\to(F,\Sigma)$ is a \emph{difference covering}
if $L/K$ is Galois and 
$\Sigma$ is a finite set of representatives of the isomorphism classes of
lifts of all $\tau\in T$ to $L$.
\end{definition}

\begin{proposition}\label{domcov}
Let $(K,\sigma)\to(F,\tilde{\sigma})$ be a separable difference field extension of finite
$\tilde{\sigma}$-type. Then it can be subsumed in a difference covering, i.e., there exists a difference field extension $(F,\tilde{\sigma})\to(\tilde{F},\tilde{\sigma})$ and 
an almost strict difference structure $\tilde{\Sigma}\ni\tilde{\sigma}$ on $\tilde{F}$
such that $(K,\sigma)\to(\tilde{F},\tilde{\Sigma})$ is
a difference covering.
\end{proposition}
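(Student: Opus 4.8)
The plan is to strip the problem down to the relative algebraic closure of $K$ in $F$, repair its possible failure to be Galois by passing to the Galois closure, and then extract the one genuinely difference-theoretic ingredient — finiteness of the resulting difference structure — from Babbitt's decomposition. The point is that the definition of a difference covering constrains only the relative algebraic closure, so the transcendental part of the extension is inert.

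First I would let $(M,\tilde{\sigma})$ be the relative algebraic closure of $K$ in $F$. Since $\tilde{\sigma}$ extends $\sigma$ and carries $K$-algebraic elements to $K$-algebraic elements, $M$ is $\tilde{\sigma}$-stable, and $(M,\tilde{\sigma})/(K,\sigma)$ is a $\sigma$-separable algebraic extension of finite $\sigma$-type. Fixing a separable closure $K^{\mathrm{sep}}\supseteq F$ and an extension $\Phi\in\aut(K^{\mathrm{sep}})$ of $\tilde{\sigma}\restriction M$ (which exists and is an automorphism because $K^{\mathrm{sep}}$ is intrinsically $\Phi$-stable), I translate the situation to the absolute Galois group $\mathcal{G}=\Gal(K^{\mathrm{sep}}/K)$: write $\mathcal{H}=\Gal(K^{\mathrm{sep}}/M)$ and let $\phi(g)=\Phi g\Phi^{-1}$, a continuous automorphism of $\mathcal{G}$. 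A direct computation shows that the $\tilde{\sigma}$-stability of $M$ is equivalent to the inclusion $\mathcal{H}\subseteq\phi(\mathcal{H})$.

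Next I pass to the Galois closure $N$ of $M/K$, corresponding to the normal core $\mathcal{N}=\bigcap_{g}g\mathcal{H}g^{-1}$. The key observation is that forming the normal core is monotone and $\phi$-equivariant, so $\mathcal{H}\subseteq\phi(\mathcal{H})$ forces $\mathcal{N}\subseteq\phi(\mathcal{N})$; retranslating, $\Phi(N)\subseteq N$, so the Galois closure is \emph{automatically} stable under the chosen extension of $\tilde{\sigma}$. Moreover every lift $\tau$ of $\sigma$ to $N$ differs from $\Phi\restriction N$ by post-composition with an element of $\Gal(N/K)$ — since $\Phi^{-1}\tau$ restricts to the identity on $K$, normality makes it a $K$-automorphism of $N$ — so all lifts preserve $N$, the set of lifts is a torsor under $\Gal(N/K)$, and two lifts are isomorphic precisely when they are $\sigma$-twisted conjugate. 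I then set $L:=N$, enlarge $F$ to a difference field $(\tilde{F},\tilde{\sigma})$ with $\tilde{F}=F\cdot N$ by gluing $\tilde{\sigma}$ and $\Phi$ along $F\cap N=M$, and check that $L$ is the relative algebraic closure of $K$ in $\tilde{F}$ and is Galois over $K$.

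It remains to produce a \emph{finite} difference structure $\tilde{\Sigma}$, and this is where I expect the main obstacle: a priori $\Gal(N/K)$ is an infinite profinite group and could carry infinitely many $\sigma$-twisted conjugacy classes. Here I invoke Babbitt's theorem \ref{babb-fields}, which decomposes $N/K$ through a finite Galois $N_0/K$ followed by benign Galois steps; by the splitting Lemma \ref{babsplit} one has a split exact sequence $1\to\Gal(N/N_0)\to\Gal(N/K)\to\Gal(N_0/K)\to 1$ with $\Gal(N/N_0)$ benign. In a benign Galois group any two elements are $\sigma$-twisted conjugate — the computation recorded just after the definition of benign Galois extensions — so the benign layers collapse and the $\sigma$-twisted conjugacy classes of $\Gal(N/K)$ are governed by those of the finite group $\Gal(N_0/K)$, hence finite in number. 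Choosing representatives, including $\tilde{\sigma}$ itself, and enlarging $\tilde{F}$ once more so that each of the finitely many representatives extends to an endomorphism of $\tilde{F}$ (possible because $F/M$ is separable), I obtain an almost-strict difference structure $\tilde{\Sigma}\ni\tilde{\sigma}$ whose restrictions to $L$ represent all isomorphism classes of lifts of $\sigma$. By construction $(K,\sigma)\to(\tilde{F},\tilde{\Sigma})$ is a difference covering subsuming $(F,\tilde{\sigma})$. The hard part is exactly the finiteness just discussed; the Galois-stability one might fear — the $K$-conjugates of $M$ wandering under $\sigma$ — dissolves automatically through the normal-core computation.
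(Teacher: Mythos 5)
Your proposal is correct and follows essentially the same route as the paper: pass to the relative algebraic closure and its Galois closure, use Babbitt's decomposition together with the splitting Lemma \ref{babsplit} to see that the isomorphism classes of lifts of $\sigma$ are controlled by the finite core Galois group (the benign layers contributing nothing, since their elements are all $()^\sigma$-conjugate), and then extend the finitely many representative lifts across the transcendental part. The only cosmetic difference is in the last step, where the paper makes the enlargement of $\tilde{F}$ explicit via regularity of $F\otimes_K\tilde{L}$ over $\tilde{L}$ and a Weil-restriction/product construction $\prod_{g\in G}X^g$, whereas you gesture at the same move by ``enlarging $\tilde{F}$ once more.''
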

\begin{proof}
Let $(L,\sigma)$ be the relative algebraic closure of $K$ in $F$. Let $\tilde{L}$
be its normal closure. Let $F'$ be 
the fraction field of
$F\otimes_K\tilde{L}$. %
 Let $L_0\subseteq\tilde{L}_0$, be the cores of $(K,\sigma)$ in $L$ and l$\tilde{L}$,
 respectively.
 We apply \ref{babsplit} to $\tilde{L}/K$ (resp.\ $\tilde{L}/L$) to
find a copy of $G=\Gal(\tilde{L}_0/K)$ (resp.\ $H=\Gal(\tilde{L}_0/L_0)$) in 
$\Gal(\tilde{L}/K)$ (resp.\ $\Gal(\tilde{L}/L)$) and set 
$\tilde{\Sigma}=G\tilde{\sigma}$ (resp.\ $\Sigma=H\tilde{\sigma}$). 
It is known that these represent all isomorphism classes of lifts of $\sigma$ from
$K$ to 
to $\tilde{L}$ (resp.\ from $L$ to $\tilde{L}$). 
We obtain difference coverings
$(L,\tilde{\sigma})\to (\tilde{L},\Sigma)$ and $(K,\sigma)\to (\tilde{L},\tilde{\Sigma})$.
To finish, since $F'$ is regular over $\tilde{L}$, we can lift $\tilde{\Sigma}$
to an extension $\tilde{F}$ of $F'$ by `base field extensions'.  
In more conceptual terms, if
we think of $F'$ as a function field of a geometrically irreducible difference scheme
$X$ over $L$, let $X_0$ be obtained by Weil restriction from $\tilde{L}$ to 
$\tilde{L}^G$. Then the base change of $X_0$ back to $\tilde{L}$ is
isomorphic to $\prod_{g\in G}X^g$, which clearly carries a $\tilde{\Sigma}$-structure
and dominates $X$.
\end{proof}

\begin{definition}
Let $(X,\Sigma)\to (Y,T)$ be a generically dominant morphism of integral
difference schemes. We shall say that it is a \emph{generic difference covering}
if the corresponding inclusion of function fields $(\kk(Y),T)\to(\kk(X),\Sigma)$
is a difference covering of fields.
\end{definition}

About a half of the proof or \ref{domcov} suffices to deduce the following.
\begin{lemma}\label{gal-cl}
If $(X,\Sigma)\to(Y,T)$ is separable algebraic, then it is dominated by
the quasi-Galois closure $(\tilde{X},\tilde{\Sigma})\to (Y,T)$ of $X$ over $Y$,
i.e., the $(Y,T)$-morphism $(\tilde{X},\tilde{\Sigma})\to (X,\Sigma)$ is
a generic difference covering.
\end{lemma}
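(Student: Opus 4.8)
The plan is to pass to the generic points and to re-run only the field-theoretic half of the argument used for \ref{domcov}; the `base field extension' part of that proof is vacuous here, since $(X,\Sigma)\to(Y,T)$ being algebraic means there is no transcendental direction to propagate. First I would reduce to function fields. Write $K=\kk(Y)$ with its structure $T$ and $L=\kk(X)$ with $\Sigma$; by hypothesis $L/K$ is separable algebraic and each element of $\Sigma$ restricts on $K$ to an element of $T$. The quasi-Galois closure $(\tilde X,\tilde\Sigma)$ has function field $\tilde L$ equal to the normal closure of $L$ over $K$, so $\tilde L/K$ is separable and normal, hence Galois, and therefore $\tilde L/L$ is Galois as well. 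Since $X\to Y$ is of finite transformal type and $L/K$ is algebraic, $\tilde L$ is generated over $K$ by the finitely many $K$-conjugates of a finite set of $\sigma$-generators of $L$ together with all their $\tilde\sigma$-translates, so for any $\sigma\in\Sigma$ and any lift $\tilde\sigma$ the extension $(L,\sigma)\to(\tilde L,\tilde\sigma)$ is Galois of finite $\tilde\sigma$-type. By the definition of a generic difference covering it then suffices to check that $(\kk(X),\Sigma)\to(\kk(\tilde X),\tilde\Sigma)$ is a difference covering of fields in the sense of \ref{diffcovfields}: the relative algebraic closure of $L$ in $\tilde L$ is all of $\tilde L$, which is Galois over $L$, so the only substantive point is that $\tilde\Sigma$ is a finite set of representatives of the isomorphism classes of lifts of the elements of $\Sigma$ to $\tilde L$.

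Next I would construct $\tilde\Sigma$ exactly as in \ref{domcov}. Fixing $\sigma\in\Sigma$ and a lift $\tilde\sigma\in\tilde\Sigma$, I apply Babbitt's splitting lemma \ref{babsplit} to $(L,\sigma)\to(\tilde L,\tilde\sigma)$: letting $L'=\tilde L_L$ be the core of $L$ in $\tilde L$, which is finite over $L$, the sequence $1\to\Gal(\tilde L/L')\to\Gal(\tilde L/L)\to\Gal(L'/L)\to 1$ is split, so I may fix a copy $H\cong\Gal(L'/L)$ inside $\Gal(\tilde L/L)$ and declare the part of $\tilde\Sigma$ lying over $\sigma$ to be the finite coset $H\tilde\sigma$; the full structure $\tilde\Sigma$ is then recovered from the coset structure of the $\diff$-structures involved. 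The splitting is essential precisely because it realises the finite group $\Gal(L'/L)$ by \emph{honest} automorphisms of $\tilde L$, rather than as a mere quotient.

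Finally I would verify that $H\tilde\sigma$ exhausts the isomorphism classes. Every lift of $\sigma$ to $\tilde L$ has the form $h\tilde\sigma$ with $h\in\Gal(\tilde L/L)$, and $h\tilde\sigma,h'\tilde\sigma$ are isomorphic over $(L,\sigma)$ exactly when $h'=g^{-1}hg^{\tilde\sigma}$ for some $g$, i.e.\ when $h,h'$ are $()^{\tilde\sigma}$-conjugate. Above the core, $\tilde L/L'$ is a tower of benign extensions, and by the $()^\sigma$-conjugacy property recorded just after the definition of a benign extension, every element of $\Gal(\tilde L/L')$ is $()^{\tilde\sigma}$-conjugate to the identity, applying that property layer by layer through the Babbitt tower. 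Hence modulo the kernel $\Gal(\tilde L/L')$ the $()^{\tilde\sigma}$-conjugacy classes are detected already in the finite quotient $\Gal(L'/L)$, so the chosen copy $H$ meets every class and $H\tilde\sigma$ is a finite set of representatives; this yields the difference covering $(\kk(X),\Sigma)\to(\kk(\tilde X),\tilde\Sigma)$ and hence the lemma. I expect this last step to be the main obstacle: the group $\Gal(\tilde L/L)$ is typically infinite, and it is only through Babbitt's decomposition, namely the benign structure of the kernel together with the splitting of \ref{babsplit}, that the infinitely many set-theoretic lifts collapse to the finitely many isomorphism classes carried by $H\tilde\sigma$; making the passage from `every kernel element is $()^{\tilde\sigma}$-conjugate to $1$' to `the classes inject into $\Gal(L'/L)$' fully rigorous is the delicate bookkeeping, and it is exactly the content taken as known in \ref{domcov}.
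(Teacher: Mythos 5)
Your proposal is correct and follows essentially the same route as the paper, which proves \ref{gal-cl} by observing that the algebraic half of the proof of \ref{domcov} (normal closure, core, the splitting of \ref{babsplit}, and the fact that the coset $H\tilde\sigma$ represents all isomorphism classes of lifts) applies verbatim, the regular `base field extension' half being vacuous here. You in fact supply more detail than the paper does on why $H\tilde\sigma$ exhausts the isomorphism classes, a point the paper dispatches with ``it is known that these represent all isomorphism classes of lifts.''
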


\begin{proposition}\label{subsgencov}
Let $(X,\Sigma)\to (Y,T)$ be a generically dominant generically smooth
 morphism of integral normal
almost strict difference schemes. Then it can be subsumed in a generic difference
covering in the sense of a diagram
$$
\begin{tikzpicture} 
\matrix(m)[matrix of math nodes, row sep=1.5em, column sep=.5em] %
 {                       & |(P)|{(\tilde{X},\tilde{\Sigma})} & [1em] |(2)| (Z,\Sigma_Z)         \\
 |(1)|{(X,\Sigma)} &                         &            \\
                & |(h)|{(Y,T)}            &\\}; 
\path[->,font=\scriptsize,>=to, thin]
(P) edge  (1) edge (2) edge (h)
(1) edge   (h)
(2) edge  (h);
\end{tikzpicture}
$$
where $(\tilde{X},\tilde{\Sigma})\to (Y,T)$ is  the quasi-Galois closure of 
$(X,\Sigma)\to (Y,T)$, and 
$(Z,\Sigma_Z)\to (Y,T)$ is a generic difference covering.
\end{proposition}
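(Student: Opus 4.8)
The plan is to recognise \ref{subsgencov} as the geometric globalisation of the field-theoretic \ref{domcov}: I pass to the generic points, apply \ref{domcov} to the extension of function fields, and then geometrise the resulting difference coverings by normalising $(Y,T)$. Every object in the conclusion --- the quasi-Galois closure $(\tilde{X},\tilde{\Sigma})$, the generic difference covering $(Z,\Sigma_Z)$, and all the dominance relations --- is determined at the generic point, so the whole content lives in the extension of function fields $(\kk(Y),T)\to(\kk(X),\Sigma)$. Since the given morphism $(X,\Sigma)\to(Y,T)$ is generically smooth, this extension is separable of finite $\tilde{\sigma}$-type, which is exactly the hypothesis of \ref{domcov}.

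First I would apply \ref{domcov} to $(\kk(Y),T)\to(\kk(X),\Sigma)$, writing $K=\kk(Y)$ and $F=\kk(X)$. Its proof produces the relative algebraic closure $L$ of $K$ in $F$, the normal closure $\tilde{L}$ of $L$, the cores $L_0\subseteq\tilde{L}_0$, and --- via the splitting \ref{babsplit} applied to $\tilde{L}/K$ --- a copy of $G=\Gal(\tilde{L}_0/K)$ inside $\Gal(\tilde{L}/K)$ yielding a $\diff$-structure $\tilde{\Sigma}=G\tilde{\sigma}$ on $\tilde{L}$. This makes $(K,T)\to(\tilde{L},\tilde{\Sigma})$ a difference covering of fields, and the same argument lifts $\tilde{\Sigma}$ to an extension $\tilde{F}$ of $F'=\operatorname{Frac}(F\otimes_K\tilde{L})$, giving a difference covering $(K,T)\to(\tilde{F},\tilde{\Sigma})$ whose function field dominates both $F$ and $\tilde{L}$.

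Next I would geometrise. I set $(\tilde{X},\tilde{\Sigma})$ to be the normalisation of $(Y,T)$ in $(\tilde{F},\tilde{\Sigma})$; this is by construction the quasi-Galois closure of $(X,\Sigma)\to(Y,T)$, and the inclusion $F\subseteq\tilde{F}$ furnishes the dominant morphism $(\tilde{X},\tilde{\Sigma})\to(X,\Sigma)$ exactly as in \ref{gal-cl}. I set $(Z,\Sigma_Z)$ to be the normalisation of $(Y,T)$ in $(\tilde{L},\tilde{\Sigma})$; since $(K,T)\to(\tilde{L},\tilde{\Sigma})$ is a difference covering of fields, the morphism $(Z,\Sigma_Z)\to(Y,T)$ is a generic difference covering by definition. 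Finally, the inclusion $\tilde{L}\subseteq\tilde{F}$ respecting the two $\tilde{\Sigma}$-structures induces $(\tilde{X},\tilde{\Sigma})\to(Z,\Sigma_Z)$, and every arrow commutes over $(Y,T)$ because they all come from inclusions of subfields of $\tilde{F}$ fixing $K$. This assembles the required diagram.

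The hard part will be the faithful transfer of the field-level construction of \ref{domcov} to integral normal almost-strict difference schemes. Specifically, one must check that the normalisations carry the prescribed $\diff$-structures $\tilde{\Sigma}$ and $\Sigma_Z$ compatibly, so that the resulting maps are genuine morphisms of almost-strict difference schemes and not merely of the underlying schemes; that normalising $(Y,T)$ in $\tilde{F}$ produces an \emph{integral} cover dominating $X$ rather than a disconnected one; and that the regular extension $F/L$ is correctly separated from the algebraic part so that the compositum $F'$ remains irreducible. The regularity of $F'$ over $\tilde{L}$, already used at the end of the proof of \ref{domcov}, is precisely what guarantees both this irreducibility and the existence of the lift of $\tilde{\Sigma}$; carrying it through the normalisation step is the delicate point.
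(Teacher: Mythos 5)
Your overall strategy---pass to the function fields, invoke \ref{domcov}, then geometrise by normalising $(Y,T)$---is the same as the paper's, whose proof explicitly leans on the proof of \ref{domcov} to reduce to the case where the relative algebraic closure $L$ of $\kk(Y)$ in $\kk(X)$ is Galois and to equip $L$ with a structure $\Sigma$ representing all lifts. But the two arguments part ways at precisely the step you flag as delicate. The paper never normalises $Y$ in the transcendental field $\tilde{F}$: it normalises $Y$ only in the algebraic part $L$, obtaining $(\tilde{Y},\Sigma)$, and then manufactures the generic difference covering as a relative product pushforward $\iota_!(X/\tilde{Y})$ for $\iota:\{\sigma\}\to\Sigma$---a fibre product of copies of $X$ over $\tilde{Y}$, imitating \ref{tensindinj}---so that the $\Sigma$-action and all the arrows of the diagram come for free from the universal properties of $\iota^*$ and $\iota_!$ established in Section~\ref{sect:bi-fib}. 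This is the geometric incarnation of the Weil-restriction sentence at the end of the proof of \ref{domcov}, and it disposes of your ``delicate point'' (does the normalisation carry the prescribed difference structure, and are the maps genuine morphisms of difference schemes?) rather than deferring it. As written, your proposal identifies the right difficulty but does not supply the device that resolves it.

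A second, more substantive divergence: you take $(Z,\Sigma_Z)$ to be the normalisation of $Y$ in the algebraic extension $\tilde{L}$, a finite cover of $Y$ that does not dominate $X$. The paper's generic difference covering is the composite $\iota_!(X/\tilde{Y})\to\tilde{Y}\to Y$, whose total space has function field the fraction field of the tensor product of the conjugates of $\kk(X)$ over $L$ and hence contains $\kk(X)$. That choice is what gives ``subsumed'' its content, in parallel with \ref{domcov} where the covering field $\tilde{F}$ contains $F$; in your version the subsumption is carried entirely by $\tilde{X}$, and $Z$ records only the algebraic part of the cover. Your construction does literally populate the displayed diagram, but it yields a strictly weaker object than the one the paper constructs and intends to use.
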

\begin{proof}
In order to simplify notation, let us treat the case of strict difference
scheme morphism $(X,\sigma)\to(Y,\sigma)$. 
Let $L$ be the relative algebraic closure
of $\kk(Y)$ inside $\kk(X)$. As in the proof of \ref{domcov}, we may reduce to
the case where $L/\kk(Y)$ is Galois, and we find a difference structure $\Sigma$
on $L$ which represents all lifts of $\sigma$ to the core of $\kk(Y)$ in $L$.
Let $(\tilde{Y},\Sigma)$ be the normalisation of $(Y,\sigma)$ in $(L,\sigma)$.
Our task is to lift the structure $\Sigma$ to a difference scheme as closely related
to $(X,\sigma)$ as possible.
Let $\iota:\{\sigma\}\to\Sigma$. We construct a $\Sigma$-difference scheme
$\iota_!(X/\tilde{Y})$ by imitating the definition \ref{tensindinj} except
that the underlying space is the fibre product of copies of $X$ over $\tilde{Y}$.
Using the universal properties of functors $\iota^*$ and $\iota_!$,
we obtain a diagram
$$
 \begin{tikzpicture} 
 [cross line/.style={preaction={draw=white, -,
line width=3pt}}]
\matrix(m)[matrix of math nodes, %
row sep=1em, column sep=1em, text height=1.6ex, text depth=0.25ex]
 { 
				& |(5m)|{\iota_!(X/\tilde{Y})}	&			\\[.2em]
|(4l)|{X}			&						&|(4r)|{\iota_!X}\\[.5em]
				& |(3m)|{\tilde{Y}}			&			\\[.2em]
|(2l)|{\iota^*\tilde{Y}}	&					&|(2r)|{\iota_!\iota^*\tilde{Y}}\\[1.2em]
|(1l)|{Y}			&						&|(1r)|{\iota_!Y}\\
};
\path[->,font=\scriptsize,>=to, thin]
(5m) edge (3m) edge (4r)
(4l) edge (5m) edge[cross line] (4r) edge (2l)
(3m) edge[bend left=25] (1l) edge (2r)
(2l) edge (3m) edge[cross line] (2r) edge (1l)
(4r) edge (2r)
(2r) edge (1r)
(1l) edge (1r)
;
\end{tikzpicture}
$$
where the composite $({\iota_!(X/\tilde{Y})},\Sigma)\to(\tilde{Y},\Sigma)\to (Y,\sigma)$
is a generic difference covering.
\end{proof}

\section{Bi-fibered structure of the category of difference schemes}\label{sect:bi-fib}

\begin{definition}[Pullback]
Let $(Y,T)$ be a difference scheme and let $\psi:\Sigma\to T$ be a $\diff$-morphism.
The \emph{pullback} of $Y$ with respect to $\psi$ is defined as 
\[
\psi^*Y=\cup_{\sigma\in\Sigma}Y^{\psi(\sigma)},
\]
with its induced structure as a $\Sigma$-difference scheme, with $\sigma\in\Sigma$
acting as $\psi(\sigma)$ on $Y$. There is a natural morphism
$$
\psi^*Y\to Y.
$$
\end{definition}

The following definitions make sense for full difference structures $\Sigma$, recall
\cite[\ref{def-regular}]{ive-tch}.

\begin{definition}\label{indinj}
Let $(X_0,\Sigma_0)$ be a difference scheme and let 
$\iota:\Sigma_0\hookrightarrow\Sigma$ where $\Sigma$ has generalised conjugation
so that for each $\sigma,\tau\in\Sigma$, there exists a $\tau'$ such that
$\lrexp{\tau}{\sigma}{{\tau'}}\in\Sigma_0$. Let $\tau_i$, $i\in I$ be the
representatives of $\Sigma/\Sigma_0$, i.e., for each $\sigma\in\Sigma$, and
each $\tau_i$, there is a unique $\tau_j$ with $\lrexp{{\tau_j}}{\sigma}{{\tau_i}}\in\Sigma_0$. The assignment $i\mapsto j$ defines a permutation $\bar{\sigma}$ of $I$.
Consider the space
$$
\coprod_{i\in I} X_i,
$$
where each $X_i$ is a copy of $X_0$ (that should be thought of as $\tau_iX_0$)
and $\sigma\in\Sigma$ takes $X_i$ to $X_{\bar{\sigma}(i)}$, and acts as 
$\lrexp{{\tau_{\bar{\sigma}(i)}}}{\sigma}{{\tau_i}}$ on the associated copy of $X$. 
The underlying space of the (coproduct) pushforward is
$$
\iota_*X_0=\bigcup_{\sigma\in\Sigma}\Bigl(\coprod_{i\in I} X_i\Bigr)^\sigma=
\coprod_i\bigcup_{\sigma^{\tau_i}\in\Sigma_0}X_0^{\sigma^{\tau_i}},
$$
and the action of $\Sigma$ is inherited from $\coprod_{i\in I} X_i$.

There is a natural (inclusion) morphism $$(X_0,\Sigma_0)\to (\iota_*X_0,\Sigma).$$
\end{definition}
Note that $\iota_*X_0$ can be set-wise defined by using just the basic 
$\diff$-structure on $\Sigma$, but the action of $\Sigma$ requires 
generalised conjugation.

\begin{definition}\label{tensindinj}
Let $(X_0,\Sigma_0)$ be a difference scheme and let 
$\iota:\Sigma_0\hookrightarrow\Sigma$ and $I$ satisfy the assumptions of
\ref{indinj}.
For $\sigma\in\Sigma$, let $\bar{\sigma}$ denote the permutation of $I$
defined by the requirement that $\bar{\sigma}(i)$ is the unique element of $I$
satisfying
$$
\lrexp{{\tau_{\bar{\sigma}(i)}}}{\sigma}{{\tau_i}}\in\Sigma_0.
$$
Consider the space
$$
\prod_{i\in I} X_i,
$$
where each $X_i$ is a copy of $X_0$, which can be identified  with $X_0^I$, the
space of functions $f:I\to\coprod_{i\in I}X_i$ with the property $f(i)\in X_i$.
We define the action in a natural way with respect to loc.~cit., 
$$
(\sigma f)(i)=\lrexp{{\tau_{\bar{\sigma}(i)}}}{\sigma}{{\tau_i}}(f(\bar{\sigma}(i))).
$$
The underlying space of the (product) pushforward is
$$
\iota_!X_0=\bigcup_{\sigma\in\Sigma}\Bigl(\prod_{i\in I} X_i\Bigr)^\sigma,
$$
and the action of $\Sigma$ is inherited from $\prod_{i\in I} X_i$.

There is a natural (diagonal) morphism $$(X_0,\Sigma_0)\to (\iota_!X_0,\Sigma).$$
\end{definition}

\begin{definition}\label{indsurj}
Let $(X,\Sigma)$ be a difference scheme and let $\pi:\Sigma\to T$ be
a $\diff$-morphism such that there exists a finite group $K$ acting faithfully
on $\Sigma$ such that $\pi$ can be identified with the canonical projection
$\Sigma\to\Sigma/K=T$. 
We define 
$$
\pi_*(X,\Sigma)=(X,\Sigma)/K,
$$
considered as a $T$-difference scheme.
There is an obvious quotient morphism
$$
(X,\Sigma)\to (\pi_*X,T).
$$
\end{definition}

\begin{remark}
Let $\pi:\Sigma\to T$ be as in \ref{indsurj}. We shall not need the functor
$\pi_!$ in the sequel, but we give an idea of its construction on an affine model.
Let $(R,\Sigma)$ be a difference algebra over a difference field $(k,\varphi)$. 
We let 
$$
\pi_!R=R_K=k\otimes_{k[K]}R,
$$
the ring of $K$-coinvariants, with its natural $T$-action. Both natural morphisms
$R\to k\otimes_{k[K]}R$ and $k\otimes_{k[K]}R\to R$ are used to prove the
required adjunction below.
\end{remark}

\begin{definition}\label{induction}
Let $(X,\Sigma)$ be a difference scheme and let $\psi:\Sigma\to T$ be
a $\diff$-morphism which is a composite of $\diff$-morphisms satisfying
the requirements of \ref{indinj} or \ref{indsurj}, $\psi=\psi_1\circ\cdots\psi_n$.
We define
$$
\psi_*X=\psi_{1*}\cdots\psi_{n*}X\ \ \text{ and }\ \ \psi_!X=\psi_{1!}\cdots\psi_{n!}X
$$
and there are obvious natural  morphisms
$$
X\to\psi_*X\ \ \text{ and }\ \ X\to\psi_!X.
$$
\end{definition}

\begin{theorem}
Let $\psi:\Sigma\to T$ be a $\diff$-morphism as in \ref{induction}.
\begin{enumerate}
\item\label{indjen}
The functor $\psi_*$ is left adjoint to $\psi^*$, i.e.,
$$
\Hom_{\Sigma}(X,\psi^*Y)\simeq \Hom_{T}(\psi_*X,Y),
$$
functorially in $(X,\Sigma)$ and $(Y,T)$.
\item\label{indva}
The functor $\psi_!$ is right adjoint to $\psi^*$, i.e.,
$$
\Hom_{\Sigma}(\psi^*Y,X)\simeq \Hom_{T}(Y,\psi_!X),
$$
functorially in $(X,\Sigma)$ and $(Y,T)$.
\end{enumerate}
\end{theorem}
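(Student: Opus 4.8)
The plan is to reduce the statement to the two elementary building blocks of Definition~\ref{induction} and then propagate the adjunctions through composites by pure category theory. First I would record that pullback is functorial in the $\diff$-morphism: for a composite $\psi=\psi_1\circ\cdots\circ\psi_n$ one checks directly from the defining formula $\psi^*Y=\cup_{\sigma\in\Sigma}Y^{\psi(\sigma)}$ that $\psi^*\simeq\psi_n^*\circ\cdots\circ\psi_1^*$, since a point fixed by $\psi(\sigma)=\psi_1(\cdots\psi_n(\sigma))$ is matched stepwise and $\sigma$ acts as $\psi(\sigma)$ on either side. Because a composite of left (resp.\ right) adjoints is left (resp.\ right) adjoint to the composite of the corresponding pullbacks, and because $\psi_*=\psi_{1*}\cdots\psi_{n*}$ and $\psi_!=\psi_{1!}\cdots\psi_{n!}$ are defined as exactly the matching composites, it suffices to prove both adjunctions when $\psi$ is a single morphism of one of the two types \ref{indinj}/\ref{tensindinj} or \ref{indsurj}. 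I would also note that, since a left (resp.\ right) adjoint of $\psi^*$ is unique up to canonical isomorphism and $\psi^*$ depends only on $\psi$, this simultaneously shows $\psi_*$ and $\psi_!$ to be independent of the chosen decomposition. Throughout, the natural morphisms supplied with each construction are the (co)Cartesian structure maps of the bifibration set up in this section: a morphism over $\iota$ from $(X_0,\Sigma_0)$ to a $\Sigma$-scheme $Z$ is the same datum as a $\Sigma_0$-morphism $X_0\to\iota^*Z$, and I will use this translation freely.

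For the injective type $\iota\colon\Sigma_0\hookrightarrow\Sigma$ of \ref{indinj}, I would prove part~\ref{indjen} as a twisted Frobenius reciprocity. The natural inclusion $(X_0,\Sigma_0)\to(\iota_*X_0,\Sigma)$ corresponds to the unit $\eta\colon X_0\to\iota^*\iota_*X_0$, and I would verify its universal property directly: given a $\Sigma$-scheme $Y$ and a $\Sigma_0$-morphism $g\colon X_0\to\iota^*Y$, there is exactly one extension $\bar g\colon\iota_*X_0\to Y$, defined on the copy $\tau_iX_0$ by $g$ followed by the action of $\tau_i\in\Sigma$ on $Y$. Here generalised conjugation is precisely what makes $\bar g$ equivariant, as the action $\lrexp{{\tau_{\bar{\sigma}(i)}}}{\sigma}{{\tau_i}}\in\Sigma_0$ on the $i$-th copy is converted into the $\Sigma$-action on $Y$; one then checks $\bar g$ is compatible with the decomposition $\iota_*X_0=\coprod_i\bigcup_{\sigma^{\tau_i}\in\Sigma_0}X_0^{\sigma^{\tau_i}}$ and that $g\mapsto\bar g$ is inverse to restriction along $\eta$. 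Dually, for part~\ref{indva} I would use the product description $\iota_!X_0=\cup_{\sigma\in\Sigma}(\prod_{i\in I}X_i)^\sigma$ together with the projection $\iota^*\iota_!X_0\to X_0$ onto the identity copy as counit: a $\Sigma$-morphism $Y\to\iota_!X_0$ is determined by, and may be freely reassembled from, its components indexed by $I$, the twisted diagonal action of \ref{tensindinj} pinning these components down via the same generalised-conjugation bookkeeping. Naturality in both variables then follows formally from the universal property of the unit/counit.

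For the surjective type $\pi\colon\Sigma\to\Sigma/K=T$ of \ref{indsurj}, part~\ref{indjen} is the universal property of the quotient difference scheme: by the construction of $(X,\Sigma)/K$ in \cite{ive-tch}, a $T$-morphism $\pi_*X=(X,\Sigma)/K\to Y$ is the same as a $K$-invariant morphism $X\to Y$, and such invariant morphisms are precisely the $\Sigma$-morphisms $X\to\pi^*Y$, since the $\Sigma$-action on $\pi^*Y$ is the pullback of the $T$-action along $\pi$. For part~\ref{indva} I would pass to an affine model and use the coinvariants $\pi_!R=k\otimes_{k[K]}R$ from the remark following \ref{indsurj}: the two natural ring maps $R\to k\otimes_{k[K]}R\to R$ furnish the unit and counit, and the bijection $\Hom_\Sigma(\pi^*Y,X)\simeq\Hom_T(Y,\pi_!X)$ follows from the standard coinvariants adjunction for the finite group $K$, after checking it respects the difference structure and glues over an affine cover.

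The main obstacle, I expect, lies in the two $\,!\,$-functors. For $\iota_!$ the delicate point is verifying that the twisted diagonal action in \ref{tensindinj} really turns the naive product into a right adjoint: one must show that the equivariance constraints cut $\prod_{i\in I}X_i$ down to exactly the datum of a single $\Sigma_0$-equivariant component, which is the content of the fixed-point union $\cup_{\sigma\in\Sigma}(\prod_{i\in I}X_i)^\sigma$ and requires careful use of the permutations $\bar\sigma$ and of generalised conjugation. For $\pi_!$ the obstacle is that coinvariants are defined only on affine models, so the adjunction must be shown to be independent of the presentation and to glue. Throughout, the recurring technical check is that every bijection respects the passage $\cup_{\sigma}(-)^\sigma$ to genuine difference schemes, so that the constructed maps are morphisms of difference schemes and not merely of the ambient schemes-with-action.
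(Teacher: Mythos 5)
Your proposal is correct and follows essentially the same route as the paper: reduce to the two elementary types of $\diff$-morphism, exhibit the unit and counit explicitly (inclusion of the identity copy into $\iota^*\iota_*$, the copy-wise map $\iota_*\iota^*X\to X$ via the $\tau_i$-actions, the twisted diagonal into $\iota_!\iota^*$, projection onto the first factor for $\iota^*\iota_!$, and the quotient map for $\pi$), and verify the triangle identities. Your additional care with the $\pi_!$ case via $K$-coinvariants actually goes slightly beyond the paper, which only sketches that construction in a remark since $\pi_!$ is never used in the sequel.
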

\begin{proof}
\noindent{\bf (\ref{indjen})} 
Suppose we have a difference scheme $(X,\Sigma)$ and let
$\iota:\Sigma_0\hookrightarrow\Sigma$ satisfying the assumptions of \ref{indinj}.
Writing $(X_0,\Sigma_0)=\iota^*X$ and adopting the notation from loc.~cit., 
there is a natural morphism $\iota_*\iota^*X\to X$, induced by the morphism
$\coprod_iX_i\to X$, taking the $i$-th copy
of $X_0$ to $\tau_i(X_0)$. Since $\iota^*\iota_*Y$ is a disjoint union of copies
of $(Y,\Sigma_0)$, an obvious morphism $(Y,\Sigma_0)\to \iota^*\iota_*Y$ is
the inclusion onto the first copy. This makes it easy to verify that the resulting adjunctions
$$
\iota_*\iota^*\to 1\ \ \text{ and }\ \ 1\to\iota^*\iota_*
$$
indeed satisfy the required unit-counit identities for the required adjunction to hold.
When $\pi:\Sigma\to T$ satisfies the requirements of \ref{indsurj}, 
$\pi_*\pi^*\to 1$ is an isomorphism and $1\to\pi^*\pi_*$  is essentially the 
quotient morphism, so the unit-counit relations are easily verified.

\noindent{\bf (\ref{indva})} With the above notation, a natural morphism
$X\to\iota_!\iota^*X$ is obtained as a restriction of the twisted diagonal embedding
$X\to\prod_i X_i$, where the $X_i$ are copies of $X$ and the morphism is
$x\mapsto (\tau_i(x))$. For $(Y,\Sigma_0)$, a natural morphism
$\iota^*\iota_!Y\to Y$ is the projection on the first factor, $\iota^*\iota_!Y$ being
a direct product of copies of $(Y,\Sigma)$.
It is just a formality to verify that the resulting adjunctions
$$
1\to\iota_!\iota^*\ \ \text{ and }\ \ \iota^*\iota_!\to 1
$$
are as required.
\end{proof}

\begin{remark}
As hinted in \cite[\ref{fibrediff}]{ive-tch}, the functor $(X,\Sigma)\mapsto\Sigma$
makes the category of difference schemes into a (split) fibered category
over $\diff$. 

In Grothendieck's terminology from \cite{sga1}, the existence of left adjoints for the pullback functors
makes the category of almost strict difference schemes into a (split)
\emph{bi-fibered} category over the category of almost strict difference structures.
The author is uncertain on the nomenclature of (split) fibrations in which
the pullback functors come with right adjoints as well.
\end{remark}

\section{Effective difference algebraic geometry}\label{s:effective}

As mentioned in the Introduction, one of the main benefits of our Galois
stratification procedure is that it makes the \emph{quantifier elimination} and
\emph{decision} procedures for fields with Frobenii \emph{effective} in an adequate sense
of the word to be expounded in this section.

Ideally, we would like to prove that it makes those procedures
\emph{primitive recursive}, which would represent a significant improvement
on the known results \cite{angus}, \cite{zoe-udi}, \cite{udi}, where it was
shown that the decision procedure is recursive.

Unfortunately, due to the underdeveloped state of constructive difference
commutative algebra, and the lack of algorithms for relevant operations
with difference polynomial ideals, all we can do at the moment is to
show that our Galois stratification procedure is primitive recursive reducible
to a number of basic operations in difference algebra, which we strongly 
believe to be primitive recursive themselves.
 
A ring $(R,\sigma)$ is said to be 
 \emph{effectively presented,}
if it has a finite $\sigma$-presentation over $\Z$, with its generators and relations
explicitly given.
The following is a list of elementary operations on effectively presented rings
that we shall have recourse to
in the sequel. 
\begin{enumerate}[label=$(\dag_{\arabic*})$]
\item\label{E1} Given a difference ideal $I$ in a difference polynomial ring over an
effectively presented difference field, find its minimal
associated $\sigma$-primes, i.e., find an irredundant decomposition 
$\{I\}_\sigma=\p_1\cap\cdots\cap\p_n$  (as in \cite[\ref{strictcomp}]{ive-tch}).
\item\label{E2} Given an extension $(K,\sigma)\to (L,\sigma)$ of effectively presented
 difference fields of finite $\sigma$-type, compute the relative algebraic closure of $K$ in $L$.
\item\label{E3}Given an $\sigma$-separable Galois extension $(K,\sigma)\to (L,\sigma)$ of effectively presented difference
fields of finite $\sigma$-type, compute its Babbitt's decomposition (as in \ref{babb-fields}). 
\item\label{E4} For an effectively presented integrally closed domain $(R,\sigma)$ with fraction field 
$(K,\sigma)$, and an extension $(L,\sigma)$ of $(K,\sigma)$ of finite $\sigma$-type,
find the integral closure $(S,\sigma)$ of $R$ in $L$, and compute the 
$\sigma$-localisation
$(R',\sigma)$ of $R$ so that the corresponding $S'$ is of finite $\sigma$-type over $R'$ (cf.~\ref{cor:norm-lattice}).
\item\label{E5}Given an effectively presented morphism $f:(R,\sigma)\to(S,\sigma)$ 
of effectively presented difference
rings and a suitable property $P$ of scheme morphisms, if
$f$ is generically $\sigma$-$P$,
compute  the $\sigma$-localisations $R'$ of $R$ and $S'$ of $S$ such that
$(R',\sigma)\to(S',\sigma)$ is $\sigma$-$P$ (in particular, we need effective versions
of 
\ref{loc-si-smooth}, \ref{loc-si-integralfibres} and \ref{locnorm}).
\item\label{E6} Given an algebraic extension $(K,\sigma)\to (L,\sigma)$ of 
effectively presented difference
fields of finite $\sigma$-type, compute the quasi-Galois closure of $L$ over $K$.
\item\label{E7} For a finite Galois extension $(K,\sigma)\to (L,\Sigma)$ of 
effectively presented difference
fields, establish an effective correspondence between the
intermediate field extensions and subgroups of the Galois group.
\item\label{E8} Effective Twisted Lang-Weil estimate. In the situation of
\cite[\ref{udiLW}]{ive-tch}, compute explicitly the constant $C$ and the localisation
$S'$ of $S$.
\end{enumerate}
 
 \begin{definition}
 We define \emph{$\dag$-primitive recursive functions} as functions primitive recursive reducible to basic operations in Difference Algebraic Geometry
 as detailed by the following axioms. 

\noindent Basic $\dag$-primitive recursive functions are:
\begin{enumerate}
\item \emph{Constant} functions, \emph{Successor} function $S$, coordinate \emph{Projections};
\item Elementary operations in \emph{difference algebraic geometry}
\ref{E1}--\ref{E8}.
\end{enumerate}
More complex $\dag$-primitive recursive functions are built using:
\begin{enumerate}[resume]

\item \emph{Composition.} If $f$ is an $n$-ary $\dag$-primitive recursive function, and 
$g_1,\ldots,g_n$ are $m$-ary $\dag$-primitive recursive function, then
$$
h(x_1,\ldots,x_m)=f(g_1(x_1,\ldots,x_m),\ldots,g_n(x_1,\ldots,x_m))
$$
is $\dag$-primitive recursive.
\item \emph{Primitive recursion.} Suppose $f$ is an $n$-ary and $g$ is an $(n+2)$-ary 
$\dag$-primitive recursive function. The function $h$, defined by
\begin{align*}
&h(0,x_1,\ldots,x_n) = f(x_1,\ldots,x_n)\\
&h(S(y),x_1,\ldots,x_n)=g(y,h(y,x_1,\ldots,x_n),x_1,\ldots,x_n)
\end{align*}
is $\dag$-primitive recursive.
\end{enumerate}
\end{definition}

\begin{remark}
The operations \ref{E5}--\ref{E8}  
are primitive recursive.
\end{remark} 
\begin{proof}
In view of the constructive nature of proofs of \ref{preprl}, \ref{sigma-generic},
\ref{loc-si-smooth}, \ref{loc-si-integralfibres}, \ref{locnorm}, the fact that
the operation \ref{E5} %
is primitive
recursive will follow from the existence of the 
classical primitive recursive procedure for
finding a localisation satisfying the property $P$ at the start of the prolongation
sequence. 

The operation \ref{E6} is  primitive recursive
because the construction of quasi-Galois closure is
primitive recursive in the algebraic case.  Indeed, if $L=K(a_1,\ldots,a_n)_\sigma$,
and $K(b_1,\ldots,b_m)$ is the quasi-Galois closure of $K(a_1,\ldots,a_n)$,
then $K(b_1,\ldots,b_m)_\sigma$ is the quasi-Galois closure of $L$.

Since the operation \ref{E7} only deals with finite Galois extensions, it follows
that it is primitive recursive by the discussion in \cite{fried-sacer}.

Regarding \ref{E8}, Hrushovski indicates in the `Decidability' subsection following \cite[13.2]{udi} that it is effective, showing how to explicitly compute the constants
for the error term.
 
\end{proof}

\begin{conjecture}
All the operations \ref{E1}--\ref{E4} are primitive recursive. The notions
of \emph{primitive recursive} and \emph{$\dag$-primitive recursive} coincide.
\end{conjecture}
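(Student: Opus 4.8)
The plan is to reduce the second assertion to the first. By the preceding remark the operations \ref{E5}--\ref{E8} are primitive recursive, so if we also knew that \ref{E1}--\ref{E4} were primitive recursive, then every \emph{basic} $\dag$-primitive recursive function would be primitive recursive; since composition and primitive recursion preserve primitive recursiveness, all $\dag$-primitive recursive functions would be primitive recursive. The reverse inclusion is immediate from the definition, the constant, successor and projection functions together with composition and primitive recursion being already available. Thus the equivalence of the two notions follows formally once the first assertion is proved, and the whole statement reduces to showing that \ref{E1}--\ref{E4} are primitive recursive.

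For each of \ref{E1}--\ref{E4} the strategy is the same: translate the difference-algebraic operation into a sequence of \emph{ordinary} algebraic-geometric operations performed along the prolongation tower $X_n=\spec(A_n)$ of \ref{preprl}, each individual operation being one of the classically primitive-recursive constructions (Gr\"obner-basis computation, effective Nullstellensatz, primary decomposition over a computable field, and integral closure of a finitely generated domain in a finite extension). Concretely, I would compute \ref{E2} and the core appearing in \ref{E3} by taking relative algebraic closures level by level and reading off Babbitt's decomposition \ref{babb-fields} from the stabilised finite-level data; \ref{E4} by computing the integral closures of the $A_n$ and invoking the generic-freeness and $\sigma$-localisation mechanism of \ref{preprl} and \ref{sigma-generic} to make the resulting tower of finite $\sigma$-type; and \ref{E1} by combining a Ritt--Raudenbush characteristic-set reduction for the perfect difference ideal $\{I\}_\sigma$ with the primitive-recursive primary decomposition of each $A_n$.

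The hard part, and the reason this remains a conjecture, is that difference polynomial rings are not Noetherian, so there is no \emph{a priori} bound on how far up the tower one must compute before the relevant invariant has stabilised. Each of the reductions above does \emph{terminate}: the Ritt--Raudenbush basis theorem forces the ascending chain of perfect difference ideals to stabilise, and the limit-degree argument underlying \ref{preprl} (following Levin) forces the connecting morphisms $X_{n+1}\to X_n\times_{X_{n-1}}X_n$ to become isomorphisms from some level on. But the standard proofs of both facts proceed by well-ordering or Noetherian-induction arguments that supply no explicit bound. The crux is therefore to produce a genuinely \emph{primitive-recursive} stabilisation function $N$ of the input presentation such that all of \ref{E1}--\ref{E4} are determined by the first $N$ levels of the prolongation tower.

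I would attack this last point by seeking effective bounds on the order and degree of the difference polynomials produced at each stage in terms of the limit degree and reduced limit degree of the extension, which are precisely the invariants governing when \ref{preprl} stabilises; controlling these primitive-recursively in the input data would convert each termination-by-well-ordering argument into an explicit count of ordinary operations, whereupon the level-by-level translations above would finish the proof. This is exactly the step for which no primitive-recursive estimate is presently known, and which is flagged as the remaining obstruction in the Introduction.
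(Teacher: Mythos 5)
The statement you were asked to prove is labelled a \emph{conjecture} in the paper: the author gives no proof, and explicitly states that it is merely ``plausible'' that the operations \ref{E1}--\ref{E4} will be shown to be primitive recursive in the future. So there is no proof in the paper to compare yours against, and your proposal --- quite properly --- does not claim to close the question either. The one piece of your argument that is genuinely verifiable is the reduction of the second sentence to the first: since the remark preceding the conjecture establishes that \ref{E5}--\ref{E8} are primitive recursive, adding \ref{E1}--\ref{E4} would make every basic $\dag$-primitive recursive function primitive recursive, and closure under composition and primitive recursion then gives one inclusion, the other being immediate from the definition. That reduction is correct and is implicit in the paper's setup.

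For the substantive part, your diagnosis of the obstruction is accurate and matches what the paper itself flags: the prolongation-tower translation into classical effective algebraic geometry works level by level, but difference polynomial rings are not Noetherian, and the termination arguments (Ritt--Raudenbush for \ref{E1}, stabilisation of limit degree in the sense of Levin for \ref{preprl} and hence for \ref{E2}--\ref{E4}) are proved by well-ordering with no explicit bound. Until someone produces a primitive-recursive stabilisation bound $N$ in terms of the input presentation, the level-by-level strategy cannot be completed, and that is precisely why the statement remains a conjecture rather than a theorem. In short: your proposal is an honest and well-targeted research sketch, not a proof, and no proof currently exists --- which is consistent with the paper.
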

It is plausible to the author that the relevant operations in difference
algebra will be shown to be primitive recursive in the near future. Consequently, 
our Galois stratification, as well as the decision procedure for fields with Frobenii will be shown to be primitive recursive.
 
\section{Galois stratification}
\subsection{Galois stratifications and Galois formulae}

\begin{definition}
Let $(X,\sigma)$ be an $(S,\sigma)$-difference scheme. It is often useful to
consider its \emph{realisation functor} $\tilde{X}$. For each $s\in S$ and
each algebraically closed difference field $(F,\varphi)$ extending $(\kk(s),\sigma^s)$,
$$
\tilde{X}(s,(F,\varphi))=X_s(F,\varphi).
$$
An \emph{$(S,\sigma)$-subassignment} of $X$ is any subfunctor $\cF$ of $\tilde{X}$.
Namely, for any $(s,(F,\varphi))$ as above,
$$
\cF(s,(F,\varphi))\subseteq X_s(F,\varphi),
$$
and for any $u:(s,(F,\varphi))\to(s',(F',\varphi'))$, 
$\cF(u)$ is the restriction of $\tilde{X}(u)$ to $\cF(s,(F,\varphi))$.
\end{definition}

\begin{definition}Let $(S,\sigma)$ be a difference scheme %
and let 
$(X,\sigma)$ be a difference scheme over $(S,\sigma)$.
A \emph{normal (twisted) Galois stratification} 
$$ 
\cA=\langle X, Z_i/X_i, C_i\, |\, i\in I\rangle
$$
of $(X,\sigma)$ over $(S,\sigma)$ is a partition of $(X,\sigma)$
into a finite set of of integral normal  $\sigma$-locally closed difference $(S,\sigma)$-subvarieties 
$(X_i,\sigma)$ of $(X,\sigma)$,
each equipped with a connected Galois covering $(Z_i,\Sigma_i)/(X_i,\sigma)$ with group 
$(G_i,\tilde{\Sigma}_i)$, and   
$C_i$ is a `conjugacy domain' in $\Sigma_i$, as in 
\cite[Section~\ref{sect:galois}]{ive-tch}.
\end{definition}
A normal Galois stratification is \emph{effectively given}, if the base $(S,\sigma)$ and
all the pieces $Z_i$, $X_i$ are affine with  effectively presented coordinate rings
(i.e., of finite $\sigma$-presentation over $\Z$).

\begin{definition}
We define the \emph{(twisted) Galois formula} over $(S,\sigma)$ associated with the above stratification 
$\cA$ to be its realisation subassignment $\tilde{\cA}$ of $X$. 
Given a point $s\in S$ and
an algebraically closed
 difference field $(F,\varphi)$ extending $(\kk(s),\sigma^s)$, 
$$
\tilde{\cA}(s,(F,\varphi))=\cA_s(F,\varphi)=\bigcup_i\{x\in X_{i,s}(F,\varphi)\,\, |\,\, \varphi_{x}^{Z_i/X_i}\subseteq C_i\},
$$
where $\varphi_{x}^{Z_i/X_i}$ denotes the local $\varphi$-substitution at $x$,
as defined in \cite[Subsection~\ref{ss:decompinert}]{ive-tch}.

It can be beneficial to think of the Galois formula associated with $\cA$ and a given \emph{parameter} $s\in S$ as of the formal expression
$$
\theta(t;s)\equiv\{t\in X_s\ |\ \ar(t)\subseteq\con(\cA)\},
$$
whose \emph{interpretation} in any given $(F,\varphi)$ extending $(\kk(s),\sigma^s)$
is given above. Namely, the `Artin symbol' $\ar(t)$ of a point $x\in X_{i,s}(F,\varphi)$
is interpreted as $\ar(x)=\varphi_x$, the local $\varphi$-substitution at $x$ with respect to the covering $Z_i/X_i$, and $\con(\cA)$ at $x$ becomes the appropriate $C_i$. 
\end{definition}

\begin{remark}
If we fix a lift $\sigma_i\in\Sigma_i$ of $\sigma$ for each $i$, the
above data is equivalent to fixing for each $i$ 
 a $()^{\sigma_i}$-conjugacy domain $\dot{C}_i$ in $G_i$, i.e., a union of $()^{\sigma_i}$-conjugacy classes in  $G_i$. This justifies the adjective `twisted' used alongside `stratification'. Clearly,
 $$
 \cA_s(F,\varphi)=\bigcup_i\{x\in X_{i,s}(F,\varphi)\,\, |\,\, \dot{\varphi}_{x}^{Z_i/X_i}\in \dot{C}_i\}.
$$ 
\end{remark}

\begin{remark}
In view of our previous consideration of constructible functions on $(X,\sigma)$, it 
 is clear that a constructible function has only finitely many values
 and that Galois formulae arise as `level-sets' of constructible functions.
 In fact, by identifying a conjugacy domain $C$ with its characteristic function $1_C$,
 we can think of a Galois formula associated with 
 $$ 
\cA=\langle X, Z_i/X_i, C_i\, |\, i\in I\rangle
$$
as the preimage $1_{\cA}^{-1}(1)$ of the constructible function
$$ 
1_{\cA}=\langle X, Z_i/X_i, 1_{C_i}\, |\, i\in I\rangle.
$$
Alternatively, $1_{\cA}$ can be thought of as a characteristic function of $\cA$ on $X$.
With this duality in mind, starting from the \emph{Boolean ring} of characteristic
functions of Galois formulae on $X$ (which is the subring of idempotents in the algebra $\cC(X)$ of
all constructible functions, see \cite[\ref{constr-algebra})]{ive-tch}, we can define a \emph{Boolean algebra}
structure on the class (really a set) of Galois formulae on a given difference
scheme $(X,\sigma)$ over $(S,\sigma)$:
\begin{enumerate}
\item $0_X=\langle X,X/X,0\rangle$, $1_X=\langle X,X/X,1\rangle$;
\item $1_{\cA\land\cB}=1_{\cA}\cdot 1_{\cB}$;
\item $1_{\cA\lor\cB}=1_{\cA}+1_{\cB}-1_{\cA}\cdot1_{\cB}$;
\item $1_{\lnot\cA}=1_X-1_{\cA}$.
\end{enumerate}
\end{remark} 

Although efficient, the above definition of the Boolean algebra structure on
Galois formulae on a given difference scheme can be made more explicit and
informative as follows.

\begin{definition}
Let $f:(X,\sigma)\to(Y,\sigma)$ be an $(S,\varphi)$-morphism, let
$\cA=\langle X,Z_i/X_i,C_i\rangle$ be an $(S,\sigma)$-Galois stratification
on $X$ and let
$\cB=\langle Y,W_j/Y_j,D_j\rangle$ be an $(S,\sigma)$-Galois stratification
on $Y$. 
\begin{enumerate}
\item With notation of \cite[\ref{def:inflat}]{ive-tch}, 
the \emph{inflation} of $\cA$ is defined
as 
$$\cA'=\langle X,Z'_i/X_i,\pi_i^{-1}(C_i)\rangle,$$
and has the property that for every $s\in S$, and every algebraically closed
$(F,\varphi)$ extending $(\kk(s),\sigma^s)$,
$$
\cA'_s(F,\varphi)=\cA_s(F,\varphi).
$$
\item With notation of \cite[\ref{def:refin}]{ive-tch}, the \emph{refinement} of $\cA$ is defined
as
$$\cA'=\langle X,Z_{ij}/X_{ij},\iota_{ij}^{-1}(C_i)\rangle,$$
and has the property that for every $s\in S$, and every algebraically closed
$(F,\varphi)$ extending $(\kk(s),\sigma^s)$,
$$
\cA'_s(F,\varphi)=\cA_s(F,\varphi).
$$
\item With notation of \cite[\ref{def:pullbk}]{ive-tch}, the \emph{pullback} $f^*\cB$ of $\cB$ with respect to
$f$ is defined
as a refinement of
$$\langle X,Z_{j}/X_{j},\iota_{j}^{-1}(D_j)\rangle$$
to a normal refinement of the stratification $X_j$ of $X$. 
It has the property that for every $s\in S$, and every algebraically closed
$(F,\varphi)$ extending $(\kk(s),\sigma^s)$,
$$
f^*\cB_s(F,\varphi)=f_s^{-1}(\cB_s(F,\varphi)).
$$
\end{enumerate}
\end{definition}

\begin{definition} 
Let $(X,\sigma)$ be an $(S,\sigma)$-difference scheme.
The class of  $(S,\sigma)$-Galois formulae on $X$ has a \emph{Boolean algebra} 
structure as follows. 
\begin{enumerate}
\item $\bot_X=\langle X,X/X,\emptyset\rangle$, $\top_X=\langle X,X/X,\{\sigma\}\/\rangle$.
\end{enumerate}
For Galois formulae on $X$ given by $\cA$ and $\cB$, upon a refinement
and an inflation we may assume that $\cA=\langle X,Z_i/X_i,C_i\rangle$ and
$\cB=\langle X,Z_i/X_i,D_i\rangle$, with $C_i,D_i\subseteq\Sigma_i$.
\begin{enumerate}[resume]
\item $\cA\land\cB=\langle X,Z_i/X_i,C_i\cap D_i\rangle$.
\item $\cA\lor\cB=\langle X,Z_i/X_i,C_i\cup D_i\rangle$.
\item $\lnot\cA=\langle X,Z_i/X_i,\Sigma_i\setminus C_i\rangle$.
\end{enumerate}
\end{definition}

\subsection{Direct Image Theorems}

The following result can be considered as a difference version of Chevalley's
theorem stating that a direct image of a constructible set by a scheme morphism
of finite presentation is again constructible.

\begin{proposition}\label{chevalley}
Let $f:(X,\Sigma)\to (Y,T)$ be a generic difference covering of finite transformal type.
Then $f(X)$ contains a dense open subset of $Y$. If $f$ is effectively presented,
we can compute it in a $\dag$-primitive recursive way.
\end{proposition}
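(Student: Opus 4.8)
The plan is to factor $f$ into a ``residual regular part'' and an ``algebraic Galois part'' of the underlying function field extension, to prove density of the image for each separately, and then to compose. Reducing to the affine situation $(X,\Sigma)=\spec^{\Sigma}(B)$ and $(Y,T)=\spec^{T}(A)$, set $K=\kk(Y)$, $F=\kk(X)$, and let $L$ be the relative algebraic closure of $K$ in $F$. Since $f$ is a generic difference covering, $L/K$ is Galois and the restriction $\Sigma'$ of $\Sigma$ to $L$ is a complete set of representatives of the isomorphism classes of lifts of $T$. Let $(Y',\Sigma')$ be the normalisation of $(Y,T)$ in $(L,\Sigma')$; after a transformal localisation (operation \ref{E4}) this is of finite transformal type over $Y$, and $f$ factors as
$$(X,\Sigma)\xrightarrow{\ g\ }(Y',\Sigma')\xrightarrow{\ h\ }(Y,T),$$
where $h$ is generically finite and $F=\kk(X)$ is regular over $L=\kk(Y')$, as $L$ is relatively algebraically closed in $F$.

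For the residual part $g$, regularity of $F$ over $L$ means that the generic fibre of $g$ is geometrically integral. By \ref{loc-si-integralfibres}, after a transformal localisation of $Y'$ every fibre of $g$ is geometrically integral, hence in particular non-empty; thus $g$ surjects onto a dense open of $Y'$.

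For the algebraic part $h$, the morphism $(Y',\Sigma')\to(Y,T)$ is a generically \'etale quasi-Galois $\sigma$-separable morphism of normal affine difference schemes, so Babbitt's decomposition \ref{babb-sch} applies: after a further transformal localisation of $Y$ we obtain a tower
$$(Y',\Sigma')\simeq(Y'_n,\Sigma'_n)\to\cdots\to(Y'_0,\Sigma'_0)\to(Y,T),$$
in which $(Y'_0,\Sigma'_0)\to(Y,T)$ is finite \'etale quasi-Galois and each $Y'_{i+1}\to Y'_i$ is \'etale benign Galois. The benign lifting lemma \ref{benignlift} lifts fixed points through each benign layer, while the hypothesis that $\Sigma'$ exhausts the lifts of $T$, combined with finite \'etaleness, lifts fixed points through the base layer; hence $h$ surjects onto a dense open of $Y$. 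Composing with the previous paragraph, $f(X)$ contains a dense open subset of $Y$. I expect this algebraic part to be the \emph{main obstacle}: classically a finite \'etale morphism is automatically surjective, but in the difference setting a $\tau$-fixed point of $Y$ need not admit a genuinely $\sigma$-fixed lift unless the twisting is controlled, and it is exactly Babbitt's decomposition together with the difference-covering hypothesis on $\Sigma'$ that supplies such lifts.

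Finally, for effectivity we observe that each step is realised by the operations of Section~\ref{s:effective}: the relative algebraic closure $L$ by \ref{E2}, the normalisation $Y'$ and the localisation making it of finite transformal type by \ref{E4}, the localisation producing geometrically integral fibres by \ref{E5} (which subsumes an effective \ref{loc-si-integralfibres}), Babbitt's decomposition by \ref{E3}, and the correspondence between intermediate covers and subgroups by \ref{E7}. Taking a common transformal localisation of $Y$ over which all of the above hold yields the desired dense open, computed in a $\dag$-primitive recursive manner.
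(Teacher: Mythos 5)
Your proposal is correct and follows essentially the same route as the paper: the baby Stein factorisation through the normalisation of $Y$ in the relative algebraic closure of $\kk(Y)$ in $\kk(X)$, surjectivity of the geometrically-integral-fibres part via \ref{loc-si-integralfibres}, Babbitt's decomposition \ref{babb-sch} for the algebraic part with benign lifting and the difference-covering hypothesis handling the finite Galois base layer, and the same $\dag$-operations for effectivity. The only cosmetic difference is that the paper first localises to make $f$ $\sigma$-smooth between normal schemes via \ref{loc-si-smooth} before factoring, a step your argument absorbs into the later localisations.
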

\begin{proof}
It is enough to consider the case when $T=\{\sigma\}$.
By $\sigma$-localising (\ref{loc-si-smooth}), 
we may assume that $f$ is a $\sigma$-smooth morphism
between normal difference schemes. This is $\dag$-primitive recursive by \ref{E8}.
By considering the normalisation $\tilde{Y}$ in the relative algebraic closure of $\kk(Y)$ inside
$\kk(X)$, we obtain a baby Stein factorisation $(X,\Sigma)\to (\tilde{Y},\Sigma)\to 
(Y,\sigma)$, 
where the first map has generically geometrically integral fibres, and the second is 
generically $\sigma$-\'etale with $\kk(\tilde{Y})/{\kk(Y)}$ Galois. By localising futher
using \ref{loc-si-integralfibres}, we may assume
that all the fibres of the first morphism are non-empty so the first morphism
is surjective, and we can restrict our attention to the second morphism. These steps
are $\dag$-primitive recursive by \ref{E2}, \ref{E4}, \ref{E5}.

Using \ref{loc-si-smooth}, by another localisation we restrict to the case 
where $(\tilde{Y},\Sigma)\to(Y,\sigma)$ is $\sigma$-\'etale.
Applying Babbitt's decomposition \ref{babb-sch} to $(\tilde{Y},\Sigma)\to(Y,\sigma)$ and a further localisation if necessary, 
we obtain a tower 
$$
\tilde{Y}\simeq Y_n\to\cdots\to Y_1\to Y_0\to Y,
$$
with $(Y_0,\Sigma)\to (Y,\sigma)$ finite Galois and 
all $Y_{i+1}\to Y_i$ benign, for $i\geq 0$. In the effective case, all this can be
achieved in a $\dag$-primitive recursive way, using \ref{E5}, \ref{E3}, \ref{E4}.
The first morphism is a Galois covering and therefore surjective by 
\cite[\ref{quotaff}]{ive-tch}
and its generalisations,
and benign morphisms are clearly surjective. Thus, we conclude that $f$
can be made surjective upon a finite $\sigma$-localisation, which is
enough to deduce the required statement.
\end{proof}

\begin{definition}
Let $(S,\sigma)$ be a normal integral difference scheme of finite $\sigma$-type
over $\Z$, and let $(X,\sigma)$ be an $(S,\sigma)$-difference scheme.
Let $\cF$ and $\cF'$ be $(S,\sigma)$-subassignments of $X$.
We shall say that $\cF$ and $\cF'$ are \emph{equivalent with respect to fields with
Frobenii} over $S$ and write
$$
\cF\equiv_S\cF',
$$
if for every closed $s\in S$, every sufficiently large
finite field $k$ with $(\bar{k},\varphi_k)$ extending $(\kk(s),\sigma^s)$, 
$$
\cF(s,(\bar{k},\varphi_k))=\cF'(s,(\bar{k},\varphi_k)).
$$
\end{definition}

\begin{definition}
Let $f:(X,\sigma)\to (Y,\sigma)$ be a morphism of $(S,\sigma)$-difference
schemes and let $\cA$ be Galois stratification on $X$, associated with a Galois
formula $\chi(x;s)\equiv\{x\in X_s\ |\ \ar(x)\subseteq\con(\cA)\}$. For $s\in S$ and
$(F,\varphi)$ an algebraically closed difference field extending $(\kk(s),\sigma^s)$,
we define a subassignment $f_{\exists}\cA$ of $Y$ by the rule
$$
f_{\exists}\cA(s,(F,\varphi))=(f_{\exists}\cA)_s(F,\varphi)=f_s(\cA_s(F,\varphi))\subseteq Y_s(F,\varphi).
$$
It can also be considered
as an expression 
$$\upsilon(y;s)\equiv\{y\in Y_s\ |\ \exists x\ \chi(x;s), f_s(x)=y\}$$
which justifies the notation somewhat.
\end{definition}

\begin{lemma}\label{galtwr1}
Suppose $(Z,\Sigma_Z)\to(X,\Sigma_X)\to(Y,\sigma)$ is a tower of \'etale 
Galois coverings, and let $C\subseteq\Sigma_Z$ be a conjugacy domain.
We have an exact sequence of groups with operators
$$
1\to\Gal(Z/X)\to\Gal(Z/Y)\to\Gal(X/Y)\to 1.
$$
Then
$$
f_{\exists}\langle Z/X,C\rangle=\langle Z/Y,C\rangle^{\widetilde{}}.
$$
\end{lemma}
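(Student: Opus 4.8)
The plan is to verify the stated identity of subassignments of $Y$ pointwise. I would fix an algebraically closed difference field $(F,\varphi)$ extending the base and a point $y\in Y(F,\varphi)$; by the definition of $f_{\exists}$ and of the realisation of a Galois formula, it suffices to show that there exists $x\in X(F,\varphi)$ with $f(x)=y$ and $\varphi_x^{Z/X}\subseteq C$ if and only if $\varphi_y^{Z/Y}\subseteq C$. Throughout I write $G=\Gal(Z/Y)$, $H=\Gal(Z/X)$ and $\bar G=\Gal(X/Y)=G/H$, and I fix a lift $\sigma_Z\in\Sigma_Z$ of the base $\sigma$, so that $\Sigma_Z=G\sigma_Z$ and conjugation by $G$ (resp.\ $H$) on $\Sigma_Z$ is the $()^{\sigma_Z}$-twisted conjugacy on $G$ (resp.\ $H$) of the kind appearing in the benign case. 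Since $C\subseteq\Sigma_Z$ is a conjugacy domain for the covering $Z/Y$, it is stable under $G$-conjugation, hence a fortiori under $H$-conjugation. Because $Z$ is connected and the coverings are \'etale, the decomposition--inertia theory of \cite[Subsection~\ref{ss:decompinert}]{ive-tch} computes $\varphi_y^{Z/Y}$ (resp.\ $\varphi_x^{Z/X}$) as a single $G$-conjugacy (resp.\ $H$-conjugacy) class in $\Sigma_Z$, read off from the Frobenius action on the fibre: $\varphi_y^{Z/Y}$ is exactly the set of $\psi\in\Sigma_Z$ admitting a $\psi$-compatible lift $\bar z\in Z^\psi(F,\varphi)$ lying over $y$, and similarly for $x$.

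For the inclusion $f_{\exists}\langle Z/X,C\rangle\subseteq\langle Z/Y,C\rangle^{\widetilde{}}$, suppose $y=f(x)$ with $x\in X(F,\varphi)$ and $\varphi_x^{Z/X}\subseteq C$. I would choose a lift $\bar z\in Z^\psi(F,\varphi)$ of $x$, so that $\psi\in\varphi_x^{Z/X}$ and $\varphi_x^{Z/X}=[\psi]_H$. Composing $Z\to X\to Y$, the same $\bar z$ is a lift of $y$, whence $\psi\in\varphi_y^{Z/Y}$ and therefore $\varphi_y^{Z/Y}=[\psi]_G\supseteq[\psi]_H$, the nesting being precisely the statement that an $H$-twisted-conjugacy class sits inside the $G$-twisted-conjugacy class, compatibly with the operator-respecting exact sequence $1\to H\to G\to\bar G\to 1$. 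As $\psi\in[\psi]_H\subseteq C$ and $C$ is $G$-stable, the whole class $[\psi]_G=\varphi_y^{Z/Y}$ lies in $C$, i.e.\ $y\in\langle Z/Y,C\rangle^{\widetilde{}}(F,\varphi)$.

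Conversely, assuming $\varphi_y^{Z/Y}\subseteq C$, this class is non-empty, so I would pick $\psi\in\varphi_y^{Z/Y}$ together with a $\psi$-compatible lift $\bar z\in Z^\psi(F,\varphi)$ of $y$; such a lift exists by the very description of $\varphi_y^{Z/Y}$ recalled above, and may also be produced from the \'etale lifting statement \ref{benignlift}. Let $x$ be the image of $\bar z$ in $X$. Since $Z\to X$ is a morphism of difference schemes, $x\in X^{\psi|_X}(F,\varphi)\subseteq X(F,\varphi)$ and $f(x)=y$. The point $\bar z$ is a lift of $x$ as well, so $\varphi_x^{Z/X}=[\psi]_H\subseteq[\psi]_G=\varphi_y^{Z/Y}\subseteq C$. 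Hence $x$ belongs to the realisation of $\langle Z/X,C\rangle$ and $y=f(x)\in f_{\exists}\langle Z/X,C\rangle(F,\varphi)$, giving the reverse inclusion.

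The main obstacle is contained entirely in the first paragraph: one must check, inside the decomposition--inertia formalism of \cite{ive-tch}, that a single lift $\bar z$ simultaneously computes the local substitution at $x$ relative to $Z/X$ and at $y$ relative to $Z/Y$, and that the resulting classes are nested $[\psi]_H\subseteq[\psi]_G$ in a manner compatible with the given exact sequence of groups with operators. The \'etale hypothesis is essential here: it forces the inertia groups to be trivial, so that the substitutions are genuine (twisted) conjugacy classes rather than larger conjugacy domains, and the comparison along the tower reduces to the transparent group-theoretic nesting used above. Once this functoriality is established, both inclusions are formal, the only arithmetic input being the $G$-conjugation stability of $C$.
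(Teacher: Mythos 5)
Your proof is correct and is essentially the fully written-out version of what the paper dispatches with the single remark that ``the proof is quite obvious'' apart from one subtlety: the common lift $\bar z\in Z^{\psi}$ computing both $\varphi_x^{Z/X}=[\psi]_H$ and $\varphi_y^{Z/Y}=[\psi]_G$, the nesting of twisted conjugacy classes, and the stability of $C$ are exactly the intended ingredients. The one point where you diverge from the paper's emphasis is the subtlety it does flag: the paper wants to \emph{deduce}, via the exact sequence of groups with operators, that $C$ (given as a conjugacy domain making $\langle Z/X,C\rangle$ a Galois formula) is also a conjugacy domain for $Z/Y$, whereas you assume $G$-stability outright and observe that $H$-stability follows a fortiori; this is harmless in the context where the lemma is applied (there $C=\iota_*C_0$ is $G$-saturated by construction), but you should be aware that under the weaker reading of the hypothesis the $G$-stability of $C$ is the only non-formal step and is precisely what the displayed exact sequence is there to supply.
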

Apart from the subtlety that the short exact sequence is needed to deduce that $C$
remains a conjugacy domain with respect to the covering $Z/Y$, the proof
is quite obvious.

\begin{lemma}\label{galtwr2}
Let $(Z,\Sigma_Z)\to(X,\Sigma_X)\to(Y,\sigma)$ be a tower of \'etale Galois
coverings, and assume that $\iota:\Sigma_0\hookrightarrow\Sigma$ is a $\diff$-morphism
satisfying the conditions from \ref{indinj}. Let $(X_0,\Sigma_{X_0})=\iota^*(X,\Sigma)$,
suppose that $(Z_0,\Sigma_{Z_0})$ makes  the square in the diagram 
$$
 \begin{tikzpicture} 
 [cross line/.style={preaction={draw=white, -,
line width=3pt}}]
\matrix(m)[matrix of math nodes, %
row sep=1.5em, column sep=2.2em, text height=1.5ex, text depth=0.25ex]
 { 
          |(z0)|{Z_0} &         |(z)|{Z} \\[1em]
          |(x0)|{X_0} &	       |(x)| {X}  \\[1em]
             |(y)|{Y}      &    		     \\};
\path[->,font=\scriptsize,>=to, thin]
(z0) edge (z) edge (x0) 
(x0) edge node[above]{$(i,\iota)$} (x) edge node[left]{$f_0$} (y)
(z) edge (x)
(x) edge node [right,pos=0.6]{$f$} (y)
;
\end{tikzpicture} 
$$
Cartesian, and let $C_0$ be a conjugacy domain in $\Sigma_{Z_0}$.
Then 
$$
f_{0{\exists}}\langle Z_0/X_0,C_0\rangle=
f_{\exists}i_{\exists}\langle Z_0/X_0,C_0\rangle=f_{\exists}\langle Z/X,\iota_*C_0\rangle,
$$
where $\iota_*C_0$ is the smallest conjugacy domain in $\Sigma_Z$
containing $\iota(C_0)$.
\end{lemma}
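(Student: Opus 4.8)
The plan is to treat the two equalities separately and to reduce everything to a pointwise comparison of local $\varphi$-substitutions. The second equality $f_\exists i_\exists\langle Z_0/X_0,C_0\rangle=f_{0\exists}\langle Z_0/X_0,C_0\rangle$ is pure functoriality: since $f_0=f\circ i$, for every $s\in S$ and every algebraically closed $(F,\varphi)$ extending $(\kk(s),\sigma^s)$ we have $f_{0,s}=f_s\circ i_s$ on realisations, and direct images of subsets compose, so $f_{0,s}(\cF_s(F,\varphi))=f_s(i_s(\cF_s(F,\varphi)))$ for any subassignment $\cF$ of $X_0$. It therefore suffices to prove the identity of Galois formulae on $X$
$$
i_\exists\langle Z_0/X_0,C_0\rangle=\langle Z/X,\iota_*C_0\rangle,
$$
and then apply $f_\exists$ to both sides.

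To prove this, fix $s$ and an algebraically closed $(F,\varphi)$ extending $(\kk(s),\sigma^s)$, and fix $x\in X_s(F,\varphi)$. Because the square is Cartesian with $X_0=\iota^*X$, lifting $x$ to $Z$ is the same datum as lifting a preimage $x_0\in i^{-1}(x)$ to $Z_0$, now read through the induced $\diff$-morphism $\iota\colon\Sigma_{Z_0}\to\Sigma_Z$ covering $\iota\colon\Sigma_0\to\Sigma$. First I would pin down the fibre $i^{-1}(x)$: the generalised-conjugation hypothesis on $\iota$ from \ref{indinj} guarantees that $i_s$ is surjective on realisations and, more precisely, that the classes $\varphi_{x_0}^{Z_0/X_0}$, as $x_0$ ranges over $i^{-1}(x)$, are exactly the $\Sigma_{Z_0}$-conjugacy classes carried by $\iota$ into the single $\Sigma_Z$-conjugacy class $\varphi_x^{Z/X}$ --- a lifting statement in the spirit of \ref{benignlift}.

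Granting this, the heart of the matter is the membership equivalence
$$
\varphi_x^{Z/X}\subseteq\iota_*C_0
\iff
\exists\,x_0\in i^{-1}(x)\ \text{with}\ \varphi_{x_0}^{Z_0/X_0}\subseteq C_0,
$$
whose left- and right-hand sides are precisely the conditions defining $x\in\langle Z/X,\iota_*C_0\rangle$ and $x\in i_\exists\langle Z_0/X_0,C_0\rangle$. For the backward direction, if some lift satisfies $\varphi_{x_0}^{Z_0/X_0}\subseteq C_0$, then $\iota(\varphi_{x_0}^{Z_0/X_0})$ is a nonempty subset of the class $\varphi_x^{Z/X}$ contained in $\iota(C_0)$; since $\iota_*C_0$ is a conjugacy domain containing $\iota(C_0)$, it contains the whole $\Sigma_Z$-conjugacy closure of $\iota(\varphi_{x_0}^{Z_0/X_0})$, which is all of $\varphi_x^{Z/X}$. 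For the forward direction, if $\varphi_x^{Z/X}\subseteq\iota_*C_0$, choose $h\in\varphi_x^{Z/X}$; by the definition of $\iota_*C_0$ as the smallest conjugacy domain containing $\iota(C_0)$, $h$ is $\Sigma_Z$-conjugate to $\iota(c)$ for some $c\in C_0$, and by the description of $i^{-1}(x)$ above this $c$ is realised as $\varphi_{x_0}^{Z_0/X_0}$ for some lift $x_0$; since $C_0$ is a conjugacy domain, $\varphi_{x_0}^{Z_0/X_0}\subseteq C_0$. The fact that $\iota_*C_0$ is a genuine conjugacy domain, used throughout, plays here the role that the short exact sequence of groups with operators played in \ref{galtwr1}.

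The main obstacle will be the lifting/fibre analysis of the second paragraph: showing that the fibre $i^{-1}(x)$ realises \emph{exactly} the conjugacy classes of $\Sigma_{Z_0}$ mapping into $\varphi_x^{Z/X}$, neither more nor fewer, so that the two membership conditions are genuinely equivalent rather than related by only one inclusion. This is where the generalised-conjugation condition on $\iota$ and the Cartesian identification of $Z_0$ are indispensable; once it is in place, both directions of the equivalence, and hence the lemma, follow formally.
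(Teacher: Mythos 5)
There is a genuine gap, and it sits exactly where you flagged it. Your strategy hinges on the intermediate identity $i_\exists\langle Z_0/X_0,C_0\rangle=\langle Z/X,\iota_*C_0\rangle$ as subassignments of $X$, justified by the claim that $i_s$ is surjective on realisations. But $X_0=\iota^*X=\bigcup_{\sigma_0\in\Sigma_0}X^{\iota(\sigma_0)}$ is the union of the fixed-point loci for the operators coming from $\Sigma_0$ only, so a point $x\in X^\tau(F,\varphi)$ with $\tau\in\Sigma\setminus\iota(\Sigma_0)$ simply does not lie in the image of $i$. The generalised-conjugation hypothesis of \ref{indinj} does not repair this: it says every $\sigma\in\Sigma$ is conjugate \emph{into} $\Sigma_0$, which means every point of $X$ is carried into $X_0$ by some automorphism over $Y$ --- i.e.\ has a translate in the same $f$-fibre lying in $X_0$ --- not that the point itself lies in $X_0$. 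Since $\iota_*C_0$ is the full $\Sigma_Z$-conjugacy closure of $\iota(C_0)$, the set $\langle Z/X,\iota_*C_0\rangle$ contains points of every such conjugate type, whereas $i_\exists\langle Z_0/X_0,C_0\rangle$ contains only those of type in $\iota(\Sigma_0)$; the two subassignments of $X$ differ in general, so your forward implication fails as stated (the fibre $i^{-1}(x)$ you want to lift through can be empty).

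The correct argument, which is the one the paper gives in a single sentence, is to observe that equality is only claimed \emph{after} applying $f_\exists$: an $f$-fibre of $i_\exists\langle Z_0/X_0,C_0\rangle$ is one of several ``blocks'' of the corresponding $f$-fibre of $\langle Z/X,\iota_*C_0\rangle$, the blocks being in bijection with the distinct conjugates of $C_0$ contained in $\iota_*C_0$, and conjugation permutes these blocks while staying inside the $f$-fibre. Your backward inclusion (hence $f_\exists i_\exists\langle Z_0/X_0,C_0\rangle\subseteq f_\exists\langle Z/X,\iota_*C_0\rangle$) is fine, and your first paragraph reducing $f_{0\exists}$ to $f_\exists i_\exists$ is correct. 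To close the argument, replace your forward direction by: given $x$ with $\varphi_x^{Z/X}\subseteq\iota_*C_0$, choose $h\in\varphi_x^{Z/X}$ conjugate to some $\iota(c)$ with $c\in C_0$, and use the conjugating element to move $x$ to a point $x'$ of $X_0$ in the same $f$-fibre with $\varphi_{x'}^{Z_0/X_0}\subseteq C_0$; then $f(x)=f(x')$ lies in $f_\exists i_\exists\langle Z_0/X_0,C_0\rangle$, which is all that is needed.
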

\begin{proof}
Note that an $f$-fibre of $i_{\exists}\langle Z_0/X_0,C_0\rangle$ is one of
the blocks
of an $f$-fibre of $\langle Z/X,\iota_*C_0\rangle$ which are in bijective correspondence with the different conjugates of $C_0$ contained in $\iota_*C_0$.
Thus the $f_{\exists}$-images
are equal.
\end{proof}

The main result of this paper is that the class of Galois formulae over fields with
Frobenii is closed under
taking images by $f_\exists$. More precisely, we have the following.

\begin{theorem}\label{eximageisgal}
Let $(S,\sigma)$ be a difference scheme of finite $\sigma$-type over $\Z$ and let 
$f:(X,\sigma)\to (Y,\sigma)$ be a morphism of %
$(S,\sigma)$-difference schemes of
finite $\sigma$-type. 
For every Galois formula $\cA$ of $X$, the subassignment 
$f_{\exists}\cA$ is $\equiv_S$-equivalent to a Galois formula on $Y$, i.e.,
there exists a  Galois formula
$\cB$ on $Y$ such that
$$f_{\exists}\cA\equiv_S\cB.$$ 
When $\cA$ is effectively given, a $\dag$-primitive recursive procedure yields 
an effectively given $\cB$ and a constant $m>0$ 
such that for each closed $s\in S$ and each finite field $k$ with $|k|\geq m$ with $(\bar{k},\varphi_k)$ extending $(\kk(s),\sigma^s)$, 
$$(f_{\exists}\cA)_s(\bar{k},\varphi_k)=f_s(\cA_s(\bar{k},\varphi_k))=\cB_s(\bar{k},\varphi_k).$$
\end{theorem}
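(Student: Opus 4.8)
The plan is to reduce the statement to the case of a \emph{generic difference covering} treated geometrically, and then to stratify $Y$ by applying the procedure fibrewise. First I would use the Boolean algebra structure on Galois formulae together with the direct image lemmas \ref{galtwr1} and \ref{galtwr2} to reduce to a single stratum: since $\cA=\langle X,Z_i/X_i,C_i\rangle$ is a finite partition and $f_\exists$ of a union is the union of the images, it suffices to compute $f_\exists\langle Z_i/X_i,C_i\rangle$ for each $i$ and then assemble the results into a single Galois stratification on $Y$ using the lattice operations. Thus I may assume $X=X_i$ is integral normal, equipped with one connected \'etale Galois covering $(Z,\Sigma_Z)\to(X,\sigma)$ and a single conjugacy domain $C$.

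\emph{Reduction to a covering situation.} Next I would subsume the morphism $f$ in a generic difference covering using \ref{subsgencov}, so that after a $\sigma$-localisation of $Y$ (the localised pieces being handled by Noetherian induction on the dimension of $Y$, each lower-dimensional stratum re-entering the same procedure) the relevant morphism is dominant with control over its relative algebraic closure. The crucial geometric device is the \emph{baby Stein factorisation} already exploited in \ref{chevalley}: by passing to the normalisation $\tilde{Y}$ of $Y$ in the relative algebraic closure of $\kk(Y)$ in $\kk(X)$, I obtain $(X,\sigma)\to(\tilde{Y},\Sigma)\to(Y,\sigma)$ where the first map has geometrically integral fibres (so by \ref{loc-si-integralfibres} is surjective after localising, contributing a \emph{full} $\top$-stratum to the image) and the second is generically $\sigma$-\'etale with Galois function-field extension. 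The effectivity of all these steps is guaranteed by \ref{E2}, \ref{E4}, \ref{E5} and \ref{chevalley}.

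\emph{The benign tower.} The heart of the argument is to analyse the \'etale part $(\tilde{Y},\Sigma)\to(Y,\sigma)$ via Babbitt's decomposition \ref{babb-sch}, producing a tower
$$
\tilde{Y}\simeq Y_n\to\cdots\to Y_1\to Y_0\to Y,
$$
with $Y_0\to Y$ finite \'etale quasi-Galois and each $Y_{i+1}\to Y_i$ \'etale benign Galois. I would then propagate the conjugacy datum down this tower one step at a time. At the finite Galois bottom step, \ref{galtwr1} (the short exact sequence of Galois groups) converts the formula on $Y_0$ into a genuine Galois formula on $Y$ over the covering $Z/Y$, carefully checking that $C$ descends to a conjugacy domain downstairs. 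At each benign step, the key point is the lifting Lemma \ref{benignlift}: because $Y_{i+1}\to Y_i$ is \'etale benign Galois, every point of $Y_i^\tau$ rational over a sufficiently large $(\bar{k},\varphi_k)$ lifts to an $\bar{x}\in Y_{i+1}^\sigma$, so the benign morphisms are surjective on Frobenius-points and the image computation is transparent, the remaining bookkeeping being the transport of conjugacy domains through the split exact sequence of \ref{babsplit}. This is where the restriction to \emph{high enough} powers of Frobenius and the constant $m$ enter, and where the Chebotarev lemma \cite[\ref{twisted-cebotarev}]{ive-tch} is invoked to match $\varphi$-substitution classes with the combinatorics of $\Sigma_Z$.

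\emph{Expected obstacle.} The main difficulty I anticipate is genuinely difference-theoretic: unlike in the classical algebraic case, the benign covers carry \emph{infinite} Galois groups of the special product form $G=\prod_i G_i$ with $()^\sigma$ shifting between the factors, so the conjugacy domains $C\subseteq\Sigma_Z$ do not simply push forward as in the finite case. Controlling how a $()^\sigma$-conjugacy domain behaves as one descends a benign step — ensuring it remains a conjugacy domain and that the fibre-block bijection of \ref{galtwr2} is respected — requires the splitting \ref{babsplit} and the twisted-conjugacy structure of benign groups, and this is precisely the new phenomenon that has no algebraic analogue. The surjectivity of benign morphisms \emph{only on points over fields with large Frobenii} (via \ref{benignlift}) rather than on the nose is what forces the $\equiv_S$-formulation and the threshold $m$, so the delicate step is to make the bound $m$ uniform across the finitely many strata and $\dag$-primitive recursive via \ref{E8}.
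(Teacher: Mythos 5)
Your overall architecture (stratify, reduce to a basic formula $\langle Z/X,C\rangle$ on an integral normal stratum, baby Stein factorisation, Babbitt's decomposition, benign lifting) matches the paper's. But there is a genuine gap at the centre of the argument: you dispose of the geometrically-integral-fibres part of the Stein factorisation by saying it is surjective after localising and therefore ``contributes a full $\top$-stratum to the image''. That conflates the image of the \emph{stratum} (which is indeed everything --- that is all \ref{chevalley} gives you) with the image of the \emph{Galois formula} $\langle Z/X,C\rangle$ supported on it, which is a proper subassignment whenever $C\neq\Sigma_Z$. In the paper this is Case 1 and it is the heart of the proof: one forms the normalisation $W$ of $Y$ in the relative algebraic closure of $\kk(Y)$ inside $\kk(Z)$ (note: inside $\kk(Z)$, not $\kk(X)$), obtains the exact sequence $1\to\Gal(Z/X_W)\to\Gal(Z/X)\to\Gal(W/Y)\to1$, pushes $C$ forward to $D=\pi_*(C)\subseteq\Sigma_W$, and proves $f_\exists\langle Z/X,C\rangle=\langle W/Y,D\rangle^{\widetilde{}}$. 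The nontrivial inclusion --- given $y$ with $\varphi_{k,y}\subseteq D$, produce an $x$ in the fibre $X_y$ with $\varphi_{k,x}\subseteq C$ --- is precisely where the twisted Chebotarev lemma \cite[\ref{twisted-cebotarev}]{ive-tch} is invoked and where the threshold $m$ originates. Your proposal never transports the conjugacy datum from the cover $Z/X$ down to a cover of $\tilde{Y}$, so the subsequent \'etale and benign analysis has nothing to push forward.

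Relatedly, you have inverted where the largeness hypothesis enters. The lifting lemma \ref{benignlift} holds for \emph{every} algebraically closed $(F,\varphi)$ extending the residue field, with no bound on the Frobenius, and the paper's Case 2(b) accordingly holds for all $k$; Chebotarev is not used in the benign tower at all, and the ``expected obstacle'' you describe there (conjugacy domains failing to push forward through the infinite product groups) is resolved by \ref{benignlift} together with the exact sequence, not by a density argument. The restriction to high powers of Frobenius, the constant $m$, and the appeal to the effective estimate \ref{E8} all come from Case 1. Repairing the argument means restoring the Case-1 computation as above and relocating the Chebotarev appeal there.
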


Proof.
The proof is by devissage, whereby in each step we calculate the direct image
on a dense open piece and postpone the calculation on the complement to the
next step. At the end of the procedure, we will have obtained the image of each
piece of the domain as a Galois stratification supported on a locally closed piece
of the codomain. To finish, we extend all of these trivially to produce Galois
formulae on the whole of $Y$,
and we take their disjunction to represent the total image as a Galois formula.  

It is straightforward to reduce to the case where $X$ and $Y$ are integral, using
\ref{E1}.
Moreover, the case when $f$ is purely inseparable or purely $\sigma$-inseparable is easily resolved.
 
Thus, by a noetherian induction trick using generic $\sigma$-smoothness \ref{loc-si-smooth} and \ref{chevalley}, after a possible refinement of 
$\cA$, we obtain stratifications $X_i$ and $Y_j$ into integral normal locally 
closed $(S,\sigma)$-subschemes of $X$ and $Y$ such that for every $i$ there
exists a $j$ with $f(X_i)\subseteq Y_j$ and 
$f_i:=f\restriction_{X_i}:(X_i,\sigma)\to (Y_j,\sigma)$ is $\sigma$-smooth.
This can be done in a $\dag$-primitive recursive way, using \ref{E5} and the
effective case of \ref{chevalley}.

By the philosophy of the proof, we can restrict our attention to one of the $f_i$, so we disregard the index $i$ and write $f:(X,\sigma)\to(Y,\sigma)$ in place of $f_i$, and we may assume that $\cA$ on
$X$ is basic, $\cA=\langle Z/X,C\rangle$, where 
$(Z,\Sigma_Z)/(X,\sigma)$ is a connected Galois covering with group $(G,\tilde{\Sigma}_Z)$ and
$C$ is  a $G$-conjugacy domain in $\Sigma_Z$.

By considering the normalisation $\tilde{Y}$ in the relative algebraic closure of 
$\kk(Y)$ inside
$\kk(X)$, we obtain a baby Stein factorisation 
$(X,\sigma)\to (\tilde{Y},\sigma)\to (Y,\sigma)$, 
where the first morphism has generically geometrically integral fibres, and the second is generically $\sigma$-\'etale. By a further localisation, using  \ref{loc-si-integralfibres}
and \ref{loc-si-smooth} and \ref{chevalley} 
we can assume the first morphism has connected fibres and the second is 
$\sigma$-\'etale. All of this is $\dag$-primitive recursive by \ref{E4}, \ref{E2}, \ref{E5}.
Thus, we can split our considerations into two cases.

\noindent{\bf Case 1:} $f$ has geometrically integral fibres. 

The proof of this case can be extrapolated from \cite[\ref{adjpushpull}]{ive-tch} 
for finite-dimensional schemes, but now we have to treat a general case. 
We give a direct proof, following analogous signposts.

Let $(W,\Sigma_W)$ be the normalisation of $(Y,\sigma)$ in the relative algebraic closure of $\kk(Y)$ in $(\kk(Z),\Sigma_Z)$, in the effective case calculated
by \ref{E2} and \ref{E4}.
Then $(W,\Sigma_W)$ is a Galois cover of $(Y,\sigma)$. %
Writing $(X_W,\Sigma_{X_W})=(X,\sigma)\times_{(Y,\sigma)}(W,\Sigma_W)$,  we obtain an exact sequence
\begin{equation}\label{exseq}
1\to\Gal(Z/X_W)\to\Gal(Z/X)\to\Gal(W/Y)\to 1,
\end{equation}
together with a $\diff$-quotient morphism 
$$
\pi:\Sigma_Z\to\Sigma_Z/\Gal(Z/X_W)=\Sigma_W.
$$
Let $D=\pi_*(C)$ be the image of $C$ in $\Sigma_W$, computed by \ref{E7}, and we claim that
$$
f_{\exists}\langle Z/X,C\rangle=\langle W/Y,D\rangle^{\widetilde{}},
$$
i.e., for
all closed $s\in S$, all large enough $k$ with $(\bar{k},\varphi_k)$ extending
$(\kk(s),\sigma^s)$,  
\begin{equation}\label{projeq}
\begin{split}
\{y\in Y_s(\bar{k},\varphi_k) \,\,|\,\, \exists x\in X_s(\bar{k},\varphi_k),  {\varphi}_{k,x}\in{C}, f_s(x)=y\}\\
=\{y\in Y_s(\bar{k},\varphi_k) \,\,|\,\, {\varphi}_{k,y}\in {D}\}.
\end{split}
\end{equation}
A routine verification of the left to right inclusion needs no assumptions on the size of $k$.
Conversely, let $\bar{y}\in Y_s(\bar{k},\varphi_k)$, 
${\varphi}_{k,\bar{y}}=D_0\subseteq{D}$. 
Pick some
$\bar{y}'$ in the fibre of $W/Y$ above $\bar{y}$ with $\varphi_{k,\bar{y}'}\in D_0$.
Let us denote by $y\in Y_s$ and $y'\in W_s$ the loci of $\bar{y}$ and $\bar{y}'$,
$\tilde{y}=\spec^{\sigma^y}(\kk(y))$, $\tilde{y}'=\spec^{\Sigma^{y'}}(\kk(y'))$
(where $\Sigma^{y'}$ is shorthand for $\Sigma_W^{y'}$)
 and consider
the diagram
$$
 \begin{tikzpicture} 
 [cross line/.style={preaction={draw=white, -,
line width=3pt}}]
\matrix(m)[matrix of math nodes, minimum size=1.7em, nodes={circle},
inner sep=0pt,
row sep=1em, column sep=1em, text height=1.5ex, text depth=0.25ex]
 { 
&&[2em]&[-3.2em] 	 |(u3)|{Z_{y'}} &  & [2em] \\
&&&                       & |(uP)|{X_{y'}} &    |(u2)| {\tilde{y}'}      \\[2em]
&&&                       &  |(u1)|{X_y} &    |(uh)|{\tilde{y}}           \\  [-7em]
 	 |(3)|{Z}  & &&&&\\
                       & |(P)|{X_W} & |(2)| {W}          &&&&\\[2em]
                        &|(1)|{X}                         & |(h)|{Y} &&&&\\};
\path[->,font=\scriptsize,>=to, thin]
(uP) edge (u1) edge (u2)
(u3) edge (u1) edge (u2) edge (uP)
(u2) edge (uh)
(P) edge  (1) edge (2)
(1) edge  (h)
(u3) edge (3)
(uP) edge[cross line] (P)
(u1) edge (1)
(u2) edge[cross line] %
(2)
(uh) edge %
(h)
(u1) edge[cross line] (uh)
(2) edge[cross line]  (h)
(3) edge (1) edge[cross line] (2) edge (P)
;
\end{tikzpicture}
$$
where 
$(X_y,\sigma_y)=(X,\sigma)\times_{(Y,\sigma)}(\tilde{y},\sigma^y)$ is
the fibre of $X$ above $y$, 
$(X_{y'},\Sigma_{y'})=
(X_W,\Sigma_{X_W})\times_{(W,\Sigma_W)}(\tilde{y}',\Sigma^{y'})$ 
is the fibre of $X_W$ above  $y'$, and  
$$(Z_{y'},\Sigma_{y'})=
(Z,\Sigma_Z)\times_{(X_W,\Sigma_{W_X})}(X_{y'},\Sigma_{y'})
=(Z,\Sigma_Z)\times_{(W,\Sigma_W)}(\tilde{y}',\Sigma^{y'})$$
is the fibre of $Z$ above $y'$.
By construction, $Z\to W$ has geometrically connected fibres so we conclude that
$Z_{y'}$ is geometrically connected and $\Gal(Z_{y'}/X_{y'})\cong\Gal(Z/X\times_YW)$.
In the diagram with exact rows
\begin{center}\begin{tikzpicture}
\matrix(m)[matrix of math nodes, row sep=2.4em, column sep=1.5em, text height=1.5ex, text depth=0.25ex]
{
|(l1)|{1} &|(l2)|{\Gal(Z_{y'}/X_{y'})}& |(l3)|{{\Gal}(Z_{y'}/X_y)} &|(l4)|{{\Gal}(\tilde{y}'/\tilde{y})}&|(l5)|{1}\\
|(1)|{1} & |(2)| {\Gal(Z/X_W)}& |(3)| {{\Gal}(Z/X)} & |(4)|{{\Gal}(W/Y)} &|(5)|{1}
\\};
\path[->,font=\scriptsize,>=to, thin]
(1) edge (2)
(2) edge (3)
(3) edge (4)
(4) edge (5)
(l1)edge(l2)
(l2)edge(l3)
(l3)edge(l4)
(l4)edge(l5)
(l2)edge(2)
(l3)edge(3)
(l4)edge(4);
\end{tikzpicture}\end{center}
the left vertical arrow is an isomorphism and 
${\Gal}(\tilde{y}'/\tilde{y})={\Gal}(\kk(y')/\kk(y))$.
It follows that $\Gal(Z_{y'}/X_y)=\Gal(Z/X)\times_{\Gal(W/Y)}\Gal(\tilde{y}'/\tilde{y})$
and we get a Cartesian diagram of difference structures:
$$
\begin{tikzpicture} 
\matrix(m)[matrix of math nodes, row sep=1.5em, column sep=1.4em,text height=1.3ex, text depth=0.25ex]
 {                       & |(P)|{\Sigma_{\smash{\mathrlap{y'}{}}}} &           \\
 |(1)|{\Sigma_{\smash{\mathrlap{Z}{}}}} &                         & |(2)| {\Sigma^{\smash{\mathrlap{y'}{}}}}\\
                & |(h)|{\Sigma_{\smash{\mathrlap{W}{}}}}            &\\}; 
\path[->,font=\scriptsize,>=to, thin]
(P) edge  (1) edge (2)
(1) edge  (h)
(2) edge  (h);
\end{tikzpicture}
$$
Thus we can find a conjugacy domain $C'\subseteq\Sigma_{y'}$ which maps into 
$C$ in
$\Sigma_Z$ (and eventually to $D_0\subseteq\Sigma_W)$, as well as onto 
$\varphi_k$.
It
suffices to find an $x\in X_y(\bar{k},\varphi_k)$ with ${\varphi}_{k,x}\subseteq C'$ 
with respect to the
cover $Z_{y'}/X_y$, and this is possible for large enough $k$ by Twisted Chebotarev 
\cite[\ref{twisted-cebotarev}]{ive-tch}. The relevant bound for the size of $k$ can be
calculated by \ref{E8}.  

\noindent{\bf Case 2:} $f$ is $\sigma$-\'etale. 
Using \ref{gal-cl}, modulo a $\sigma$-localisation, we may assume that
the quasi-Galois closure $h:(\tilde{X},\tilde{\Sigma})\to (Y,\sigma)$ of $X$ over $Y$ strictly 
dominates $f:(X,\sigma)\to(Y,\sigma)$, i.e., that the morphism 
$r:(\tilde{X},\tilde{\Sigma})\to (X,\sigma)$ is surjective. Then 
$$
f_{\exists}\cA=f_{\exists}r_{\exists}r^*\cA=h_{\exists}r^*\cA,
$$
so it is enough to show that the direct image by $h_{\exists}$ of a Galois formula
is again Galois. In the effective case, we can do this via  \ref{E6}.

In other words, we may assume that $f:(X,\Sigma)\to(Y,\sigma)$ is $\sigma$-\'etale
quasi-Galois, so we can benefit from Babbitt's decomposition. Indeed, modulo
a localisation, \ref{babb-sch} yields a decomposition of $f$ as
$$
(X,\Sigma)\simeq(X_n,\Sigma_n)\to\cdots\to(X_1,\Sigma_1)\to(X_0,\Sigma_0)\to(Y,\sigma),
$$
with $(X_0,\Sigma_0)\to(Y,\sigma)$ finite \'etale quasi-Galois, and for $i\geq 0$, 
$(X_{i+1},\Sigma_{i+1})\to(X_i,\Sigma_i)$ \'etale benign  Galois. This can be achieved
in a $\dag$-primitive recursive way by using \ref{E3} and \ref{E4}.
We can reduce to two
 subcases as follows.

\noindent{\bf Case 2(a):} $f_0:(X_0,\Sigma_0)\to(Y,\sigma)$ is finite \'etale quasi-Galois. %
We are given a Galois  covering $(Z_0,\Sigma_{Z_0})\to (X,\Sigma_0)$ and
a conjugacy class $C_0$ in $\Sigma_{Z_0}$. Thus 
$(Z_0,\Sigma_{Z_0})\to(Y,\sigma)$ is quasi-Galois so it is subsumed in 
a Galois covering $(Z,\Sigma_Z)\to(Y,\sigma)$ which is obtained by 
essentially just expanding the difference structure via 
$\iota_Z:\Sigma_{Z_0}\to\Sigma_Z$ so that $Z_0=\iota_Z^*Z$ and we can
apply \ref{galtwr2} and \ref{galtwr1} to obtain
$$
f_{0{\exists}}\langle Z_0/X_0,C_0\rangle=
f_{\exists}\langle Z/X,\iota_*C_0\rangle=
\langle Z/Y,\iota_*C_0\rangle.
$$
The relevant calculations in the effective case are performed using \ref{E6} and
\ref{E7}.

\noindent{\bf Case 2(b):} $f:(X,\Sigma)\to(Y,T)$ is benign \'etale quasi-Galois.  
We are given a Galois covering $(Z,\Sigma_Z)\to(X,\Sigma)$ and a conjugacy domain $C$ in $\Sigma_Z$. Babbitt's decomposition \ref{babb-sch} applied to $Z/Y$ yields 
a sequence 
$$
(Z,\Sigma_Z)\simeq (Z_n,\Sigma_n)\to\cdots\to (Z_1,\Sigma_1)\to 
(Z_0,\Sigma_0)=(W,\Sigma_W)\to (Y,T)$$
 where $(W,\Sigma_W)/(Y,T)$  can be assumed to be  
a finite \'etale Galois covering %
and $Z_{i+1}/Z_{i}$ is benign for $i\geq 0$. Since $\kk(X)$ is linearly disjoint from $\kk(W)$ over $\kk(Y)$,
we obtain an exact sequence of the form (\ref{exseq}) again, and we have
the corresponding $\diff$-morphism 
$\pi:\Sigma_Z\to\Sigma_Z/\Gal(Z/X_W)=\Sigma_W$. 
Let $D=\pi_*C$ be the image of $C$ in $\Sigma_W$,
and we claim that (\ref{projeq}) holds for any $s$ closed in $S$ and $k$ and  
$(\bar{k},\varphi_k)$ extending $(\kk(s),\varphi_s)$. 
To see the non-trivial inclusion, let 
$y$ be an element of the right hand side and let $z_0\in W=Z_0$ such that $z_0\mapsto y$ and
${\varphi}_{k,z_0}\in {D}$. Using the property \ref{benignlift} repeatedly, we can lift $z_0$ through
the `stack' of benign extensions $Z_{i+1}/Z_{i}$ to a point 
$z\in Z^{\tilde{\sigma}}(\bar{k},\varphi_k)$ with $\tilde{\sigma}\in{C}$, and then the image
$x$ of $z$ in $X_s$ has the properties $\varphi_{k,x}\sim\tilde{\sigma}\in C$ 
and $f(x)=y$.
This case is $\dag$-primitive recursive by \ref{E2}, \ref{E4}, \ref{E7}.

\begin{corollary}\label{unimageisgal}
With assumptions of \ref{eximageisgal}, it makes sense to define a subassignment
$$
f_{\forall}\cA=\lnot f_{\exists}(\lnot\cA),
$$
and it is again a Galois formula on $Y$.
\end{corollary}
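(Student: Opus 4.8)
The plan is to read off the statement as a purely formal consequence of the main theorem \ref{eximageisgal} together with the Boolean algebra structure on Galois formulae. First I would check that the subassignment $f_\forall\cA$ is well defined and identify its realisation. For each $s\in S$ and each algebraically closed $(F,\varphi)$ extending $(\kk(s),\sigma^s)$, the negation $\lnot\cA$ is evaluated as the pointwise complement $X_s(F,\varphi)\setminus\cA_s(F,\varphi)$ (since at each point the local substitution is a single conjugacy class, it lies either in $C_i$ or in $\Sigma_i\setminus C_i$), the operator $f_\exists$ then takes its $f_s$-image inside $Y_s(F,\varphi)$, and the outer $\lnot$ takes the set-theoretic complement in $Y_s(F,\varphi)$. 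Unwinding the two complements gives
$$
(f_\forall\cA)_s(F,\varphi)=\{y\in Y_s(F,\varphi)\ |\ \forall x\in X_s(F,\varphi),\ f_s(x)=y\Rightarrow x\in\cA_s(F,\varphi)\},
$$
which is exactly the universal image (the De Morgan dual of $f_\exists$); note that points of $Y_s$ outside the image of $f_s$ are vacuously included.

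Next I would produce the Galois formula. By the Boolean algebra structure, $\lnot\cA$ is again a Galois formula on $X$, obtained by replacing each conjugacy domain $C_i$ with $\Sigma_i\setminus C_i$. Applying \ref{eximageisgal} to $\lnot\cA$ yields a Galois formula $\cB'$ on $Y$ with $f_\exists(\lnot\cA)\equiv_S\cB'$. Finally $\lnot\cB'$ is a Galois formula on $Y$, once more by the Boolean structure, and I claim
$$
f_\forall\cA=\lnot f_\exists(\lnot\cA)\equiv_S\lnot\cB'.
$$
Indeed, the realisation of a negated Galois formula is the pointwise complement (inside $Y_s(F,\varphi)$) of the realisation of the original, and $\equiv_S$ compares realisations only over the fields $(\bar{k},\varphi_k)$ with $|k|$ large; hence the equivalence $f_\exists(\lnot\cA)\equiv_S\cB'$ is preserved under taking complements. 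This exhibits $f_\forall\cA$ as $\equiv_S$-equivalent to the Galois formula $\lnot\cB'$, as required, and in particular justifies the reading of ``again a Galois formula'' as ``$\equiv_S$-equivalent to a Galois formula,'' consistently with \ref{eximageisgal}.

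For effectivity, when $\cA$ is effectively given the formula $\lnot\cA$ is computed by the (effective) Boolean operations, the pair $(\cB',m)$ is produced by the $\dag$-primitive recursive procedure of \ref{eximageisgal}, and $\lnot\cB'$ is again computed by the Boolean operations; the same constant $m$ then works for $f_\forall\cA$. I expect no genuine obstacle here beyond this bookkeeping: the one point requiring care is that the outer $\lnot$ in the definition of $f_\forall\cA$ must be interpreted as a set-theoretic complement at the level of subassignments (not a priori as Boolean negation of a Galois formula), and one must observe that this complement commutes with $\equiv_S$ and ultimately agrees with the Boolean negation of $\cB'$.
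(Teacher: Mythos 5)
Your proposal is correct and is precisely the argument the paper intends: the corollary is stated as an immediate consequence of \ref{eximageisgal} combined with the Boolean algebra structure on Galois formulae (negate $\cA$, apply the direct image theorem, negate the result), and the paper gives no further proof. Your additional care about interpreting the outer negation as set-theoretic complement of subassignments and its compatibility with $\equiv_S$ is sound bookkeeping, not a deviation.
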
 

\subsection{Quantifier elimination for Galois formulae}

Let $(R,\sigma)$ be an integral normal
 difference ring of finite $\sigma$-type over $\Z$,
and let $(S,\sigma)=\spec^\sigma(R)$.
\begin{definition}
\begin{enumerate}
\item
A \emph{first-order formula} over $(S,\sigma)$ is a first-order expression
built in the usual way starting from terms which are difference polynomials
with coefficients in $(R,\sigma)$. If $x_1,\ldots,x_n$ are the free variables of a
formula $\theta$, and $r_1,\ldots,r_m\in R$ are all the coefficients of all polynomials
appearing as terms of $\theta$, we can express this dependence by
writing 
$$\theta(x_1,\ldots,x_n;r_1,\ldots,r_n),$$
where the $r_i$ are thought of as \emph{parameters} of $\theta$. 
\item An $(R,\sigma)$-formula $\theta(x_1,\ldots,x_n;r_1,\ldots,r_n)$ gives rise to
a subassignment $\tilde{\theta}$ of $\Af^n_{(S,\sigma)}$ by the following 
procedure. Let $s\in S$, and let $(F,\varphi)$ be an algebraically closed difference
field extending $(\kk(s),\varphi^s)$. Taking the images of the $r_i$ by the
composite
$$
(R,\sigma)\to(\OO_{S,s},\sigma_s^\sharp)\to(\kk(s),\sigma^s)\to (F,\varphi),
$$
we obtain a honest first-order formula in the language of difference rings on
the field $(F,\varphi)$, and we take its set of realisations to be the value
$$\tilde{\theta}(s,(F,\varphi))\subseteq\Af^n_s(F,\varphi).$$
\item An $(S,\sigma)$-subassignment $\cF$ of $\Af^n_S$ is called \emph{definable} if
there exists a first-order formula $\theta(x_1,\ldots,x_n)$ over $(R,\sigma)$
such that $\cF=\tilde{\theta}$.
\end{enumerate}
\end{definition}

\begin{theorem}[Quantifier elimination for fields with Frobenii]\label{qe-ffrob}
The class of definable $(S,\sigma)$-subassignments is equal to the class of 
$(S,\sigma)$-Galois formulae modulo the relation $\equiv_S$, i.e., 
with respect to fields with Frobenii over $S$. The quantifier elimination procedure
is $\dag$-primitive recursive.
\end{theorem}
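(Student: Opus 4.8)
The plan is to prove the two inclusions of classes separately. For the direction \emph{definable $\Rightarrow$ Galois}, I would argue by induction on the construction of the first-order formula $\theta(x_1,\ldots,x_n)$, exploiting the principle that each syntactic operation on formulae corresponds to a geometric operation on the associated subassignments of $\Af^n_{(S,\sigma)}$ that preserves the class of Galois formulae modulo $\equiv_S$. The crucial observation is that if $\theta\equiv\exists y\,\psi(y,x_1,\ldots,x_n)$, then $\tilde{\theta}=\pi_{\exists}\tilde{\psi}$, where $\pi:\Af^{n+1}_{(S,\sigma)}\to\Af^n_{(S,\sigma)}$ is the coordinate projection forgetting $y$; in other words, the existential quantifier is \emph{precisely} the direct image operation $f_{\exists}$, so that its preservation of Galois formulae is exactly the content of \ref{eximageisgal}.

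In the base case, a quantifier-free formula defines a $\sigma$-constructible subassignment, a finite Boolean combination of the difference-closed subschemes cut out by the difference polynomials occurring in $\theta$; such a subassignment is manifestly a Galois formula carried by trivial (identity) coverings, effectively obtained via \ref{E1}. For the inductive step, conjunction, disjunction and negation are handled by the Boolean algebra structure on Galois formulae (after passing to a common inflation and refinement so that the two stratifications share their coverings), the existential quantifier is handled by \ref{eximageisgal}, and the universal quantifier by its dual \ref{unimageisgal}, $f_{\forall}\cA=\lnot f_{\exists}(\lnot\cA)$. Each of these operations is $\dag$-primitive recursive, and since $\theta$ is built from finitely many such steps, the entire procedure is $\dag$-primitive recursive; the finitely many size bounds produced by the applications of \ref{eximageisgal} are combined by taking their maximum to yield a single constant $m$ witnessing $\tilde{\theta}\equiv_S\cA$.

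For the converse, that every Galois formula is definable, I would exhibit an explicit first-order formula realising $\tilde{\cA}$ for $\cA=\langle X,Z_i/X_i,C_i\rangle$. Membership of a point in the locally closed stratum $X_i$ is a quantifier-free difference-polynomial condition. The condition $\varphi_x\subseteq C_i$ on the local $\varphi$-substitution is expressed by quantifying over a lift $\bar{x}$ of $x$ to the affine cover $Z_i$ — whose coordinate ring is effectively presented, so that the fibre equations and the polynomial action of $(G_i,\tilde{\Sigma}_i)$ are both given explicitly — and asserting that $\varphi(\bar{x})=\tau\cdot\bar{x}$ for one of the finitely many representatives $\tau$ of the twisted conjugacy classes constituting $C_i$. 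Since $C_i$ is a union of finitely many $\tilde{\Sigma}_i$-conjugacy classes, this is a finite disjunction, and the correspondence between the group-theoretic data and the defining polynomial relations is supplied effectively by \ref{E7}. The resulting formula $\exists\bar{x}\,[\,\bar{x}\in Z_{i,x}\wedge\bigvee_{\tau}\varphi(\bar{x})=\tau\bar{x}\,]$ realises exactly $\{x:\varphi_x\subseteq C_i\}$ over every algebraically closed $(F,\varphi)$, so disjoining over $i$ reproduces $\tilde{\cA}$ as a definable subassignment, an honest equality and hence a fortiori an $\equiv_S$-equivalence.

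The genuine mathematical content of the forward direction is concentrated entirely in \ref{eximageisgal}, so the induction itself is essentially bookkeeping; the delicacy I anticipate lies in the converse translation, namely in verifying that the naive first-order description of $\varphi_x\subseteq C_i$ via lifts to $Z_i$ really computes the Artin symbol correctly. One must check that in an algebraically closed difference field a lift $\bar{x}$ with $\varphi(\bar{x})=\tau\bar{x}$ exists precisely for $\tau$ in the Frobenius (decomposition) class, so that the existential quantifier over lifts neither over- nor under-shoots the intended conjugacy domain. The remaining care is purely formal: ensuring that the inflations and refinements needed to run the Boolean algebra are compatible with the strata produced by the successive projections, and that the size bounds obtained are uniform in the closed points $s\in S$.
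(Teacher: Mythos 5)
Your proposal is correct and follows essentially the same route as the paper: induction on formula complexity for the forward direction, with the base case handled by trivial coverings, Boolean connectives by the Boolean algebra structure on Galois formulae (after pulling back along coordinate projections to align variables), and the quantifiers by \ref{eximageisgal} and \ref{unimageisgal}; and for the converse, the same explicit existential formula asserting the existence of a lift $z$ of $x$ to $Z_i$ with $\varphi$ acting as a representative $\sigma_i$ of the conjugacy class, disjoined over the classes in $C_i$. The point of care you flag at the end --- that the existence of a lift fixed in the twisted sense characterises exactly the local $\varphi$-substitution class --- is precisely what the paper's identity $\{x\in X_i:\ar(x)\subseteq C_i\}=\{x\in X_i:\exists z\in Z_i^{\sigma_i},\ z\mapsto x\}$ encapsulates.
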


\begin{proof}
Let us show by induction on the complexity of a first-order formula that
every $(S,\sigma)$-formula in the language of rings $\theta(x_1,\ldots,x_n)$
is equivalent to a Galois formula on $\Af^n_{(S,\sigma)}$.
\begin{enumerate}
\item If $\theta(x_1,\ldots,x_n)$ is a positive atomic formula, it is given by
a difference-polynomial equation $P(x_1,\ldots,x_n)=0$, which cuts out
a closed difference subscheme $Z$ of  $\Af^n_{(S,\sigma)}$. We can stratify
the affine space into normal locally closed pieces $X_i$ such that each
piece is either completely in $Z$ or in its complement. For each $X_i$, we
choose a trivial Galois covering $(X_i,\sigma)\to(X_i,\sigma)$, and we let 
$C_i=\{\sigma\}$ when
$X_i\subseteq Z$, and $C_i=\emptyset$ otherwise. Then 
$\cA=\langle  \Af^n_S,X_i/X_i,C_i\rangle$ 
has the property that
$$\tilde{\theta}=\tilde{\cA}.$$
\item If $\theta(\bar{x})=\theta_1(\bar{x}_1)\land\theta_2(\bar{x}_2)$, 
where it is assumed that $\bar{x}$ is the union of variables in $\bar{x}_1$
and $\bar{x}_2$, we choose 
the corresponding projections $p_i:\Af^{|\bar{x}|}\to\Af^{|\bar{x}_i|}$. By induction
hypothesis, we can find Galois formulae $\cA_i$ on $\Af^{|\bar{x}_i|}$
such that $\theta_i\equiv_S\cA_i$.
Then
$$
\theta\equiv_S p_1^*\cA_1\land p_2^*\cA_2.
$$
\item If $\theta=\theta_1\lor\theta_2$, we proceed analogously to the previous step.
\item If $\theta=\lnot\theta'$, and $\theta'\equiv_S\cA$, then
$$\theta\equiv_S\lnot\cA.$$
\item\label{ppet} If $\theta(x_2,\ldots,x_n)=\exists x_1 \theta'(x_1,x_2,\ldots,x_n)$, and
$\theta'\equiv_S\cA$ on $\Af^n$, writing $x_1$ for the projection $\Af^n\to\Af^{n-1}$
to the variables $x_2,\ldots,x_n$, we have that
$$
\theta\equiv_S\exists x_1\theta'\equiv_S  {x_1}_{\exists}\cA,
$$
which is Galois by \ref{eximageisgal}.
\item\label{ssest} If $\theta=\forall x_1\theta'$, and $\theta'\equiv_S\cA$, then
$$
\theta\equiv_S\forall x_1\theta'\equiv_S {x_1}_{\forall}\cA,
$$
which is Galois by \ref{unimageisgal}.
\end{enumerate}
We have checked all cases so the induction is complete. Note that
working over fields with Frobenii is only crucial in steps \ref{ppet} and \ref{ssest}.

Conversely, suppose we have a Galois stratification 
$\cA=\langle \Af^n_{(S,\sigma)}, Z_i/X_i,C_i\rangle$. By refining it further, we
may assume that each Galois covering $(Z_i,\Sigma_i)\to (X_i,\sigma)$ with
group $(G,\tilde{\Sigma})$ is embedded in some
affine space, in the sense that $Z_i$ is embedded in some $\Af^m_S$,
and all automorphisms corresponding to elements of $G$ are restrictions of difference rational endomorphisms of $\Af^m_S$ to $Z_i$, and the canonical projection
$Z_i\to X_i$ is a restriction of difference rational morphism $\Af^m_S\to\Af^n_S$.
Then, if $C_i$ is the conjugacy class of some element $\sigma_i\in \Sigma$, the set
$$
\{x\in X_i: \ar(x)\subseteq C_i\}=\{x\in X_i: \exists z\in Z_i^{\sigma_i}, z\mapsto x\}
$$  
is clearly expressible in a first-order way using an existential formula in the
language of difference rings. When $C_i$ is a union of conjugacy classes, we
take the disjunction of the corresponding difference ring formulae.
\end{proof}

Let $T_\infty$ be the set of first-order sentences true in difference fields
$(\bar{k},\varphi_k)$ with $k$ a sufficiently large finite field.

\begin{corollary}
 The theory $T_\infty$ is
decidable by a $\dag$-primitive recursive procedure. Moreover, for
each first-order sentence $\theta\in T_\infty$ a $\dag$-primitive recursive procedure
can compute the (finite) list of exceptional finite fields $k$ such that
$\theta$ does not hold in $(\bar{k},\varphi_k)$.
\end{corollary}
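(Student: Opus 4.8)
The plan is to reduce the decision problem for $T_\infty$ to the evaluation of a single Galois formula on the base, separating the \emph{eventual} behaviour (governed by Chebotarev) from the finitely many small fields (governed by a direct substitution). Since a sentence $\theta$ has no free variables, I would first apply the quantifier elimination \ref{qe-ffrob} to $\theta$ regarded as a formula in zero variables, obtaining $\dag$-primitive recursively a Galois stratification $\cA=\langle S,Z_i/S_i,C_i\rangle$ on $\Af^0_S=S=\specd(\Z)$ together with the effectivity bound $m$ furnished by \ref{eximageisgal}, so that $\tilde\theta$ and $\tilde\cA$ agree on every $(\bar k,\varphi_k)$ with $|k|\ge m$. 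For such a zero-dimensional base the fibre $\Af^0_s$ is a single point, so for each closed point $s=(p)$ and each finite field $k=\F_q$ (with $q=p^a$) the value $\tilde\cA(s,(\bar k,\varphi_k))$ is simply the truth value recording whether the local $\varphi_q$-substitution of $s$ in the relevant cover $Z_i/S_i$ lies in $C_i$. Thus $\theta\in T_\infty$ if and only if $\tilde\cA$ is ``true'' for all but finitely many finite fields.

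The strata $S_i$ partition $\specd(\Z)$ into a dense open (generic) piece $S_{\mathrm{gen}}$ meeting all but finitely many primes and finitely many closed strata, each supported on a single prime. Over $S_{\mathrm{gen}}$ the cover $Z_{\mathrm{gen}}/S_{\mathrm{gen}}$ carries a finite group with operator $(G,()^\sigma)$ and coset $\Sigma_{\mathrm{gen}}$; by the Twisted Chebotarev Lemma \cite[\ref{twisted-cebotarev}]{ive-tch}, every $()^\sigma$-conjugacy class in $\Sigma_{\mathrm{gen}}$ is realised as the substitution $\varphi_k$ of infinitely many large finite fields, so $\tilde\cA$ is eventually true over $S_{\mathrm{gen}}$ exactly when $C_{\mathrm{gen}}=\Sigma_{\mathrm{gen}}$. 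For each of the finitely many special primes $p$, as $a$ grows the sequence of substitutions $\varphi_{p^a}$ runs through a finite group and is therefore eventually periodic, so membership in the corresponding $C_i$ for all large $a$ is a finite, decidable condition. Both tests are $\dag$-primitive recursive by \ref{E7} (and \ref{E8} for the bounds), and their conjunction is precisely the criterion $\theta\in T_\infty$, which yields the decidability statement.

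For the second assertion, suppose $\theta\in T_\infty$. Combining $m$, the effective Chebotarev bound of \ref{E8} on $S_{\mathrm{gen}}$, and the computable onset of periodicity at each special prime, I would produce a bound $M\ge m$ such that $\theta$ holds in every $(\bar k,\varphi_k)$ with $|k|\ge M$; the exceptional fields then lie in the explicit finite set $\{k:|k|<M\}$. It remains to decide $(\bar k,\varphi_k)\models\theta$ for each such fixed $k=\F_q$. Here I would not use $\cA$, whose equivalence with $\tilde\theta$ is only guaranteed above $m$, but instead exploit that for a fixed $q$ the operator $\varphi_q$ is the concrete polynomial map $x\mapsto x^q$: substituting $\sigma^j\mapsto(\cdot)^{q^j}$ throughout $\theta$ turns it into an ordinary first-order sentence $\theta^\flat$ in the language of rings, and $(\bar k,\varphi_k)\models\theta$ if and only if $\bar\F_p\models\theta^\flat$. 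The latter is decidable, indeed primitive recursively, by classical Galois stratification and quantifier elimination over algebraically closed fields, so each small field can be tested and the failures listed.

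The genuinely hard ingredient is the passage in the second paragraph from ``$\tilde\cA$ true for all large $k$'' to the purely group-theoretic criterion $C_{\mathrm{gen}}=\Sigma_{\mathrm{gen}}$: this is exactly where the density statement of the Twisted Chebotarev Lemma, and hence ultimately the twisted Lang--Weil estimates of \cite{udi}, is indispensable, since without it one could not rule out that a class outside $C_{\mathrm{gen}}$ simply never occurs. Everything else is either the already-established effective quantifier elimination or elementary finite computation with Galois groups and with explicit varieties over a fixed finite field.
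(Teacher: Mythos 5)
Your overall architecture coincides with the paper's: apply the quantifier elimination \ref{qe-ffrob} to the sentence $\theta$ (in zero variables) to obtain a Galois stratification $\cA$ on $\spec^\sigma(\Z)$ together with the bound $m$ from \ref{eximageisgal}, reduce membership in $T_\infty$ to a group-theoretic condition on the conjugacy domains checkable via \ref{E7}, and dispose of the remaining small fields by substituting $\sigma\mapsto(\cdot)^{|k|}$ and invoking the classical primitive recursive decision procedure over $\bar{k}$. Your explicit treatment of the special primes $p\mid N$ --- periodicity of $\varphi_{p^a}$ in the finite cyclic Galois group of the fibre cover --- is in fact more careful than the paper's own write-up, which phrases the generic criterion only in terms of primes.

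There is, however, one genuine flaw: the density input in your second paragraph is misattributed, and the lemma you cite does not deliver the statement you need. The cover $Z_{\mathrm{gen}}/S_{\mathrm{gen}}$ is an arithmetic cover of an open subscheme of $\spec(\Z)$ (essentially a localised ring of integers of a number field), and the assertion that every conjugacy class of $\Gal(Z/S')$ is realised as the substitution $\varphi_k$ for infinitely many finite fields $k$ is the \emph{classical} Chebotarev density theorem for number fields --- which is exactly what the paper invokes at this point. The Twisted Chebotarev Lemma \cite[\ref{twisted-cebotarev}]{ive-tch} rests on the twisted Lang--Weil estimates and concerns the equidistribution of Artin symbols among the \emph{points of a fibre} over a fixed $(\bar{k},\varphi_k)$ as $|k|\to\infty$; here the fibre of $\Af^0_S$ over a closed point of $\spec(\Z)$ is a single point, so that lemma is vacuous and says nothing about how the class of $\varphi_k$ varies with $k$. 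Your closing remark that the twisted Lang--Weil estimates are indispensable for this step is therefore wrong: they are indispensable for the quantifier elimination producing $\cA$, but the final density argument on the base is classical algebraic number theory. A related simplification: once $\theta\in T_\infty$ forces all conjugacy domains to equal the full groups, every substitution lies in them automatically, so no effective Chebotarev bound (and no appeal to \ref{E8}) is needed to produce your $M$; the bound $m$ already works and the exceptional fields all satisfy $|k|<m$.
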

\begin{proof}
The quantifier elimination procedure produces a Galois stratification $\cA$ on 
the base $S=\spec(\Z)$ and a constant $m$ such that for every $p\in S$, 
and every $k$ of characteristic $p$ with $|k|\geq m$, 
$\theta(\bar{k},\varphi_k)=\cA(\bar{k},\varphi_k)$. The stratification $\cA$
stipulates the existence of a localisation
$S'=\Z[1/N]$ of $S$, a Galois cover $Z/S'$ and a conjugacy class $C$ in 
$\Gal(Z/S')$ such that, for $p\in S'$ (i.e., for $p$ not dividing $N$),
and $k$ of characteristic $p$ with $|k|\geq m$, $\theta$ holds in $(\bar{k},\varphi_k)$
if and only if $\varphi_k\in C$. By (the classical) Chebotarev's density theorem, this can hold for all but finitely many $p$ if and only if $C=\Gal(Z/S')$, which can be effectively
checked by \ref{E7}.

For each field $(\bar{k},\varphi_k)$ with characteristic of $k$ dividing $N$, or
$|k|<m$, once we interpret $\sigma$ as the Frobenius $\varphi_k$ with 
$\varphi_k(\alpha)=\alpha^{|k|}$, the formula $\theta$ can be treated as a formula
in the language of rings, which can be decided  by the
well-known primitive recursive decision procedure for the algebraically closed
 field $\bar{k}$.
\end{proof}

A more model-theoretic restatement of the above theorem would say
that the theory  $T_\infty$ of fields with Frobenii allows quantifier elimination down to
the class of Galois formulae. Given that 
$T_\infty$ happens to be (\cite{udi}) the theory of existentially closed difference
fields (ACFA), let us state an appropriate analogue of the above result.

We must emphasise that the statement below can be obtained unconditionally, i.e., 
without appealing to \cite{udi}, by replacing the use of \cite[\ref{udiLW}]{ive-tch} in the
present paper by the use of existential-closedness of models of ACFA
(i.e., by the use of the `ACFA-axiom'). This will be done in a separate paper
\cite{ive-tgsacfa}.

\begin{theorem}\label{stratACFA}
Let $(k,\sigma)$ be a difference field. 
Let $\psi(x)=\psi(x;s)$ be a first order formula in the language of difference rings in variables 
$x=x_1,\ldots,x_n$ with parameters $s$ from $k$. There exists a Galois 
stratification $\cA$ of the difference affine $n$-space over $k$ such that for 
every model $(F,\varphi)$ of ACFA which extends $(k,\sigma)$,
$$
\psi(F,\varphi)=\cA(F,\varphi).
$$
\end{theorem}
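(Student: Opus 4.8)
The plan is to reduce the statement to the quantifier elimination over fields with Frobenii already established in \ref{qe-ffrob}, and then to transfer the resulting equivalence from Frobenii to arbitrary models of ACFA by invoking the theorem of \cite{udi} that ACFA is precisely the limit theory $T_\infty$ of fields with Frobenii. The point is that the parameters living in the possibly `large' field $k$ are handled by universally quantifying over them and appealing to a parameter-free transfer, so that no embedding of $(k,\sigma)$ into a field with Frobenius is ever required.

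First I would set up the base. Let $s=s_1,\ldots,s_m$ be the parameters of $\psi$ and let $(R,\sigma)=\Z[s]_\sigma\subseteq k$ be the difference subring they generate; after a $\sigma$-localisation and normalisation (using \ref{locnorm} and \ref{E4}) we may assume that $(R,\sigma)$ is an integral normal difference ring of finite $\sigma$-type over $\Z$, and we put $(S,\sigma)=\spec^\sigma(R)$, so that the inclusion $R\hookrightarrow k$ corresponds to the generic point $\eta$ of $S$. Applying the effective quantifier elimination \ref{qe-ffrob} to the $(S,\sigma)$-formula $\psi(x;s)$ produces a Galois stratification $\cA_S$ on $\Af^n_{(S,\sigma)}$ with $\tilde{\psi}\equiv_S\cA_S$, and I take $\cA$ to be its fibre over $\eta$, base-changed to $k$. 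By the converse half of \ref{qe-ffrob}, the Galois formula $\cA_S$ is, uniformly in the base parameters, equivalent to an honest first-order formula $\beta(x;s)$ in the language of difference rings.

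The heart of the matter is the transfer. Crucially, the effective form of the direct image theorem \ref{eximageisgal} underlying \ref{qe-ffrob} furnishes a \emph{single} constant $m>0$, uniform over all closed points of $S$, such that $\tilde{\psi}$ and $\cA_S$ agree on the whole fibre $S(\bar{\ell},\varphi_\ell)$ for every finite field $\ell$ with $|\ell|\geq m$. Encoding `$s$ lies on $S$' by the finitely many defining $\sigma$-equations of $R$, this uniform agreement is exactly the assertion that the parameter-free sentence
$$
\Theta\ :\equiv\ \forall s\,\bigl[(s\in S)\to\forall x\,\bigl(\psi(x;s)\leftrightarrow\beta(x;s)\bigr)\bigr]
$$
holds in $(\bar{\ell},\varphi_\ell)$ for all $|\ell|\geq m$; that is, $\Theta\in T_\infty$. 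By the theorem of \cite{udi} that $T_\infty$ coincides with the consequences of ACFA, $\Theta$ holds in every model of ACFA. Finally, given any $(F,\varphi)\models\mathrm{ACFA}$ extending $(k,\sigma)$, the composite $R\hookrightarrow k\hookrightarrow F$ defines an $(F,\varphi)$-point of $S$ lying over $\eta$, so that `$s\in S$' holds; instantiating $\Theta$ at the actual parameters $s$ then yields $\psi(F,\varphi)=\beta(F,\varphi)=\cA(F,\varphi)$, as required.

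I expect the main obstacle to be the bookkeeping that makes the transfer legitimate: verifying that $\cA_S$ is genuinely equivalent to one first-order formula $\beta(x;s)$ uniformly in the base parameter $s$ (so that the coverings $Z_i/X_i$ and conjugacy data over $S$ assemble into a single difference-ring formula), and that the constant $m$ of \ref{eximageisgal} is indeed uniform across all closed points of $S$, so that the agreement can be packaged into the one parameter-free sentence $\Theta$. Once this uniformity is secured, the passage from Frobenii to ACFA is a purely formal consequence of $T_\infty=\mathrm{ACFA}$, and — as the remark preceding the statement indicates — is the sole use of \cite{udi} here; replacing it by the existential-closedness axioms of ACFA directly would give the unconditional argument announced in \cite{ive-tgsacfa}.
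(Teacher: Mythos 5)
Your proposal is correct and follows exactly the route the paper intends: the paper leaves \ref{stratACFA} without an explicit proof, presenting it as the restatement of \ref{qe-ffrob} obtained by combining the uniform-over-$S$ equivalence $\tilde{\psi}\equiv_S\cA_S$ (with the single constant $m$ from \ref{eximageisgal}) with Hrushovski's theorem that $T_\infty$ is the theory of existentially closed difference fields. Your careful packaging into the parameter-free sentence $\Theta$ and instantiation at the actual parameters via $R\hookrightarrow k\hookrightarrow F$ is precisely the bookkeeping the paper elides.
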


\bibliographystyle{plain}

\end{document}